%% file: BKPsqpinvit.tex
\documentclass[a4paper,11pt,oneside]{amsart}
\usepackage[top=2.2cm,left=2.8cm,right=2.8cm,bottom=2.8cm]{geometry}
\usepackage[foot]{amsaddr}

\usepackage[table]{xcolor}
\usepackage{booktabs}
\usepackage{tabularx}
\usepackage{tabu}

\usepackage{graphicx}

\include{rwth-colors}
\usepackage{hyperref} %
\hypersetup{
    colorlinks,
    citecolor=grun,
    filecolor=black,
    linkcolor=rwth,
    urlcolor=blue
}
\hypersetup{linktocpage}

\usepackage{cite}

\usepackage[ruled]{algorithm2e} %

\SetCommentSty{mycommfont}

\usepackage{cleveref}
\crefname{equation}{}{}

\usepackage{amsbsy,bbm}
\usepackage{latexsym}
\usepackage{amsfonts}
\usepackage{amssymb}
\usepackage{amsmath,amsthm,stackengine}
\usepackage{enumerate}

\usepackage{tikz}
 \usetikzlibrary{arrows.meta}
\usetikzlibrary{arrows}
\usetikzlibrary{matrix}

\usepackage{mathdots}
\usepackage{mathtools}
\usepackage{mathdots}
\usepackage{fancybox}

\usepackage{scalerel}

\usepackage{stmaryrd}
\usepackage{dsfont}
\usepackage{bm}

\usepackage[scr=euler,scrscaled=1.00]{mathalfa}
\def\TODO{{\color{red}\mbox{TODO}}}

\usepackage{pgfplots}
\usepackage{import} %
\newif\ifuseprecompiled
\useprecompiledtrue

\usepackage{placeins} %

\newcommand{\tikzfigure}[4]%
 {%
 \begin{figure}[#1]%
  \begin{center}%
   \scalebox{0.75}{ %
    \ifuseprecompiled%
    \includegraphics{pdf_files/#3.pdf}%
    \else
    \tikzsetnextfilename{#3}
    \IfFileExists{tikz_base_files/#3.tikz}{
    \subimport{tikz_base_files/}{#3.tikz}
    }{\TODO{}}
    \fi
    }
  \end{center}%
  \vspace{-0.5cm}
  \caption{#4\label{fig:#3}}%
\end{figure}\noindent
}

\newcommand{\nolabeltikzfigure}[4]%
 {%
 \begin{figure}[#1]%
  \begin{center}%
   \scalebox{0.75}{ %
    \ifuseprecompiled%
    \includegraphics{pdf_files/#3.pdf}%
    \else
    \tikzsetnextfilename{#3}
    \IfFileExists{tikz_base_files/#3.tikz}{
    \subimport{tikz_base_files/}{#3.tikz}
    }{\TODO{}}
    \fi
    }
  \end{center}%
  \vspace{-0.5cm}
  \caption{#4}%
\end{figure}\noindent
}

\usetikzlibrary{arrows.meta}
\usetikzlibrary{backgrounds}
\usepgfplotslibrary{patchplots}
\usepgfplotslibrary{fillbetween}
\pgfplotsset{%
    layers/standard/.define layer set={%
        background,axis background,axis grid,axis ticks,axis lines,axis tick labels,pre main,main,axis descriptions,axis foreground%
    }{
        grid style={/pgfplots/on layer=axis grid},%
        tick style={/pgfplots/on layer=axis ticks},%
        axis line style={/pgfplots/on layer=axis lines},%
        label style={/pgfplots/on layer=axis descriptions},%
        legend style={/pgfplots/on layer=axis descriptions},%
        title style={/pgfplots/on layer=axis descriptions},%
        colorbar style={/pgfplots/on layer=axis descriptions},%
        ticklabel style={/pgfplots/on layer=axis tick labels},%
        axis background@ style={/pgfplots/on layer=axis background},%
        3d box foreground style={/pgfplots/on layer=axis foreground},%
    },
}
\pgfplotsset{compat=1.17} %

\pgfplotsset{
colormap={plots1}{rgb(0.00000000)=(1.00000000,1.00000000,1.00000000)
rgb(0.20000000)=(0.79215686,0.90588235,0.90588235)
rgb(0.40000000)=(0.67064087,0.79775559,0.84868949)
rgb(0.60000000)=(0.51423027,0.66829363,0.78644869)
rgb(0.80000000)=(0.25098039,0.49803922,0.71764706)
rgb(1.00000000)=(0.38039216,0.12941176,0.34509804)},
}

\usepackage[colorinlistoftodos, textwidth=23mm]{todonotes} %
\let\oldtodo\todo
\makeatletter
\newcommand\mytodo{%
\if@endpe\par\fi
\oldtodo}
\makeatother

\DeclareMathOperator{\arccot}{arccot}
\DeclareMathOperator{\rank}{rank}
\DeclareMathOperator{\ranks}{ranks}

\DeclareMathOperator{\Span}{span}

\DeclareMathOperator{\tr}{tr}

\newcommand\diag{\mathop{\rm diag}}
\newcommand\bidiag{\mathop{\rm bidiag}}

\newcommand\supp{\mathop{\rm supp}}

\providecommand{\abs}[1]{\lvert#1\rvert}

\providecommand{\biggabs}[1]{\biggl\lvert#1\biggr\rvert}

\providecommand{\norm}[1]{\lVert#1\rVert}

\newtheorem{theorem}{Theorem}
\newtheorem{lemma}[theorem]{Lemma}
\newtheorem{prop}[theorem]{Proposition}
\newtheorem{cor}[theorem]{Corollary}

\theoremstyle{definition}

\newtheorem{example}[theorem]{Example}

\theoremstyle{remark}
\newtheorem{remark}[theorem]{Remark}

\crefname{algorithm}{Algorithm}{Algorithms}
\crefname{remark}{Remark}{Remarks}
\crefname{lemma}{Lemma}{Lemmas}
\crefname{prop}{Proposition}{Propositions}
\crefname{theorem}{Theorem}{Theorems}
\crefname{cor}{Corollary}{Corollaries}
\crefname{section}{Section}{Sections}
\crefname{subsection}{Section}{Sections}
\crefname{subsubsection}{Section}{Sections}
\crefname{appendix}{Appendix}{Appendices}
\crefname{figure}{Figure}{Figures}

\newcommand{\cK}{{\mathcal{K}}}

\newcommand{\cF}{{\mathcal{F}}}

\newcommand{\A}{\ten A}
\newcommand{\E}{\ten E}

\newcommand{\sdd}{\,\mathrm{d}}

\newcommand{\Chi}{\raise .3ex
\hbox{\large $\chi$}}

\newcommand{\R}{\mathbb{R}}
\newcommand{\N}{\mathbb{N}}

\newcommand{\ten}[1]{\boldsymbol{#1}}
\newcommand{\rep}[1]{\mathsf{#1}}

\newcommand{\sst}[1]{{\scaleto{#1}{4.5pt}}}

\newcommand{\rmap}[1]{\tau(#1)}

\newcommand{\rmapless}[3]{\tau^\sst{<}_{#1,#2} (#3)}

\newcommand{\rmapgtr}[3]{\tau^\sst{>}_{#1,#2} (#3)}

\DeclareMathOperator*{\SKP}{\Join}

\newcommand{\unocc}[1]{\colorbox{gray!30}{\makebox(8,8){$\mathsf{#1}$}}}
\newcommand{\occ}[1]{\colorbox{black}{\textcolor{white}{\makebox(8,8){$\mathsf{#1}$}}}}
\newcommand{\neut}[2]{
\text{\raisebox{-.2cm}{
\begin{tikzpicture}
\pgfmathsetmacro{\cubex}{.48}
\pgfmathsetmacro{\cubey}{.48}
\draw[black,fill=white] (0,0) -- ++(-\cubex,0,0) -- ++(0,-\cubey,0) -- ++(\cubex,0,0) -- cycle;
\node at (-.22,-.25) {$\mathsf{#1_#2}$};
\end{tikzpicture}
}}}
\newcommand{\neutlift}[1]{
\text{\raisebox{-.2cm}{
\begin{tikzpicture}
\pgfmathsetmacro{\cubex}{.45}
\pgfmathsetmacro{\cubey}{.45}
\pgfmathsetmacro{\cubez}{.2}
\draw[black,fill=white] (0,0,0) -- ++(-\cubex,0,0) -- ++(0,-\cubey,0) -- ++(\cubex,0,0) -- cycle;
\draw[black,fill=gray!30] (0,0,0) -- ++(0,0,-\cubez) -- ++(0,-\cubey,0) -- ++(0,0,\cubez) -- cycle;
\draw[black,fill=gray!30] (0,0,0) -- ++(-\cubex,0,0) -- ++(0,0,-\cubez) -- ++(\cubex,0,0) -- cycle;
\node at (-.37,-.37,-.37) {$\mathsf{#1}$};
\end{tikzpicture}
}}}

\newcommand{\enum}{\ensuremath{\varepsilon_{\mathrm{num}}}}
\newcommand{\eiter}{\ensuremath{\varepsilon_{\mathrm{iter}}}}
\newcommand{\PINVIT}{\ensuremath{\operatorname{PINVIT}}}
\newcommand{\trp}{\top}
\newcommand{\etain}{\eta_{\mathrm{inner}}}
\newcommand{\etares}{\eta_{\mathrm{res}}}
\newcommand{\etaout}{\eta_{\mathrm{outer}}}
\newcommand{\teig}{t_{\mathrm{eig}}}
\newcommand{\zetares}{\zeta_{\mathrm{res}}}
\newcommand{\bfun}{b}

\input{numbers/exp11_KlowD1_data.txt}
\input{numbers/exp16_KevenhigherD1_data.txt}
\input{numbers/exp41_KlowD4_data.txt}
\input{numbers/exp46_KevenhigherD4_data.txt}

\SetArgSty{textup}

\numberwithin{equation}{section}
\numberwithin{theorem}{section}

\title[Low-Rank Eigenvalue Solvers for Block-Sparse Matrix Product States]{Low-Rank Eigenvalue Solvers for Block-Sparse Matrix Product States}
\author{Markus Bachmayr$^1$}
\address{\rm $^1$ Institut f\"ur Geometrie und Praktische Mathematik, RWTH Aachen University, Templergraben 55, 52062 Aachen, Germany.}
\email[Markus Bachmayr]{bachmayr@igpm.rwth-aachen.de}
\author{Sebastian Kr\"amer$^1$}
\email[Sebastian Kr\"amer]{kraemer@igpm.rwth-aachen.de}
\author{Max Pfeffer$^2$}
\address{\rm $^2$ Institut f\"ur Numerische und Angewandte Mathematik, Georg-August Universität G\"ottingen, Wilhelmsplatz 1, 37073 G\"ottingen, Germany.}
\email[Max Pfeffer]{m.pfeffer@math.uni-goettingen.de}
\thanks{M.B.\ acknowledges funding by
 Deutsche Forschungsgemeinschaft (DFG, German Research Foundation) -- Projektnummern 233630050, 211504053 -- TRR 146, SFB 1060, and by the European Union (ERC, COCOA, 101170147). 
M.P. was partially funded by DFG -- Projektnummer 448293816.}

\date{\today}

\begin{document}

\maketitle

\vspace{-18pt}
\begin{abstract}
We consider an iterative eigensolver for Schr\"odinger equations that constructs low-rank approximations of eigenfunctions with accuracy-adapted ranks, with particular focus on fermionic Schr\"odinger equations in second-quantized form and on matrix product state approximations enforcing particle number conservation.
We provide a complete analysis of a solver based on preconditioned inverse iteration combined with rank truncation and propose a generalization to subspace iteration for the joint approximation of several eigenspaces.
The practical performance of the method is illustrated by numerical tests for several model problems.
 
 \smallskip
\noindent \emph{Keywords.} second quantization, particle number conservation, matrix product states, preconditioned inverse iteration, subspace iteration

\smallskip
\noindent \emph{Mathematics Subject Classification.} {15A69, 65F15, 65Y20, 65Z05}
\end{abstract}

\section{Introduction}

In this paper, we consider eigenvalue solvers operating with convergence guarantees  on low-rank tensor representations, with particular focus on Hamiltonians associated to fermionic quantum systems in second-quantized representation. 
The fermionic nature of the particles entails antisymmetry of the sought eigenfunctions under exchange of particle coordinates. Such functions are commonly approximated by antisymmetrized tensor products of single-particle basis functions, which in this context are called \emph{orbitals}.
Elementary antisymmetrized tensor products, called \emph{Slater determinants}, are determined by selections of distinct orbitals. 

In second quantization, linear combinations of Slater determinants are parameterized by \emph{occupation numbers:} each selection of orbitals corresponds to a set of binary indices into a tensor holding the coefficients of the linear combination.
In many cases of interest, these high-order occupation number tensors are observed to have efficient low-rank approximations in terms of matrix product states (MPS, also known as tensor trains) or more general tree tensor networks. 
Constrainting the problem to a fixed particle number leads to certain block-sparse structures in the components of matrix product states.
 
The eigensolvers in low-rank format that we consider here are geared towards the particular setting of second-quantized formulations and block sparsity with particular attention to structure of operator representations, but the basic elements of the analysis given here may be of independent interest.
We obtain a method that is guaranteed to converge in terms of the $H^1$-norm angle with respect to the true eigenspace. At the same time, the achieved error bounds are balanced with the ranks of MPS approximations, where we obtain a near-optimal relation to ranks of best approximations of comparable accuracy.
This is achieved by a combination of error reduction based on preconditioned inverse iteration with adaptive rank truncation.

\subsection{Eigenvalue problems for Hamiltonians in second quantization}

Our considerations are motivated by applications in quantum chemistry, where the central object of study are eigenvalue problems of the electronic Schr\"odinger Hamiltonian, which for a system with $N$ electrons takes the form
\begin{equation}\label{eq:hamilt}
   H = - \frac12 \Delta + \sum_{i=1}^N V(x_i) + \frac12 \sum_{i \neq j} \frac{1}{\norm{x_i - x_j}_2}
\end{equation}
with electron coordinates $x_1, \ldots, x_N \in \R^3$,
where $V$ is an external potential, such as a Coulomb potential corresponding to the electrostatic attraction of atomic nuclei.
We make the standard assumption (which holds true in the absence of magnetic fields) of real-valued $V$, which implies that the sought eigenvalues and eigenfunctions are also real-valued.

Due to the fermionic nature of electrons, the physically relevant eigenfunctions, called wave functions, additionally need to satisfy certain antisymmetry conditions. There we need to take into account the additional spin degree of freedom of each particle, which can take the values $\pm \frac12$; the complete wave functions are determined by their spin components that are functions on $\R^{3N}$. Each such spin component $u$ needs to satisfy the antisymmetry property
\[
   u(\ldots, x_i , \ldots, x_j, \ldots) = - u(\ldots, x_j, \ldots, x_i, \ldots)
\]
whenever $i \neq j$ correspond to particles of equal spin.
For simplicity, in what follows we will always assume spin configurations where all spins are equal, so that the sought eigenfunctions are antisymmetric under exchange of any pair of particle coordinates in $\R^3$.
With this restriction, the main aim is then to find $u \in H^1(\R^{3N}) \cap \bigl(\bigwedge_{i=1}^N L^2(\R^3)\bigr) \subset H^1(\R^{3N})$, $u \neq 0$, such that $H u = \lambda u$ with minimal $\lambda \in \R$.

We assume a family of orbitals $\phi_i \in H^1(\R^3)$, $i \in \N$, that are an orthonormal basis of $L^2(\R^3)$. The Slater determinants $\phi_{i_1} \wedge \phi_{i_2} \wedge \ldots \wedge \phi_{i_N}$ for $i_1 < i_2 < \ldots < i_N$ are then an orthonormal basis of $\bigwedge_{i=1}^N L^2(\R^3)$.
In what follows, we consider approximations using the $K$ orbitals $\{ \phi_{1},\ldots, \phi_{K} \}$.
Any linear combination of Slater determinants formed from these $K$ orbitals, that is,
\[
   \sum_{1 \leq i_1 < \ldots < i_N \leq K } c_i\, \phi_{i_1} \wedge  \ldots \varphi_{i_N}
\]
with $c_{i} \in \R$, can be represented as a tensor of occupation numbers $\ten x \in \mathcal{F}^K$, where
\[  \mathcal{F}^K = (\R^2)^{\otimes K}  \simeq \R^{(2^K)}.  \]
This is achieved by identifying $i \in \{ 1, \ldots, K\}^N$, $i_1 < \ldots < i_N$, with precisely one $\alpha \in \{ 0, 1 \}^K$ by $\alpha_k = 1$ if $k \in \{ i_1, \ldots, i_N \}$, and $\alpha_k= 0$ otherwise for $k=1,\ldots,K$, and defining the nonzero entries of $\ten x$ by $\ten x_\alpha = c_i$ for such $\alpha$. 

We have the corresponding second-quantized representation of the Hamiltonian acting on occupation number tensors,
\begin{equation}\label{eq:hamil}
    \ten{H} = \ten T + \ten V = \sum_{i,j = 1}^K t_{ij} \ten{a}_{i}^\ast \ten{a}_{j}  +  \sum_{i,j,k,l = 1}^K v_{ijkl} \ten{a}^\ast_{i} \ten{a}^\ast_{j} \ten{a}_{k} \ten{a}_{l},
\end{equation}
 in terms of the \emph{annihilation operator} $\ten{a}_i$
\begin{equation}\label{def:anniloperator}
  \ten{a}_i =  \biggl( \bigotimes_{k=1}^{i-1} S \biggr) \otimes A \otimes \biggl( \bigotimes_{k =  i + 1}^K I \biggr)
\end{equation}
and the corresponding \emph{creation operator} $\ten{a}_i^\ast$. These are given in terms of the elementary components
 \begin{equation}\label{eq:elementarycomponents}
  S = \begin{pmatrix} 1 & 0 \\ 0 & -1 \end{pmatrix}, \quad
   A = \begin{pmatrix} 0 & 1 \\ 0 & 0 \end{pmatrix}, \quad 
   I = \begin{pmatrix} 1 & 0 \\ 0 & 1 \end{pmatrix}.
\end{equation}
The operators $\ten{a}^\ast_i$ and $\ten{a}_i$ can be thought of as transferring particles from the unoccupied to the occupied state of orbital $i$ or back, respectively. The antisymmetry of wavefunctions corresponds to the anticommutation relations
 \begin{equation}\label{eq:anticomm}
     \ten{a}_{i} \ten{a}^\ast_{j} + \ten{a}^\ast_{j} \ten{a}_{i} = \delta_{ij}, \quad   \ten{a}^\ast_{i} \ten{a}^\ast_{j} + \ten{a}^\ast_{j} \ten{a}^\ast_{i} =   \ten{a}_{i} \ten{a}_{j} + \ten{a}_{j} \ten{a}_{i} = 0\,.
 \end{equation}

 Neglecting electron spin for the moment, the coefficient tensors in \eqref{eq:hamil} are given by
 \begin{equation}\label{eq:tvdef}
 \begin{aligned}
   t_{ij} &=  \frac12  \int_{\R^3}  \nabla\phi_i \cdot\nabla \phi_j \sdd x + \int_{\R^3} V \phi_i \phi_j \sdd x, \\ v_{ijkl} &= \frac12 \int_{\R^3} \int_{\R^3}  \phi_i(x_1) \phi_j(x_2) \frac{1}{\norm{x_1 - x_2}_2} \phi_k(x_2) \phi_l(x_1) \sdd x_1 \sdd x_2\,.
   \end{aligned}
 \end{equation}
 Although in this paper, we test our methods on model problems with particles living in a one-dimensional space, they can be generalized immediately to the above general case.
 
A common strategy for handling large $K$ is to approximate elements of $\mathcal{F}^K$ in low-rank tensor formats. One instance are MPS \cite{white,vidal03}, which especially in the mathematical literature are also known as the tensor train (TT) format \cite{Oseledets:2011:TT}, which in turn are a variant of hierarchical tensors \cite{Hackbusch:09,Hackbusch:12tensorspaces}.  
Using tensor formats in this manner has the advantage that the occupation number tensors do not need to satisfy antisymmetry requirements, which would be difficult to incorporate directly \cite{Hackbusch:18}.

However, the second-quantized representation \eqref{eq:hamil} of the Hamiltonian is valid for any number of particles, and thus this number generally needs to be explicitly enforced.
Prescribing a number of $N$ particles amounts to restricting the eigenvalue problem for $\ten{H}$ to the subspace $\mathcal{F}_N^K$ of those occupation numbers that are also eigenvectors of the
\emph{particle number operator}
 \[
  \ten{P} = \sum_{i=1}^K \ten{a}_i^\ast \ten{a}_i
\]
with eigenvalue $N$. The particle number constraint does not need to be implemented explicitly: as considered in detail in \cite{BGP:22}, every particle number eigenspace corresponds to a certain \emph{block-sparse structure} in the cores of MPS. This fact has a long history in the physical literature (see, e.g., \cite{OR95,Daley:04,McCulloch:07,Schollwoeck:11,SPV11}). The block sparsity is associated to gauge symmetries of the Hamiltonian, such as $U(1)$ symmetry corresponding to particle number conservation.

An approximate solution for the lowest eigenvalue (or eigenvalues)  with $N$ particles of the Hamiltonian $\ten H$ is then sought within the subspace $\mathcal{F}_N^K \subset \mathcal{F}^K$ of $N$-particle states,
\[
 \mathcal{F}_N^K = \bigl\{ \ten{x} \in \mathcal{F}^K\colon   \ten P\ten x = N \ten x \bigr\} ,
\]
which leads to the eigenvalue problem
\begin{equation}\label{eq:Hevp}
 \langle \ten H \ten x, \ten y \rangle = \lambda \langle \ten x, \ten y \rangle \quad\text{for all $\ten y \in \mathcal{F}_N^K$}
\end{equation}
for $\ten x \in \mathcal{F}_N^K$, $\ten x \neq 0$. Here and in what follows, we use $\langle \cdot , \cdot \rangle$ to denote the standard Euclidean inner product and $\norm{\cdot}$ for the induced vector norm as well as the corresponding matrix norm.

\subsection{Novelty and relation to previous work}

We build on \cite{BGP:22} in using the block-sparsity pattern of MPS representations of element of $\mathcal{F}_N^K$. In our present setting, all involved operations preserve the particle number and thus again yield tensors that have a representation with the same block structure. Thus eigenvalue solvers can be implemented to operate only on the separate blocks in the tensor representation, which may reduce the computational costs. 

The most commonly used numerical method for solving \eqref{eq:Hevp} with $\ten x$ in MPS representation is the \emph{density matrix renormalization group} (DMRG) algorithm \cite{white,vidal03}. Although this alternating minimization method is often observed to perform well in practice, it can fail to converge. This can be seen from simple counterexamples for low-order tensors as, for example, in \cite[Ex.~5.12]{Pfeffer:18}, but is also observed in larger-scale problems due to effects that have to do with the global structure of eigenvectors, as noted in \cite{Dolfietal:12}. Beyond these basic difficulties, as outlined in \cite{BGP:22}, the restriction to the $N$-particle subspace $\mathcal{F}_N^K$ may impact the convergence of eigensolvers. In particular, the choice of rank parameters in the respective starting values can have a crucial influence.

While DMRG is an instance of a method operating on separate components in a tensor representation, one can also construct methods that approximately perform steps of a solver (such as a Krylov subspace method) on vectors represented in a low-rank tensor format. When the error tolerances for rank truncation are sufficiently small, the convergence properties of the underlying schemes can be preserved. As in the case of DMRG, however, one faces the problem of controlling the resulting ranks of approximations. 
In methods that operate on manifolds of tensors of fixed ranks, such as Riemannian optimization methods, conversely one faces the problem that it is difficult to ensure convergence.

Our central aim in this work is the construction of an eigenvalue solver operating on matrix product states that interacts well with the block structures associated to fixed particle numbers, that is ensured to converge (at least for good enough initial guesses) and comes with bounds on the generated ranks of approximations.

The strategy that we follow here for balancing approximation errors and tensor ranks is based on the one developed for linear elliptic operator equations in \cite{BachmayrDahmen:15,BachmayrDahmen:16}, as presented with some adjustments in the review article \cite{B23}. 
While in these works, the target error measure is the total norm error with respect to the exact solution of a PDE, in the present work some major adaptations are required, since we aim for controlling angles between approximate and exact eigenspaces.
For this reason, we do not consider here another strategy for controlling ranks based on soft thresholding of tensors as in \cite{BachmayrSchneider:17}, which is more strongly tied to norm errors.

We achieve control of approximation ranks by an interplay of error reduction, by an iterative solver providing a guaranteed improvement of approximation errors, with an approximation complexity reduction, by tensor rank truncation up to carefully chosen tolerances. Such combinations of iterative schemes with rank truncation have been considered previously for eigenvalue problems, for example, in \cite{Kressner:11,Dolgov:14,KSU:14}, but without bounds on the tensor ranks in terms of approximation errors. 

As our basic iterative solver, we use \emph{preconditioned inverse iteration} (PINVIT). This method is well suited for implementation in low-rank format and for the combination with preconditioning in our particular setting, but in principle it is not the only possible choice. The crucial point, however, is that for suitable starting values, it ensures a reduction of the eigenspace approximation error.
In this regard, we draw on the classical results on PINVIT for matrix eigenvalue problems by Knyazev and Neymeyr \cite{KN:03,KN:09} and on their adaptation to problems on function spaces with approximate residual evaluation \cite{Rohwedder:11}. 

\subsection{Outline}

We recall basic notation for tensors in MPS format and the relevant results of \cite{BGP:22} in Section \ref{sec:prelim}.
In Section \ref{sec:overview}, we give a summary of our main results that are shown in the subsequent sections.
Section \ref{sec:precinsc} provides an analysis of the low-rank preconditioner that we subsequently employ in our method.
In Section \ref{sec:conv}, we give a convergence analysis of inexact preconditioned inverse iteration for one-dimensional eigenspaces, which may be of independent interest in its sharpening of results of \cite{Rohwedder:11}.
We describe a generalization to a subspace iteration for simultaneous approximation of several eigenspaces in Section \ref{sec:multiple}.
In Section \ref{sec:numer}, we give results of numerical experiments for model systems.

\section{Preliminaries}\label{sec:prelim}

In our notation, we follow \cite{kazeev_low-rank_2012,BK:20} with some adaptations. We briefly summarize the findings of \cite{BGP:22}, where a more detailed introduction and discussion can be found.

\subsection{Matrix product states and operators}\label{sec:MPS}

The \emph{matrix product state} (or \emph{tensor train}) representation of $\ten{x} \in \cF^K$ reads
\begin{equation} \label{eq:mps}
\ten{x}_\alpha = \ten{x}_{\alpha_1, \ldots, \alpha_K} =
		\sum_{j_1=1}^{r_1}
		\cdots
		\sum_{j_{K-1}=1}^{r_{K-1}}
		X_1(j_0, \alpha_1, j_1)
		X_2(j_1, \alpha_2, j_2)
		\, \cdots \,
		X_K(j_{K-1}, \alpha_K, j_K)
\end{equation}
for $\alpha \in \{0,1\}^K$, where for notational reasons we set $j_0 = j_K = 1$, $r_0 = r_K = 1$.

For the third-order component tensors $X_k$ in such a representation, called \emph{cores}, we write
\begin{equation}\label{eq:cores}
 \rep{X} = (X_1,\ldots, X_K),\quad    X_k = \bigl(  X_k(j_{k-1}, \alpha_k, j_k)  \bigr)_{\substack{j_{k-1}  = 1,\ldots, r_{k-1}, \\ \alpha_k = 0,1,  \\ j_k = 1,\ldots,r_k}} \,.
\end{equation}
For specifying the components of an MPS explicitly, we use the notation
\begin{equation*}\label{eq:corenotation1}
   X^{ [j_{k-1}, j_k] }_k = \bigl( X_k(j_{k-1}, \alpha_k, j_k) \bigr)_{\alpha_k=0,1},    
\end{equation*}
In terms of the vectors $X_k^{[j_{k-1}, j_k]} \in \R^{\{0,1\}}$, a core $X_k$ is then given by the rankwise block representation
\begin{equation}\label{eq:corenotation2}
   X_k = \begin{bmatrix} X_k^{[1,1]} & \cdots &  X_k^{[1,r_k]}  \\ \vdots & \ddots & \vdots \\ X_k^{[r_{k-1},1]} & \cdots & X_k^{[r_{k-1},r_k]}  \end{bmatrix}.
\end{equation}
Complementing this, we also introduce the matrices
\begin{equation*}
   X^{ \{ \alpha_k \} }_k  = \bigl( X_k(j_{k-1}, \alpha_k, j_k) \bigr)_{j_{k-1}=1,\ldots,r_{k-1}}^{j_k=1,\ldots,r_k} \, ,
\end{equation*}
here and in what follows, we write row indices as subscripts and column indices as superscripts.

For a compact way of writing \eqref{eq:mps} in terms of the cores \eqref{eq:cores}, we use the \emph{strong Kronecker product}, 
\[
   ( X_1 \SKP X_2 )^{ \{ \alpha_1 \alpha_2 \} } = X_1^{ \{ \alpha_1 \}  }  X_2^{ \{ \alpha_2 \}  }.
\]
For example, for two cores $X, Y$ of ranks $2\times 2$, we obtain
\[
\begin{multlined}
  	\begin{bmatrix}
	X^{[1,1]} & X^{[1,2]}\\
	X^{[2,1]} & X^{[2,2]}\\
	\end{bmatrix}
	\SKP
	\begin{bmatrix}
	Y^{[1,1]} & Y^{[1,2]}\\
	Y^{[2,1]} & Y^{[2,2]}\\
	\end{bmatrix}  \qquad \\
 \qquad	=
	\begin{bmatrix}
	X^{[1,1]} \otimes Y^{[1,1]} + X^{[1,2]} \otimes Y^{[2,1]} & X^{[1,1]} \otimes Y^{[1,2]} + X^{[1,2]} \otimes Y^{[2,2]}\\
	X^{[2,1]} \otimes Y^{[1,1]} + X^{[2,2]} \otimes Y^{[2,1]} & X^{[2,1]} \otimes Y^{[1,2]} + X^{[2,2]} \otimes Y^{[2,2]}\\
	\end{bmatrix}.
	\end{multlined}
\]
With this notation, we have
\[
     [\ten{x}] = X_1 \SKP X_2 \SKP \cdots \SKP X_K\,,
\]
where the block $[\ten{x}] \in \R^{ 1 \times \{0,1\}^K \times 1}$ has leading and trailing dimensions of mode size 1.
For simplicity, we ignore such singleton dimensions, that is,
we identify $[\ten{x}]$ with the tensor $\ten{x} \in \R^{\{0,1\}^K}$ of order $K$.

\subsection{Singular value decomposition}
For $k=1,\ldots,K-1$, using $(\alpha_1,\ldots,\alpha_k)$ as row index and $(\alpha_{k+1},\ldots, \alpha_K)$ as column index, one obtains the $k$-th \emph{matricization} (or \emph{unfolding}) of a tensor $\ten x \in \cF^K$. Any representation $\rep X$ of $\ten x$ can be transformed by operations on its cores such that these matricizations are in variants of an SVD form, which also yield the singular values $\sigma_{k,j}$ for $k = 1,\ldots,K-1, j = 1,\ldots,r_k$. The \emph{Vidal decomposition} \cite{vidal03} is an instance of such a representation. A very similar form results immediately from the \emph{tensor train SVD (TT-SVD)} algorithm \cite{Oseledets:2011:TT}, which in turn can be regarded as a special case of the \emph{hierarchical SVD} \cite{Grasedyck:2010:HierarchicalSVD} of more general tree tensor networks.

We write $\rank_i(\ten x)$ for the rank of the $i$-th matricization of $\ten x$ and
\[  \ranks(\ten x) = \bigl( \rank_1(\ten x), \ldots, \rank_{K-1} (\ten x) \bigr)  \in \N_0^{K-1} \]
for the vector of these ranks. These correspond to the entrywise minimal tupel $(r_1,\ldots,r_{K-1})$ of any representation $\rep X$ of $\ten x$ and they are therefore called {\em ranks} of the tensor $\ten x$.
The rank truncation of each matricization yields quasi-optimal approximations of lower ranks \cite{Oseledets:2011:TT,Grasedyck:2010:HierarchicalSVD}: let $\ten x$ be given with ranks $r_1,\ldots,r_{K-1}$, and denote by $\operatorname{trunc}_{s_1,\ldots,s_{K-1}}(\ten x)$ its truncation to ranks at most $s_k\leq r_k$, $k=1,\ldots,K-1$, then
\begin{equation}\label{eq:ttsvdquasiopt}
\begin{multlined}
  \norm{ \ten x - \operatorname{trunc}_{s_1,\ldots,s_{K-1}}(\ten x)}
   \leq \biggl(\sum_{k=1}^{K-1} \sum_{j=s_k+1}^{r_k} \sigma_{k,j}^2 \biggr)^{\frac12} \\
  \qquad\qquad\qquad \leq \sqrt{K-1} \min\bigl\{ \norm{\ten x - \ten y} \colon \ten y \text{ of ranks at most $s_1,\ldots, s_{K-1}$} \bigr\}.
   \end{multlined}
\end{equation}
Using the above error bound in terms of the matricization singular values, one obtains an approximation $\operatorname{trunc}_\varepsilon(\ten x)$ with $\norm{\ten x - \operatorname{trunc}_\varepsilon(\ten x)}_2 \leq \varepsilon$ for any $\varepsilon>0$ by truncating ranks according to the smallest singular values.

\subsection{Block structure of matrix product states}\label{sec:blockstructure}

We represent the tensors with fixed particle number $N \leq K$, $\ten x \in \cF^K_N$ in block-sparse MPS format, see~\cite{BGP:22} for more details. The matrices $X^{\{0\}}_k$ and $X^{\{1\}}_k$ are either block-diagonal or they have blocks only just above or just below the diagonal. We denote the blocks representing an unoccupied $k$-th orbital by 
\begin{equation*}
\unocc{X}_{k,n} \in \R^{\rho_{k-1,n} \times \rho_{k,n}} \qquad \text{for $n \in \cK_{k-1}\cap \cK_{k}$},
\end{equation*}
and those representing an occupied orbital by 
\begin{equation*}
\occ{X}_{k,n}  \in \R^{\rho_{k-1,n} \times \rho_{k,n+1}} \qquad \text{for $n \in \cK_{k-1} \cap (\cK_{k}-1)$},
\end{equation*}
where $\cK_k := \bigl\{ \max\{ 0 , N - K + k \} ,\ldots,  \min\{N, k\} \bigr\}$.
Note that for the block sizes, it holds $\sum_{n \in \cK_k} \rho_{k,n} = r_k$ and $\rho_{0,0} = \rho_{K,N} = 1$.

For $k$ such that $N < k < K-N+1$, which we refer to as the \emph{generic case}, we have $\cK_{k-1} = \cK_k = \{0,\ldots,N\}$; otherwise, the number of particles to the right and to the left of orbital $k$, and hence the elements of $\cK_{k-1}$ and $\cK_k$, are restricted. The corresponding block structure  has the form
\begin{equation}\label{eq:blockstructure}
\setlength\arraycolsep{3pt}
X^{\{0\}}_k = \begin{pmatrix}
\unocc{X}_{k,0} & 0 & \cdots & 0 \\
0 & \unocc{X}_{k,1} & \cdots & 0 \\
\vdots & \vdots & \ddots & \vdots \\
0 & 0 & \cdots & \unocc{X}_{k,N}
\end{pmatrix}	
\qquad \text{and} \qquad
X^{\{1\}}_k = \begin{pmatrix}
0 & \occ{X}_{k,0} & \cdots & 0 \\
\vdots & \vdots & \ddots & \vdots \\
0 & 0 & \cdots & \occ{X}_{k,N-1} \\
0 & 0 & \cdots & 0 
\end{pmatrix}.	
\end{equation}
Since nonzero blocks for the unoccupied orbital never occur in the same position as the ones for the occupied orbital, the two layers $\alpha = 0$ and $\alpha = 1$ can be summarized in the core representation
\begin{equation*}
\setlength\arraycolsep{3pt}
X_k = \begin{bmatrix}
\unocc{X}_{k,0}^{\uparrow} & \occ{X}_{k,0}^{\uparrow} & \cdots & 0 \\[-3pt] 
0 & \unocc{X}_{k,1}^{\uparrow} & \ddots & \vdots \\[-2pt]
\vdots & \vdots & \ddots & \hspace{4pt} \occ{X}_{k,N-1}^{\uparrow} \\[3pt]
0 & 0 & \cdots & \hspace{-7pt}\unocc{X}_{k,N}^{\uparrow}
\end{bmatrix} =: \bidiag\left(\begin{bmatrix} \unocc{X}_{k,n}^{\uparrow} & \occ{X}_{k,n}^{\uparrow} \end{bmatrix}_{n \in \cK_{k-1}} \right),
\end{equation*}
where each block is composed of vectors, and where we define the lift
\begin{equation*}
\unocc{X}_{k,n}^{\uparrow} = \unocc{X}_{k,n} \uparrow e^0 \in \R^{\rho_{k-1,n} \times 2 \times \rho_{k,n}} \qquad \text{for $n \in \cK_{k-1}\cap \cK_{k}$}
\end{equation*}
and
\begin{equation*}
\occ{X}_{k,n}^{\uparrow} = \occ{X}_{k,n} \uparrow e^1 \in \R^{\rho_{k-1,n} \times 2 \times \rho_{k,n+1}} \qquad \text{for $n \in \cK_{k-1} \cap (\cK_{k}-1)$}.
\end{equation*}
Here, the \emph{lift product} $\uparrow$ is to be understood as a Kronecker product with reordered indices.
The cases where either $k \leq N$ or $k \geq K-N+1$ have the last rows or first columns (and zero rows and columns) removed, respectively, as illustrated in the following example.
\begin{example}
A tensor $\ten x \in \cF^5_2$ of order $K=5$ and particle number $N=2$, representing $5$ orbitals and $2$ particles, has the form\small
\begin{equation*}
\setlength\arraycolsep{3pt}
\ten x = \begin{bmatrix}
\unocc{X}_{1,0}^{\uparrow} & \occ{X}_{1,0}^{\uparrow}
\end{bmatrix}
\SKP
\begin{bmatrix}
\unocc{X}_{2,0}^{\uparrow} & \occ{X}_{2,0}^{\uparrow} & 0 \\[3pt]
0 & \unocc{X}_{2,1}^{\uparrow} & \occ{X}_{2,1}^{\uparrow}
\end{bmatrix}
\SKP
\begin{bmatrix}
\unocc{X}_{3,0}^{\uparrow} & \occ{X}_{3,0}^{\uparrow} & 0 \\[3pt]
0 & \unocc{X}_{3,1}^{\uparrow} & \occ{X}_{3,1}^{\uparrow} \\[3pt]
0 & 0 & \unocc{X}_{3,2}^{\uparrow} \\
\end{bmatrix}
\SKP
\begin{bmatrix}
\occ{X}_{4,0}^{\uparrow} & 0 \\[3pt]
\unocc{X}_{4,1}^{\uparrow} & \occ{X}_{4,1}^{\uparrow} \\[3pt]
0 & \unocc{X}_{4,2}^{\uparrow}
\end{bmatrix}
\SKP
\begin{bmatrix}
\occ{X}_{5,1}^{\uparrow} \\[3pt]
\unocc{X}_{5,2}^{\uparrow} 
\end{bmatrix}.
\end{equation*}\normalsize
\end{example}

As an eigenspace of a linear operator, $\cF^K_N$ is a linear subspace of $\cF^K$. Addition and scalar multiplication of MPS in this subspace correspondingly  work equivalently to those of regular MPS: Addition of two tensors in block-sparse MPS format is the concatenation of corresponding blocks, scalar multiplication is the multiplication of all blocks in one of its components. Furthermore, left- and right-orthogonalization as well as rank truncation procedures can be performed explicitly in this format, cf.~\cite[Section 4]{BGP:22}.

\subsection{Operator representations}

All operators used in this article are of the form~\eqref{eq:hamil}: They are the scaled sum of {\em one-particle interactions} $\ten{a}_{i}^\ast \ten{a}_j$, described by the one-particle operator $\ten T$, and {\em two-particle interactions} $\ten{a}_{i}^\ast \ten{a}_{j}^\ast \ten{a}_k \ten{a}_\ell$, stored in the two-particle operator $\ten V$. As such, they are also particle-number preserving and thus map $\cF^K_N$ to itself. 

These operators can be cast in a \emph{matrix product operator} (MPO) representation that is compatible with MPS representations of vectors. 
The corresponding MPO rank is an upper bound for the factor by which the action of the operator can increase MPS ranks.
The following rank bounds for $\ten T$ and $\ten V$ are shown in~\cite[Theorems~5.4,~5.5]{BGP:22} (see also \cite{dolgov2013two,kazeev2013low} and \cite{crosswhite2008finite,keller2015efficient}).

\begin{theorem}\label{thm:opranks}
For $\ten T$, $\ten V$ as in \eqref{eq:hamil}, the following rank bounds hold true:
\begin{enumerate}[{\rm\bf(i)}]
\item $\ten{T}$ has an MPO representation of ranks bounded by $K+2$, and $\ten V$ has an MPO representation of ranks bounded by $\frac12 {K^2}+ \frac32 {K}+2$.
\item If for some $d \in \N_0$, we have $ t_{ij} = 0$ whenever $\abs{i-j} > d$, then the MPO ranks of $\ten T$ are bounded by $2d+2$,
and if there exists $\tilde d\in\N_0$ such that $v_{i j k l} = 0$ whenever  \begin{equation*}
 \max\{ \abs{i-j},\abs{i-k},\abs{i-l},\abs{j-k},\abs{j-l},\abs{k-l}  \} > \tilde d,\end{equation*} 
 then the MPO ranks of $\ten{V}$ are bounded by $\tilde d^2+3\tilde d-1$ if $\tilde d$ is odd and by $\tilde d^2+3\tilde d-2$ if $\tilde d$ is even.
\end{enumerate}
\end{theorem}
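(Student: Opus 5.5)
The plan is to build explicit MPO representations by the finite-state-automaton construction, with block operator cores in the same strong Kronecker product notation used for MPS.

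\textbf{Construction of $\ten T$.} Each term $\ten a_i^\ast \ten a_j$ is an elementary tensor product of the components in \eqref{eq:elementarycomponents}. For $i<j$ it reads $I\otimes\cdots\otimes I\otimes A^\ast S\otimes S\otimes\cdots\otimes S\otimes A\otimes I\otimes\cdots\otimes I$, and analogously for $i>j$ and for $i=j$ with the factor $A^\ast A$. The cores are upper block-triangular with the following states on bond $k$:
\begin{itemize}
\item the ``nothing placed yet'' state (identity propagation);
\item the ``complete'' state (identity propagation);
\item a single state collecting $\sum_{i\le k} t_{ij}\,\ten a_i^\ast$-type partial strings still waiting for an annihilator at position $j>k$.
\end{itemize}
The partial strings in the third item are labelled by the open index $j>k$, or equivalently by $i\le k$. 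Choosing the cheaper side, the number of open channels on bond $k$ is $\min\{k,K-k\}$ for each of the two orderings (creator first or annihilator first). With the two trivial states this gives ranks $2\min\{k,K-k\}+2\le K+2$. In the banded case only indices with $|i-j|\le d$ are open across a bond, so each ordering needs at most $d$ channels, which gives $2d+2$.

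\textbf{Construction of $\ten V$.} Using the anticommutation relations \eqref{eq:anticomm}, I first reorder each quartic term so that its four indices are sorted. I then group terms by how many operators sit left of bond $k$:
\begin{itemize}
\item $0$ or $4$ operators left: the two identity states;
\item $1$ or $3$ operators left: one open index on one side, contracted against coefficients on the other, so about $2\min\{k,K-k\}$ channels (one for each creation/annihilation type);
\item exactly $2$ operators left: the delicate case.
\end{itemize}
In the two-operator case the pair carried across the bond, one of $a^\ast a^\ast$, $a^\ast a$ or $a a$, requires $\binom{k}{2}$, $k^2$ and $\binom{k}{2}$ states if labelled from the left. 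The complementary right labelling has the same form with $K-k$. Keeping the side where the quadratic count is smaller, and absorbing the coefficients $v_{ijkl}$ on the other side (as in the standard ``complementary operator'' trick), bounds the total by $\tfrac12K^2+\tfrac32K+2$ after maximising over $k$. The Jordan–Wigner strings $S$ must be tracked: states with an odd number of fermionic operators to the left propagate $S$, all others propagate $I$.

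\textbf{Banded case for $\ten V$.} Only index pairs within a window of width $\tilde d$ straddling the bond contribute. I would count the one-, two- and three-operator channels explicitly: the two-sided pairs give roughly $\tilde d^2$ states and the single open index gives about $3\tilde d$. The parity of $\tilde d$ enters when the two-operator states are split optimally between left and right, which yields the $-1$ versus $-2$ correction.

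The main obstacle is this exact counting for $\ten V$, both in the general case and for the parity-dependent banded bound. It requires a careful choice, bond by bond, of whether to label each middle state from the left or the right, together with exploiting antisymmetry of $v$ to merge equivalent channels. Since the statement is cited from \cite[Theorems~5.4,~5.5]{BGP:22}, I would ultimately defer to that detailed bookkeeping and only verify the structural construction above.
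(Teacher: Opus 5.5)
The paper gives no proof of this statement. It quotes it from \cite[Theorems~5.4,~5.5]{BGP:22}, so your final deferral matches what the paper does. Your construction for $\ten T$ is sound: two trivial states, $\min\{k,K-k\}$ channels per ordering (at most $d$ per ordering in the banded case), and $S$-strings on channels with an odd number of operators placed.

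Your sketch for $\ten V$, however, contains a count that fails. Channels with one or three operators left of bond $k$ cannot be labelled from the cheaper side, because the alternative label is a triple of indices. One operator on the left must be labelled by its site $i\le k$ and type, giving $2k$ states. Three operators on the left must be labelled by the remaining site $j>k$ and type, giving $2(K-k)$ states. These channels therefore number $2K$ at every bond, not $2\min\{k,K-k\}$. The correct total is $2+2K+2m^2-m$ with $m=\min\{k,K-k\}$, and its maximum $\tfrac12K^2+\tfrac32K+2$ is exactly where the $\tfrac32K$ comes from. Your count would instead give $\tfrac12K^2+\tfrac12K+2$, which the construction you describe cannot realize.

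For the banded $\ten V$ the decisive ideas are missing, and your heuristic split (about $\tilde d^2$ pair states and $3\tilde d$ single states) is not how the bound arises.

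\emph{Pair states.} Choosing one side per bond yields $2\binom{\tilde d}{2}+\tilde d^2=2\tilde d^2-\tilde d$ pair states, hence a bound with leading term $2\tilde d^2$. To halve it you must split inside each bond. Let $a=k+1-p$ be the distance of the leftmost site $p$ of the left pair from the bond, and $b=s-k$ that of the rightmost site $s$ of the right pair. A nonzero term needs $a+b\le\tilde d+1$. By particle number conservation the pairs fall into three independent sectors: two creators left with two annihilators right, the reverse, and mixed with mixed. These contain $a-1$, $a-1$ and $2a-1$ left pairs of extent $a$, respectively. Label left pairs with $a\le t$ from the left and right pairs with $b\le\tilde d-t$ from the right. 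This covers every term at cost $2\bigl[\binom{t}{2}+\binom{\tilde d-t}{2}\bigr]+t^2+(\tilde d-t)^2$. For $t=\lfloor\tilde d/2\rfloor$ this equals $\tilde d^2-\tilde d$ for even $\tilde d$ and $\tilde d^2-\tilde d+1$ for odd $\tilde d$; this is the only place parity enters.

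\emph{Single-operator states.} An operator at distance exactly $\tilde d$ from the bond would need its three partners on one site. That forces two creation or two annihilation indices in $v_{ijkl}$ to coincide, so the term vanishes and there are only $4(\tilde d-1)$ such channels.

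Adding the two identity states gives $\tilde d^2+3\tilde d-2$ and $\tilde d^2+3\tilde d-1$. You would still need to check that this mixed labelling is consistent between neighbouring bonds: a left-labelled pair whose extent exceeds $t$ is absorbed, with its coefficients, into the right-labelled complementary states at that site.
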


In \cite{BGP:22}, it was shown that the action of such operators on an MPS in block-sparse representation can be performed by a \emph{matrix-free} scheme: Applying a single term in the sum~\eqref{eq:hamil} consists only of reordering and possibly rescaling the blocks in the MPS. The resulting tensors then have to be summed up by concatenating the blocks. This preserves the block structure but increases the ranks. We follow this paradigm and think of all operators that appear in this article as matrix-free reorderings, scalings, and concatenations of blocks, so that in particular, no additional memory is required for MPO representations.

\section{Overview of the Method and Main Results}\label{sec:overview}

In order to approximate the lowest eigenvalue of the Hamiltonian $\ten H$, we consider an inexact preconditioned inverse iteration that is compatible with the block-sparse MPS format described in \cref{sec:blockstructure}. We prove convergence of the proposed algorithm and obtain bounds on the ranks of the iterates and thus on the overall computational complexity. 
Although we consider the number $K$ of orbitals (and thus the order of the tensors that we operate on) to be fixed, we are in particular interested in how the behaviour of the method depends on $K$.

\subsection{Construction of the iterative method}\label{sec:construction}

In what follows, we assume $\gamma>0$ to be a shift parameter such that 
\[ \ten H_\gamma = \ten H + \gamma \ten I \] is symmetric positive definite. 
While the choice of $\gamma$ is problem-dependent, in the case of \eqref{eq:hamilt} it can be chosen to be independent of $K$, as considered in more detail in \cref{sec:shifts}.
In what follows, we take the positive eigenvalues of $\ten H_\gamma$ to be ordered as 
\begin{equation}\label{eq:eigenvalueordering}
0 < \lambda_1 \leq \lambda_2 \leq \ldots\leq \lambda_n, 
\end{equation}
 where typically the lowest eigenvalue $\lambda_1$ is of primary interest.

For a suitable self-adjoint, positive definite preconditioner $\ten S: \mathcal{F}_N^K \rightarrow \mathcal{F}_N^K$, we then replace the eigenvalue problem \cref{eq:Hevp} by the
equivalent symmetrically preconditioned generalized eigenvalue problem
\begin{equation}
\label{eq:Hevpprecond}
 \langle \ten S \ten H_\gamma \ten S \ten x, \ten y \rangle = \lambda \langle \ten S^2 \ten x, \ten y \rangle \quad\text{for all $\ten y \in \mathcal{F}_N^K$.}
\end{equation}
With the self-adjoint matrices
\[
   \A = \ten S \ten H_\gamma \ten S, \quad \E = \ten S^2, 
\]
the sought eigenvalue $\lambda_1$ is the minimum of the Rayleigh quotient 
\begin{equation}\label{eq:rayleigh1}
  \lambda (\ten x) := \frac{\langle \A \ten x, \ten x\rangle }{\langle \E \ten x ,\ten x \rangle }
\end{equation}
for \cref{eq:Hevpprecond}, that is,
\[
    \lambda_1  
   = \min_{\ten x \in \mathcal{F}_N^K} \frac{\langle \ten{H}_\gamma \ten x , \ten x\rangle }{\langle \ten x,\ten x \rangle}= \min_{\ten x \in \mathcal{F}_N^K} \lambda (\ten x) .
\]

The low-rank eigenvalue solvers that we consider in what follows are based on preconditioned inverse iteration, which with the above symmetric preconditioning and with an initial guess $\ten x^0$ takes the basic form
\begin{equation}\label{eq:basicpinvit}
   \ten x^{n+1}  = \ten x^n - \omega  \bigl(   \A \ten x^n - \lambda(\ten x) \E \ten x^n  \bigr) 
\end{equation} 
with a step size parameter $\omega> 0$ such that 
\begin{equation}\label{eq:precondassumption}
 \norm{ \ten I - \omega  \A  }_{\A} =  \norm{ \ten I - \omega  \A  } \leq \eiter < 1
\end{equation}
with some $\eiter  \in (0,1)$; here and in what follows, we write $\norm{\ten x}_{\ten A} = \sqrt{\langle \ten A \ten x, \ten x\rangle}$ for $\ten x \in \mathcal F^K$ and use the same notation for the induced operator norm.

Our further analysis is based on assumptions that are appropriate for representations of iterates and approximate eigenvectors in block-sparse MPS format and for corresponding representations of matrices. In this setting, achieving \eqref{eq:precondassumption} with $\eiter$ independent of $K$ leads to certain requirements on $\ten S$; in particular, the MPO ranks of $\ten S$ generally need to be larger than one, so that its action increases the ranks of MPS representations. The preconditioners that we construct in \cref{sec:precinsc} are of the form
\[ \ten S = \sum_{k=M^-}^{M^+} \ten S_k ,  \]
where each summand $\ten S_k: \mathcal{F}_N^K \rightarrow \mathcal{F}_N^K$ for $k = M^-,\ldots,M^+$ is a positive definite, diagonal matrix of MPO rank one (that is, a $K$-fold Kronecker product of matrices in $\R^{2\times 2}$).

Due to the combined MPO ranks of $\ten S$ and $\ten H_\gamma$, in our analysis we allow for inexact application of $\E$ and $\A$ with tensor rank truncations up to prescribed error tolerances in \eqref{eq:basicpinvit}.
However, the Rayleigh quotient $\lambda$ in \cref{eq:rayleigh1}, which takes the form
\begin{align}\label{exactrq}
 		\lambda(\ten x) = \frac{\displaystyle\sum_{k,k' = M^-}^{M^+} \langle \ten{H}_\gamma  \ten S_k \ten x, \ten S_{k'} \ten x\rangle}{\displaystyle\sum_{k,k' = M^-}^{M^+} \langle \ten S_k \ten x, \ten S_{k'} \ten x \rangle},
\end{align}
can be evaluated term by term without additional rank truncations. Due to this exact evaluation of $\lambda$ we can also take advantage of the faster convergence of Rayleigh quotients.

\subsection{Main results on convergence, rank estimates and computational complexity}\label{sec:eocc}

Our aim is to approximate the eigenpair $(\lambda_1, \ten u_1)$ of the preconditioned problem \eqref{eq:Hevpprecond} with $\norm{\ten u_1}_{\ten A} = 1$, under the assumption $\lambda_1 < \lambda_2$. This amounts to finding $\ten y \in \cF^K_N$ in low-rank representation such that  $\sin \angle_{\ten A} (\ten u_1 , \ten y)  \leq \norm{ \ten u_1 - \ten y }_{\ten A} \leq \varepsilon$ for each desired error tolerance $\varepsilon$. However, the ranks of such $\ten y$ should ideally not be larger than necessary for achieving an approximation of this quality. A natural point of reference is thus the least value of the maximal MPS rank $\norm{\ranks(\ten y)}_\infty$ that is required for $\norm{ \ten u_1 -\ten y}_{\ten A}\leq \varepsilon$, in other words,
\begin{equation}\label{eq:defrbest}
  r_\mathrm{best}(\ten u_1, \varepsilon) = \min \bigl\{ r \in \N_0 \colon \exists \ten y \in \cF^K_N , \norm{\ranks(\ten y)}_\infty \leq r \colon  \norm{\ten u_1 - \ten y }_{\ten A} \leq \varepsilon   \bigr\}\,.
\end{equation}
Our method has the following basic structure, where all operations on approximate eigenvectors are performed in MPS low-rank format: with a starting value $\ten y^0$, fixed $c_K \in (0,1)$ and $C_K>0$ sufficiently large, an initial error bound $\etaout^0>0$, and $\etaout^m = 2^{-m} \etaout^0$, 
\begin{tabbing}
 \hspace{9pt}\=\hspace{18pt}\=\hspace{24pt}\=\hspace{18pt}\=\hspace{18pt}\=\kill 
 \> for $m = 0, 1, 2,\ldots$, \\[3pt]
 \> \> (I1)\> with $\ten x^0 = \ten y^m$, determine $\ten x^{N_m}$ with sufficiently large $N_m$ by applying \cref{eq:basicpinvit}\\
 \> \>  \> with inexact residual evaluation and rank truncation, that is,   \\[6pt]
 \> \> \> \> $\ten{\tilde x}^{n+1} = \operatorname{trunc}_{\etain^n}(\ten{x}^n - \omega_n  \operatorname{res}_{\etares^n}(\ten{x}^n)) $ \\
 \> \> \> \> \>with suitably chosen $\etain^n,\etares^n>0$ and\\
 \> \> \> \> \> $\operatorname{res}_{\etares^n}(\ten{x}^n)$ such that $\norm{\operatorname{res}_{\etares^n}(\ten{x}^n) - (\ten A \ten x^n - \lambda(\ten x^n) \ten E \ten x^n)}_{\ten A} \leq \etares^n$,  \\[3pt]
 \> \> \> \> $\ten x^{n+1} = \ten{\tilde x}^{n+1}  / \norm{\ten{\tilde x}^{n+1} }_{\ten A}$, \\[3pt]
 \>\>\> such that $\norm{ \ten u_1 - \ten x^{N_m}} \leq c_K \etaout^m$.\\[6pt]
 \> \>  (I2)\> determine $\ten y^{m+1}$ by rank truncation of $\ten x^{N_m}$,\\[3pt]
 \>\>\>\> $\ten{\tilde y}^{m+1} = \operatorname{trunc}_{C_K \etaout^m} ( \ten x^{N_m} ),\qquad$ $\ten y^{m+1} = \ten{\tilde y}^{m+1} / \norm{ \ten{\tilde y}^{m+1} }_{\ten A}.$
\end{tabbing}
Whereas the inner iteration in step (I1) yields an error reduction, step (I2) performs a complexity reduction that, as shown below, ensures near-optimal ranks of $\ten y^m$, $m \in \N$.
The complete scheme with the concrete choices of algorithmic parameters is given in \Cref{alg:outerpinvit} in \cref{sec:conv}.

\begin{theorem}\label{thm:main}
For the results of steps {\rm (I1)} and {\rm (I2)} above, with $\gamma$ and $\eiter$ in \eqref{eq:precondassumption} independent of $K$ and  parameters chosen as in \Cref{alg:outerpinvit}, 
we have
\[
  \norm{ \ten u_1 - \ten y^m }_{\ten A} \leq \etaout^m,
\]
while the number of inner iterations performed for each $m$ is bounded, uniformly in $m$, by $C \log K$ with $C$ independent of $K$, and
\[
   \norm{\ranks(\ten y^m)}_\infty \leq r_\mathrm{best} \left(\ten u_1, \frac{c_1 \etaout^m }{1 + c_2 \sqrt{K-1}}  \right)
\]
with $c_1, c_2>0$ independent of $K$ and $m$. 
\end{theorem}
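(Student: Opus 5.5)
The proof splits into three parts: the error bound $\norm{\ten u_1 - \ten y^m}_{\ten A}\le\etaout^m$, the uniform bound $C\log K$ on the number of inner iterations, and the rank bound. All three rest on a perturbed one-step contraction estimate for the inexact inner iteration (I1).

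\textbf{Inner contraction.} First I would show the following. Suppose $\sin\angle_{\ten A}(\ten u_1,\ten x^n)$ is below a fixed threshold; this is the sufficiently-good-start regime. Then there is $q\in(0,1)$, independent of $K$, such that
\[
\norm{\ten u_1-\ten x^{n+1}}_{\ten A}\le q\,\norm{\ten u_1-\ten x^{n}}_{\ten A}+C(\etares^n+\etain^n).
\]
The exact part of this comes from the Knyazev--Neymeyr analysis of PINVIT. By \eqref{eq:precondassumption}, the step $\ten I-\omega\ten A$ contracts in the $\ten A$-norm with factor $\eiter$. This yields reduction of the angle in terms of $\eiter$ and the gap $\lambda_1/\lambda_2$, and both quantities are independent of $K$.

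To pass from angles to norm errors, I would use the normalization $\norm{\ten x^n}_{\ten A}=1$ together with a sign choice for $\ten u_1$. Near the eigenvector, norm error and $\sin\angle$ are then equivalent up to constants. The perturbation terms $\etares^n$ and $\etain^n$, as well as the renormalization, enter additively with constants. This follows the sharpening of \cite{Rohwedder:11} announced in Section~\ref{sec:conv}.

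The tolerances are chosen proportional to the current error bound, $\etares^n,\etain^n\sim\kappa\, q^n\etaout^m$ with $\kappa$ small. By induction this gives geometric decay of the inner error.

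\textbf{Error bound and iteration count.} Suppose inductively that $\norm{\ten u_1-\ten y^m}_{\ten A}\le\etaout^m$. The inner loop must reach $c_K\etaout^m$, so it needs about $\log(1/c_K)/\log(1/q)$ steps. Choosing $c_K\sim(1+c_2\sqrt{K-1})^{-1}$, polynomially small in $K$, this count is $C\log K$.

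The truncation step (I2) then satisfies the triangle inequality with tolerance $C_K\etaout^m$. Renormalization at most doubles the error, by the standard estimate $\norm{\ten u-\ten z/\norm{\ten z}}\le2\norm{\ten u-\ten z}$ for unit $\ten u$. Hence
\[
\norm{\ten u_1-\ten y^{m+1}}_{\ten A}\le 2(c_K+C_K)\etaout^m\le\etaout^{m+1},
\]
provided $c_K+C_K\le 1/4$.

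One technical point is that the truncation in \eqref{eq:ttsvdquasiopt} acts in the Euclidean norm, not the $\ten A$-norm. I would handle this by bounding $\norm{\cdot}_{\ten A}$ against $\norm{\cdot}$ via the spectrum of $\ten A$. These constants should be $K$-independent given the preconditioner analysis of Section~\ref{sec:precinsc}; otherwise truncation would be carried out on suitably transformed variables.

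\textbf{Rank bound.} This is the main step, and I would argue in the style of \cite{BachmayrDahmen:15}. Set $\varepsilon=c_1\etaout^m/(1+c_2\sqrt{K-1})$ and let $\ten w$ be a best approximation of $\ten u_1$ with ranks $r_\mathrm{best}(\ten u_1,\varepsilon)$. Then
\[
\norm{\ten x^{N_m}-\ten w}\lesssim c_K\etaout^m+\varepsilon.
\]
The quasi-optimality \eqref{eq:ttsvdquasiopt} says that truncating $\ten x^{N_m}$ to the ranks of $\ten w$ incurs error at most $\sqrt{K-1}\,\norm{\ten x^{N_m}-\ten w}$. So if
\[
C_K\etaout^m\ge\sqrt{K-1}\,\bigl(c_K\etaout^m+\varepsilon\bigr)
\]
up to norm-equivalence constants, then $\operatorname{trunc}_{C_K\etaout^m}$, which retains the minimal number of singular values, produces ranks no larger than those of $\ten w$.

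This forces $c_K\lesssim C_K/\sqrt{K-1}$ and determines $c_1$ and $c_2$. The main obstacle is balancing these constants consistently with $c_K+C_K\le1/4$ while keeping the norm-equivalence constants $K$-independent. The $\sqrt{K-1}$ factor is then absorbed into the denominator of the rank bound.
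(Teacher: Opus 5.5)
\noindent The outer loop and rank bound are fine: you reproduce the paper's Bachmayr--Dahmen recompression step (\Cref{thm:quasioptr}, following \cite[Theorem~5.5]{B23}), with $\tau_m/\etaout^m$ and $\theta$ in the roles of your $c_K$ and $C_K$, and your balancing of $c_K,C_K,c_1,c_2$ is consistent. The gap is the inner contraction behind your $C\log K$ count. The one-step bound $\norm{\ten u_1-\ten x^{n+1}}_{\A}\le q\norm{\ten u_1-\ten x^n}_{\A}+\ldots$, with $q<1$ depending only on $\eiter$ and $\lambda_1/\lambda_2$, does not follow from Knyazev--Neymeyr, and it fails in the worst case. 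Their estimate, restated in \Cref{thm:conv}, contracts the Rayleigh quotient ratio $\frac{\lambda(\ten x)-\lambda_1}{\lambda_2-\lambda(\ten x)}$, not the angle. Up to scaling, the admissible next iterates fill the ball of radius $\eiter\norm{\ten r_{\ten B}(\ten x)}_{\A}$ around $\ten B\ten x$, and the angle can grow in one step. Example: take $\lambda_2=2\lambda_1$ and $\eiter=0.95$. Let the error of $\ten x$ be split with squared $\A$-norm weights $\frac14$ and $\frac34$ between $\ten u_2$ and an eigenvector of $(\A,\E)$ with very large eigenvalue. Rotate the preconditioner error into the $\ten u_2$ direction. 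Then the ratio of the $\tan\angle_{\A}(\ten u_1,\cdot)$ after and before the step tends to $\frac14+0.95\sqrt{13/16}\approx 1.1$ as the angle tends to zero, even though $\lambda$ decreases.

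\noindent The missing idea is the paper's two-stage argument. Keep the per-step contraction in the ratio, with factor $\tilde q^2$ from \Cref{thm:conv}(ii). Convert to the eigenvector error only after several steps, using $1-\lambda_1/\lambda(\ten x)\le\sin^2\angle_{\A}(\ten u_1,\ten x)\le\delta\,(1-\lambda_1/\lambda(\ten x))$ from \Cref{raylangleb}. \Cref{rankoptaprior} shows that reducing the ratio by $\nu=(4u^2\delta-1)^{-1}$ reduces $\norm{\ten u_1-\ten x/\norm{\ten x}_{\A}}_{\A}$ by the factor $1/u$. This yields \Cref{thm:reduction} with $\log(4u^2\delta-1)/\log(\tilde q^{-2})$ steps. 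With $u=2(1+(1+\alpha)\kappa)\lesssim\sqrt{K-1}$, that is $C\log K$. In your framework, you get halving only over blocks of $J=\lceil\log(16\delta-1)/\log(\tilde q^{-2})\rceil$ steps, with $J$ independent of $K$; that still suffices for your count. Note also that the theorem concerns the algorithm's own tolerance rules. These are relative residual accuracy $\etares\le\zetares\norm{\ten r(\ten x)}$, $\etain$ chosen through $\teig$ so the truncation is absorbed into $\tilde q^2$, and termination by the a posteriori bound of \Cref{aposterbound}. Your a priori absolute tolerances $\etares^n,\etain^n\sim q^n\etaout^m$ define a different scheme. For it you would still need to show that additive errors propagate boundedly through each $J$-step block.
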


The proof is given in \Cref{sec:outeriter}.
We thus establish bounds for the ranks of iterates in the outer iterations indexed by $m$ in terms of best approximation ranks.
However, ranks may in general grow beyond these values in inner iterations, and with our present approach we can only give bounds on the intermediate ranks that can arise in the inner iterations that also depend on the ranks of the involved operators. 
While for this reason, we do not arrive at final assertions on the total computational complexity of methods, but we summarize some first results in this direction in \cref{rem:intermediateranks}.

\section{Preconditioning in Second Quantization}\label{sec:precinsc}

\subsection{Spectral shifts} \label{sec:shifts}
As discussed in \Cref{sec:construction}, the method that we study here requires a shift $\gamma > 0$ such that $\ten H_\gamma = \ten H + \gamma \ten I$ is positive definite, as well as a suitable preconditioner for $\ten H_\gamma$. The choice of such a shift is problem-dependent and can thus be expected to depend in particular on the number of particles $N$. However, it is desirable to avoid a dependence of $\gamma$ on the number of basis orbitals $K$, which plays the role of a discretization parameter.

For the Hamiltonian $H$ as in \eqref{eq:hamilt}, we obtain shifts with this latter property of $K$-independence by finding $\gamma$ such that the shifted Hamiltonian $H + \gamma I$ becomes elliptic on the appropriate energy space. In this case, under appropriate assumptions on the external potential $V$, there  exists $\gamma>0$ such that
\begin{equation}\label{eq:Hellipt}
  \langle H v, v \rangle + \gamma \norm{v}_{L_2}^2 \geq \textstyle\frac14\displaystyle \norm{v}_{H^1}^2, \quad \forall v \in H^1\,.
\end{equation}
If $V$ in \eqref{eq:hamilt} is the molecular potential of the standard electronic Schr\"odinger equation,
\[
   V(x) =  - \sum_{\nu} \frac{Z_\nu}{\norm{x - R_\nu}_2} , \quad x \in \R^3,
\]
then the minimal $\gamma$ such that \eqref{eq:Hellipt} holds depends only (algebraically) on $N$ and on $\sum_{\nu} Z_\nu$; see \cite[eq.~(3.16)]{Y:10}.

With $\gamma$ as in \eqref{eq:Hellipt}, and with the second-quantized representation $\ten L$ of the operator $-\Delta + I$, which reads 
\begin{equation}\label{eq:Ldef}
   \ten L =  \sum_{i,j = 1}^K \ell_{ij}\, \ten a_i^\ast \ten a_j, \quad \ell_{ij} = \int_{\R^3} \nabla \phi_i \cdot \nabla \phi_j + \phi_i \phi_j \sdd x\,,
\end{equation}
for each $N$ and $K$ we obtain
\begin{equation}
   \textstyle \frac14 \displaystyle \langle  \ten L  \ten  x , \ten x\rangle  \leq  \langle \ten H_\gamma \ten x , \ten x \rangle \leq \Gamma \langle \ten L \ten x,\ten x\rangle, \quad
    \forall \ten x \in \cF_N^K, 
\end{equation}
where $\gamma, \Gamma > 0$ depend only $N$ and $Z$, but neither on $K$ nor on the orbitals. This implies in particular
\begin{equation}\label{eq:Lcond}
  \operatorname{cond} \bigl( \ten L^{-1/2}  \ten H_\gamma \ten L^{-1/2}  \bigr) \leq 4 \Gamma.
\end{equation}
Note that $\ten x$ is a solution of the original eigenvalue problem \cref{eq:Hevp} for $\ten H$ if and only if $\ten z = \ten L^{1/2} \ten x$ solves
\[
   \langle  \ten L^{-1/2}  \ten H_\gamma \ten L^{-1/2}  \ten z, \ten y \rangle = \lambda \langle \ten L^{-2} \ten z, \ten y \rangle \quad\text{for all $\ten y \in \mathcal{F}_N^K$,}
\]
where we use that $\ten L$ and its powers preserve the particle number, that is, leave $\mathcal{F}_N^K$ invariant. 

While this two-sided preconditioning thus leads to a formulation that is suitable for preconditioned inverse iteration, the action of $ \ten L^{-1/2}$ is difficult to realize numerically.
We thus instead use a substitute for this mapping that is compatible with the low-rank representations used for elements of $\mathcal{F}_N^K$.

\subsection{Exponential sums}\label{sec:expsum}

We make use of exponential sum approximations of $t^{-\lambda}$, $\lambda > 0$, that are based on sinc quadrature with step size parameter $h > 0$. By \cite[Lemma 5.3]{Scholz_2017}, we have
\begin{equation}
 \left| \frac{1}{t^\lambda} - \frac{h}{\Gamma(\lambda)} \sum_{m = -\infty}^\infty e^{\lambda m h} \exp(-e^{m h} t) \right| \leq \frac{C(\lambda,h)}{t^\lambda} \quad \text{for all $t > 0$}
\end{equation}
with a $C(\lambda, h)>0$ independent of $t$. 
For $\lambda = \frac12$, with an appropriate truncation of the summation, this yields an approximation of $t\mapsto t^{-1/2}$ on any desired subinterval of $(0, \infty)$.

\begin{theorem}\label{thm:expsum}
Let $\ten W \in \R^{n \times n}$ be symmetric with spectrum $\sigma(\ten W) \subset [1,T]$. Let $c_0 \in (0,1)$ and
\begin{equation}\label{eq:expsum}
  S(t) = \frac{h}{\sqrt{\pi}} \sum_{m = M^-}^{M^+} e^{m h / 2} \exp(-e^{m h} t),
\end{equation}
where
\begin{equation}\label{eq:expsumparams}
  h \!=\! \frac{\pi^2}{\ln(\frac{8\sqrt{2}}{c_0} + 1)}, \;\; M^+ = \left\lceil h^{-1} \ln \biggabs{ \ln \frac{\sqrt{\pi} c_0}{4} } \right\rceil, \;\; M^- = -\left\lceil h^{-1}\left( 2 \biggabs{ \ln \frac{\sqrt{\pi} c_0 }4 } + \ln T \right) \right\rceil.
\end{equation}
Then 
\begin{align}\label{Shatdeltabound}
  (1-c_0)^2 \langle   \ten x,  \ten x \rangle
   \leq \langle   S(\ten W)  \ten W  S(\ten W) \ten x , \ten x \rangle 	\leq 
   (1 + c_0)^2 \langle  \ten x,  \ten x \rangle, \quad \forall \ten x \in \R^n.
\end{align}
\end{theorem}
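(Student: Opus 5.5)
Since $\ten W$ is symmetric, I diagonalize it as $\ten W = Q\Lambda Q^\top$ with eigenvalues $t\in[1,T]$. Then $S(\ten W)\ten W S(\ten W) = Q\,\diag\bigl(t S(t)^2\bigr)Q^\top$, and the claim \eqref{Shatdeltabound} reduces to the scalar estimate
\[
  \abs{\sqrt t\, S(t) - 1} \le c_0 \quad\text{for all } t\in[1,T],
\]
because this gives $(1-c_0)^2 \le t S(t)^2 \le (1+c_0)^2$. Equivalently, writing the relative error as $\abs{t^{-1/2} - S(t)} \le c_0 t^{-1/2}$, it suffices to show that $S$ approximates $t^{-1/2}$ on $[1,T]$ with relative accuracy $c_0$.

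I split this relative error into three parts: the discretization error of the infinite sinc sum, the truncated tail $m>M^+$, and the truncated tail $m<M^-$. Each part will be made at most a fixed fraction of $c_0$; the constants in \eqref{eq:expsumparams} suggest a split such as $c_0/2$ for discretization and $c_0/4$ for each tail. Note that $\Gamma(1/2)=\sqrt\pi$, which gives the prefactor $h/\sqrt\pi$.

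\textbf{Discretization error.} Here I use \cite[Lemma 5.3]{Scholz_2017} with $\lambda=1/2$ and need an explicit form of $C(1/2,h)$. The standard sinc estimate gives a bound of the type $C \lesssim \exp(-\pi^2/h)$ up to a constant factor. With $h=\pi^2/\ln(8\sqrt2/c_0+1)$ we have $e^{\pi^2/h} = 8\sqrt2/c_0+1$, so the discretization error becomes of order $c_0/(8\sqrt2)$ times a constant. I would carry out this step by writing $t^{-1/2} = \pi^{-1/2}\int_\R e^{s/2}\exp(-e^s t)\,ds$ and bounding the trapezoidal error via analyticity of the integrand in the strip $\abs{\operatorname{Im} s}<\pi/2$. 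Verifying that the strip constant matches $8\sqrt2$ is the main obstacle I expect: the bound on $\abs{\exp(-e^{s}t)}$ in the strip degenerates near $\abs{\operatorname{Im} s}=\pi/2$, so the strip width and a $\sqrt2$ factor need to be balanced. I would take the constant from Scholz's lemma, or redo it with width $\pi/4$ (giving $\cos(\pi/4)=1/\sqrt2$, hence the $\sqrt2$).

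\textbf{Upper tail, $m>M^+$.} Since $t\ge1$, the terms satisfy $e^{mh/2}\exp(-e^{mh}t)\le t^{-1/2}\,\bigl(e^{mh}t\bigr)^{1/2}\exp(-e^{mh}t)$. Comparing the sum with an integral, $h\sum_{m>M^+}\le\int_{M^+h}^\infty$, I get $\sqrt t$ times the tail at most $\pi^{-1/2}\int_{e^{M^+h}}^\infty u^{-1/2}e^{-u}\,du \lesssim \exp(-e^{M^+h})$. Since $e^{M^+h}\ge\abs{\ln(\sqrt\pi c_0/4)}$, this is bounded by $c_0/4$. The monotonicity needed for the integral comparison has to be checked, since $u^{1/2}e^{-u}$ decreases for $u\ge1/2$; any boundary term can be absorbed.

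\textbf{Lower tail, $m<M^-$.} Here $\exp(-e^{mh}t)\le1$, so the tail is bounded by $\frac{h}{\sqrt\pi}\sum_{m<M^-}e^{mh/2}\lesssim e^{M^-h/2}$. Multiplying by $\sqrt t\le\sqrt T$ gives $\sqrt T\,e^{M^-h/2}\le \sqrt T\,\bigl((\sqrt\pi c_0/4)^2/T\bigr)^{1/2} = \sqrt\pi c_0/4$, up to the geometric-sum factor $h/(1-e^{-h/2})$. That factor is controlled because $h$ is of moderate size, though this needs a brief check.

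Summing the three contributions gives $\abs{\sqrt t\,S(t)-1}\le c_0$ on $[1,T]$, which yields \eqref{Shatdeltabound}.
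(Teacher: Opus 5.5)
Your route is the paper's: you reduce to the scalar bound $\abs{t^{-1/2}-S(t)}\le c_0t^{-1/2}$ on $[1,T]$, control the untruncated sinc sum by Scholz's constant $C(\frac12,h)\le 2\sqrt2/(e^{\pi^2/h}-1)$, and bound the two truncation tails. The paper imports the tail bounds from \cite[Corollary 4.3]{B23}, whereas you derive them by hand. What needs fixing is the constant bookkeeping. The parameters in \eqref{eq:expsumparams} leave no slack: the three bounds come out as $c_0/4$ for the sinc error (since $e^{\pi^2/h}-1=8\sqrt2/c_0$), $c_0/4$ for the upper tail and $c_0/2$ for the lower tail. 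These sum to exactly $c_0$, so your suggested split $c_0/2,c_0/4,c_0/4$ does not fit.

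\textbf{Lower tail.} You have $h\sum_{m<M^-}e^{mh/2}\le\int_{-\infty}^{M^-h}e^{s/2}\,ds=2e^{M^-h/2}$. With $\sqrt t\le\sqrt T$, the tail is therefore at most $\frac{2}{\sqrt\pi}\sqrt T\,e^{M^-h/2}\le c_0/2$. The geometric factor $h/(e^{h/2}-1)$ is not harmless because $h$ is moderate. In fact $h\to0$ as $c_0\to0$ and the factor tends to $2$, so your estimate does not give $c_0/4$ for small $c_0$; it is only uniformly bounded by $2$.

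\textbf{Upper tail.} Put $a=te^{M^+h}$. You need $a\ge1$, which holds because $M^+\ge0$ for every $c_0\in(0,1)$. Then $\pi^{-1/2}a^{-1/2}e^{-a}\le\pi^{-1/2}e^{-a}\le c_0/4$.

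\textbf{Sinc error.} This step must use Scholz's sharp constant. It comes from Poisson summation with $\hat F(\omega)=\Gamma(\frac12-i\omega)\,t^{-1/2+i\omega}$ and $\abs{\Gamma(\frac12+iy)}^2=\pi/\cosh(\pi y)\le 2\pi e^{-\pi\abs{y}}$, which is where the $\sqrt2$ comes from. Your fallback of a strip $\abs{\operatorname{Im} s}<\pi/4$ would only give decay $e^{-\pi^2/(2h)}=(8\sqrt2/c_0+1)^{-1/2}$. That is of order $\sqrt{c_0}$, so it would not close with this choice of $h$.
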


\begin{proof}
We combine the truncation estimates in \cite[Corollary 4.3]{B23} with the upper bound $C(\frac12, h) \leq  \frac{2\sqrt{2} }{e^{\pi^2 / h}-1}$ shown in \cite{Scholz_2017}.
For \eqref{eq:expsum}, the choices \eqref{eq:expsumparams}
ensure in particular that $C(\frac{1}{2},h) \leq c_0 / 4$ and thus the relative error bound
\begin{equation}\label{Srelerror}
  \left| \frac{1}{\sqrt{t}} - S(t) \right| \leq \frac{c_0}{\sqrt{t}}, \quad \forall t \in [1,T].
\end{equation}
This immediately implies \eqref{Shatdeltabound}.
\end{proof}

Note that \eqref{Shatdeltabound} implies 
\[  
    \operatorname{cond}\bigl(S(\ten W) \ten W S(\ten W) \bigr) \leq \frac{(1+c_0)^2}{(1-c_0)^2}.
\]   
By appropriate rescaling, as shown in \Cref{cor:lrprecond} below, the above applies to any matrix $\ten W$ with $\mathrm{cond}(\ten W) \leq T$.

\subsection{Expression for second quantization}\label{sec:expfsq}

For constructing a preconditioner of $\ten H_\gamma$, we use an auxiliary matrix $ \ten D$ of diagonal one-particle interactions,
\[ \ten D = \sum_{i = 1}^K d_{i}\, \ten a_i^\ast \ten a_i,\]
where we assume $\gamma$ and $\ten D$ to be chosen such that $\ten D^{-1/2} \ten H_\gamma \ten D^{-1/2}$ is well-conditioned, with a condition number that is in particular independent of $K$. Provided that we have an estimate of the form \eqref{eq:Lcond} for the given problem, this can be achieved by choosing $\ten D$ to be spectrally equivalent to the representation of the Laplacian $\ten L$ as in \eqref{eq:Ldef}. That this can be achieved uniformly in $K$ amounts to requiring that for the underlying $L^2$-orthonormal orbitals $\{ \phi_i \}$, the rescaling $\{ \phi_i  / \norm{\phi_i}_{H^1} \}$ yields a Riesz basis of $H^1$ with uniform constants.
We thus assume that with $C_{\mathrm{lower}} , C_{\mathrm{upper}} > 0$, 
\begin{equation}\label{eq:lowerupper}
  C_{\mathrm{lower}} \langle   \ten x,  \ten x \rangle
   \leq \langle \ten D^{-1/2} \ten H_\gamma \ten D^{-1/2}  \ten x , \ten x \rangle 	\leq 
   C_{\mathrm{upper}} \langle  \ten x,  \ten x \rangle, \quad \forall \ten x \in \cF^{K}_N.
\end{equation}

We now construct low-rank approximations of $\ten D^{-1/2}$ based on exponential sums.

\begin{cor}\label{cor:lrprecond}
Let $t_{\min}, t_{\max}$ be chosen such that $[  t_{\min},  t_{\max} ] \supset [\min(\sigma(\ten D)) ,  \max(\sigma(\ten D))]$, and with $c_0 \in (0,1)$ and $S$ as in \Cref{thm:expsum}, let
\begin{equation}\label{eq:Sdef}
  \ten S =  \alpha_0 S \bigl( t_{\min}^{-1} \ten D \bigr) , \quad \alpha_0 = \sqrt{\frac{2}{ t_{\min} (C_{\mathrm{lower}} ( 1 - c_0)^2 + C_{\mathrm{upper}}( 1 + c_0)^2)}}\,.
\end{equation}
Then for $\A = \ten S \ten H_\gamma \ten S$, we have
\begin{align}\label{specequivA}
  (1-c) \langle   \ten x,  \ten x \rangle
   \leq \langle \A  \ten x , \ten x \rangle 	\leq 
   (1 + c) \langle  \ten x,  \ten x \rangle, \quad \forall \ten x \in \mathcal{F}_N^K ,
\end{align}
where 
\begin{equation}\label{eq:precondcdef}
   c = \frac{C_{\mathrm{upper}}(1+c_0)^2 - C_{\mathrm{lower}}(1-c_0)^2}{C_{\mathrm{upper}}(1+c_0)^2 + C_{\mathrm{lower}}(1-c_0)^2}  \in (0,1) \,,
\end{equation}
and as a particular consequence,
\begin{equation}\label{rhockappa}
\norm{ \ten I - \A }_2 \leq c.
\end{equation}
\end{cor}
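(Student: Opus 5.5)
The plan is to reduce to \Cref{thm:expsum} applied to $\ten W = t_{\min}^{-1}\ten D$ restricted to $\cF^K_N$, and then combine with \eqref{eq:lowerupper} by a change of variables. Since $\ten D$ is diagonal in the occupation number basis and preserves particle number, $\ten W$, $S(\ten W)$, $\ten D^{\pm 1/2}$ all commute and leave $\cF^K_N$ invariant; the spectrum of $\ten W$ lies in $[1,T]$ with $T = t_{\max}/t_{\min}$. (If $\ten D$ has a zero eigenvalue on $\cF^K$, e.g.\ the vacuum, I would work on $\cF^K_N$ with $N\geq 1$ where $\ten D$ is positive; \Cref{thm:expsum} is then applied to the restriction, which is a symmetric matrix on that subspace.)

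Step one: the pointwise relative bound \eqref{Srelerror}, $|t^{-1/2}-S(t)|\le c_0 t^{-1/2}$ on $[1,T]$, gives via the spectral calculus that $\ten P := S(\ten W)\ten W^{1/2}$ is a positive diagonal operator with spectrum in $[1-c_0,1+c_0]$. Write $\ten S = \alpha_0 S(\ten W) = \alpha_0 t_{\min}^{-1/2}\,\ten P\,\ten D^{-1/2}\cdot t_{\min}^{1/2}\cdots$; more precisely $\ten S = \alpha_0 t_{\min}^{1/2}\,\ten P\,\ten D^{-1/2}$, using $\ten W^{-1/2} = t_{\min}^{1/2}\ten D^{-1/2}$. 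Then
\[
\langle \A\ten x,\ten x\rangle = \alpha_0^2 t_{\min}\,\langle \ten D^{-1/2}\ten H_\gamma\ten D^{-1/2}\ten P\ten x,\ten P\ten x\rangle ,
\]
using that $\ten P$ commutes with $\ten D^{-1/2}$ and is self-adjoint.

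Step two: apply \eqref{eq:lowerupper} to $\ten P\ten x\in\cF^K_N$ and then $(1-c_0)^2\norm{\ten x}^2\le\norm{\ten P\ten x}^2\le(1+c_0)^2\norm{\ten x}^2$, yielding
\[
\alpha_0^2 t_{\min} C_{\mathrm{lower}}(1-c_0)^2\norm{\ten x}^2\le\langle\A\ten x,\ten x\rangle\le \alpha_0^2 t_{\min} C_{\mathrm{upper}}(1+c_0)^2\norm{\ten x}^2 .
\]
Inserting $\alpha_0^2 t_{\min} = 2/(C_{\mathrm{lower}}(1-c_0)^2 + C_{\mathrm{upper}}(1+c_0)^2)$ the two constants become exactly $1-c$ and $1+c$ with $c$ as in \eqref{eq:precondcdef}; a short algebraic check confirms $2a/(a+b)=1-\frac{b-a}{b+a}$. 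That $c\in(0,1)$ follows from $0<C_{\mathrm{lower}}(1-c_0)^2<C_{\mathrm{upper}}(1+c_0)^2$ (strict since $c_0>0$ and $C_{\mathrm{lower}}\le C_{\mathrm{upper}}$).

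Step three: \eqref{specequivA} says the symmetric operator $\A$ on $\cF^K_N$ has spectrum in $[1-c,1+c]$, so $\ten I-\A$ has spectrum in $[-c,c]$ and its spectral norm is at most $c$, giving \eqref{rhockappa}. The only mildly delicate point is step one: correctly passing from the scalar relative error bound to the operator statement for $\ten P$ and making sure all operators commute and preserve $\cF^K_N$, which holds because everything is a function of the diagonal $\ten D$.
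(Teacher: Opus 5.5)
Your proof is correct and follows essentially the same route as the paper. Your $\ten P = S(\ten W)\ten W^{1/2}$ is exactly $\ten D^{1/2}\ten S_0$ for the paper's $\ten S_0 = t_{\min}^{-1/2}S(t_{\min}^{-1}\ten D)$, so the substitution $\ten P\ten x$ is the implicit step by which the paper combines \eqref{Shatdeltabound} with \eqref{eq:lowerupper}, and inserting $\alpha_0^2 t_{\min} = 2/(C_{\mathrm{lower}}(1-c_0)^2 + C_{\mathrm{upper}}(1+c_0)^2)$ is the paper's optimal scaling $\omega$.
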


\begin{proof}
In a first step, we adapt the approximation on the normalized interval $[1,T]$ provided by \Cref{thm:expsum} by setting
\[
 \ten S_0 = t_{\min}^{-1/2}  S \bigl( t_{\min}^{-1} \ten D \bigr)  \,.
\]
By \Cref{thm:expsum}, with $c_0$ as in \eqref{eq:lowerupper}, for all $\ten x$ we have
\[   (1-c_0)^2 \langle   \ten x,  \ten x \rangle
   \leq \langle   \ten S_0  \ten D  \ten S_0 \ten x , \ten x \rangle 	
   \leq    (1 + c_0)^2 \langle  \ten x,  \ten x \rangle,
\]
which combined with \eqref{eq:lowerupper} yields
\begin{align*}
  (1-c_0)^2 C_{\mathrm{lower}} \langle   \ten x,  \ten x \rangle
   \leq \langle \ten S_0 \ten H_\gamma \ten S_0  \ten x , \ten x \rangle 	\leq 
   (1+c_0)^2 C_{\mathrm{upper}} \langle  \ten x,  \ten x \rangle, \quad \forall \ten x \in \cF^K_N,
\end{align*}
or in other words,
\[
  \sigma(\ten S_0 \ten H_\gamma  \ten S_0)\subseteq [  C_{\mathrm{lower}}(1-c_0)^2,   C_{\mathrm{upper}}(1+c_0)^2 ] \,.
\] 
We now use that the $\omega > 0$ such that $\norm{ \ten I - \omega \ten S_0 \ten H_\gamma \ten S_0 }_2$ is minimal subject to these spectral bounds is given by 
$ \omega = 2 / (C_{\mathrm{lower}}(1-c_0)^2 +  C_{\mathrm{upper}}(1+c_0)^2) $ to arrive at the scaling factor in \eqref{eq:Sdef} and the bounds \eqref{specequivA} and \eqref{rhockappa}.
\end{proof}

Since the summands of ${\ten D}$ commute, recalling the definition of $\ten a_i$ for $i=1,\ldots,K$ in terms of $A$ as in \eqref{eq:elementarycomponents}, we can simplify $\ten S$ to
\begin{align*}
\ten S & =
 \sum_{m = M^-}^{M^+} \alpha_m \exp(-\beta_m (d_{1} A^\trp A \otimes I \otimes \cdots \otimes  I + \ldots +  I \otimes \cdots \otimes  I \otimes d_K A^\trp A))   \\
& =  \sum_{m = M^-}^{M^+} \alpha_m  \begin{pmatrix}
1 & 0 \\ 0 & e^{-\beta_m   d_1}
\end{pmatrix}  \otimes \begin{pmatrix}
1 & 0 \\ 0 & e^{-\beta_m   d_2}
\end{pmatrix}  \otimes \cdots \otimes  \begin{pmatrix}
1 & 0 \\ 0 & e^{-\beta_m   d_K}
\end{pmatrix} 
\end{align*}
with $\alpha_m = \frac{h \alpha_0}{\sqrt{ \pi}} e^{m h / 2}$, $\beta_m = \frac{1}{\sqrt{t_{\min}}} e^{m h}$.
Note that we can also rewrite this as
\[ 
    \ten S = \sum_{m = M^-}^{M^+} \prod_{i = 1 }^K ( \alpha_m^{1/K } \ten a_i \ten a_i^\ast + \alpha_k^{1/K} e^{-\beta_m d_i} \ten a_i^\ast \ten a_i ) = \sum_{m = M^-}^{M^+} 
\bigotimes_{i = 1}^K \begin{pmatrix} \alpha_m^{1/K} & 0 \\ 0 & \alpha_m^{1/K} e^{-\beta_m d_i} \end{pmatrix}. 
\]
In particular, the action of $\ten S$ increases the ranks of an MPS representation by at most a factor of $M^+ - M^- + 1$, with $M^+$ and $M^-$ as in \eqref{eq:expsumparams}.

\section{Convergence of Inexact Preconditioned Inverse Iteration}\label{sec:conv}

In this section, we consider the case of a ground state eigenvalue $\lambda_1$ of multiplicity one, so that $\lambda_1 < \lambda_2$. We return to the case of potentially higher multiplicities and joint approximations of several eigenspaces in \Cref{sec:multiple}.

For the mapping applied in every step of the iterative scheme \eqref{eq:basicpinvit}, we introduce the notation
\begin{equation}\label{eq:unpertiter}
   \Psi (\ten x, \omega) = \ten x - \omega \ten r(\ten x), \qquad \ten r(\ten x) = \A \ten x - \lambda(\ten x) \E \ten x.	
\end{equation}
We assume the step size parameter $\omega>0$ to be chosen such that $ \| \ten I - \omega  \A \|_{\A}  \leq \eiter < 1$ with $\eiter \in (0,1)$.
For $\ten A$ as in \Cref{cor:lrprecond}, for the particular choice $\omega_{\mathrm c} = 1$, by \eqref{rhockappa} we have $\eiter = c$ with $c$ as in \eqref{eq:precondcdef}.

\subsection{Relation to forward iteration}\label{corfor}
For the convergence analysis, following \cite{KN:09,Rohwedder:11}, we rewrite the iterative scheme defined by \eqref{eq:unpertiter} as an equivalent forward iteration 
for the maximization of the Rayleigh quotient
\begin{align*}
 \mu(\ten x) = \frac{\langle  \ten B\ten x, \ten x \rangle_{\ten  A}}{\langle \ten x,\ten x \rangle_{\ten A}} = \lambda(\ten x)^{-1}
\end{align*}
with the notation
\begin{align*}
  \ten B = \ten A^{-1} \E, \quad \ten T_\omega = \omega \ten A.
\end{align*}
Note that $\ten B$ is self-adjoint with respect to the $\ten A$-inner product. The iteration \eqref{eq:unpertiter} can be rewritten, with $\ten r_{\ten B}(\ten x) = \ten B \ten x - \mu(\ten x) \ten x$, as
\begin{align*}
 \Psi (\ten x, \omega) = \ten x + \frac{1}{\mu(\ten x) } T_\omega \ten r_{\ten B}(\ten x),
\end{align*}
corresponding to a maximization of $\mu$. Furthermore, we introduce
\begin{align*}
 \rho(\ten x)
 = \frac{1}{\mu(\ten x) } \frac{\| \ten r_{\ten B}(\ten x) \|_{\ten A}}{\|\ten x\|_{\ten A}} = \frac{\|\ten r(\ten x)\|_{\ten A^{-1}}}{\|\ten x\|_{\ten A}}\,.
\end{align*}
A sufficient condition for convergence then reads $\| \ten I - \ten T_\omega \|_{\ten A} \leq \eiter < 1$ for the unperturbed forward iteration.

\subsection{Perturbed iteration}

As the perturbed iteration is performed on the block-sparse tensor format (see \Cref{sec:blockstructure}), rank truncations are required to maintain numerical viability. For a perturbed residual $\operatorname{res}_{\etares}(\ten x)$, we consider the fixed point map
\[
 \tilde \Psi (\ten x, \omega, \etain, \etares) = \operatorname{trunc}_{\etain} \big( \ten x - \omega  \operatorname{res}_{\etares}(\ten x) \big).
\]
Corresponding to one iteration of \cref{alg:scheme}, it defines the perturbed inverse iteration as approximation to $\Psi$ under the use of certain truncations (see \Cref{truncdetail}) that satisfy the bounds
\begin{align*}
 \norm{\operatorname{trunc}_{\eta}(\ten x) - \ten x } \leq \eta \quad \text{and} \quad \norm{  \operatorname{res}_{\eta}(\ten x) - \ten r(\ten x) }  \leq \eta,
\end{align*}
for all $\eta > 0$ and $\ten x \in \mathcal{F}_N^K$. The details on the implementation of \cref{alg:scheme} including the choice of all parameters are given in \cref{alg:innerpinvit} below, based on the following analysis of the scheme. %
\begin{algorithm}[t!]
\DontPrintSemicolon
\SetKwInOut{Input}{input}\SetKwInOut{Output}{output}
\Input{tensor $\ten{x}^0$, constants $\teig, \enum$}
\Output{approximation to generalized eigenpair $(\lambda_1,\ten{u}_1)$ of $(\ten A, \ten E)$}
\BlankLine
\For{$n = 0,1,\ldots$}{
    set $\etares > 0$ such that $\norm{\operatorname{res}_{\etares}(\ten{x}^n) - \ten r(\ten{x}^n)}_{\ten A} \leq \enum \| \ten r(\ten{x}^n) \|_{\A^{-1}}$\tcp*[r]{\normalfont{} \Cref{thm:conv}(i)}
    set $\omega_\ast$ as minimizer of 
    $ \lambda_\ast := \min_{\omega \in \R}\ \lambda (\ten{x}^n - \omega  \operatorname{res}_{\etares}(\ten{x}^n))$\tcp*[r]{\normalfont{} \Cref{omegamin}}
    set $\etain > 0$ such that $\lambda (\operatorname{trunc}_{\etain}(\ten{x}^n - \omega_\ast  \operatorname{res}_{\etares}(\ten{x}^n))) \leq \lambda_\ast + \teig \, (\lambda(\ten{x}^n) - \lambda_\ast)$\;  
    $\ten{x}^{n+1} \gets \operatorname{trunc}_{\etain}(\ten{x}^n - \omega_\ast  \operatorname{res}_{\etares}(\ten{x}^n))$\tcp*[r]{\normalfont{} \Cref{thm:conv}(ii)}

}
\Return{$(\lambda(\ten{x}^n), \ten{x}^n)$}
\caption{Scheme for perturbed inverse (inner) iteration}
\label{alg:scheme}
\end{algorithm}%
We first restate a result for fixed $\omega_c = 1$ and $\etain = 0$ that corresponds to \cite[Theorem 3]{Rohwedder:11}. In essence, it states that provided small enough perturbations, the iteration converges just as the unperturbed version, but at a modified rate.

\begin{theorem}\label{thm:conv}
 Let $\ten x \in \mathcal{F}_N^K$, $\ten x \neq \ten 0$, and let $k \in \N_0$ be such that $\lambda_k \leq \lambda(\ten x) < \lambda_{k+1}$. Let $\etares$ be chosen such that for $\enum$ with $\varepsilon = \eiter + \enum < 1$, 
 \begin{align}\label{eq:enumcond}
  \|\tilde \Psi(\ten x, \omega_{\mathrm{c}}, 0, \etares) - \Psi(\ten x, \omega_{\mathrm{c}})\|_{\A} \leq \enum \| \ten r(\ten x) \|_{\A^{-1}}.
 \end{align}
 Then we have the following.
 \begin{enumerate}[{\rm\bf(i)}]
\item For $\ten x' := \tilde \Psi(\ten x, \omega_{\mathrm{c}},  0, \etares)$, either $\lambda(\ten x') < \lambda_k$ or $\lambda_k \leq \lambda(\ten x') < \lambda_{k+1}$, and in the latter case, 
 \begin{align}\label{eq:qred}
  \frac{\lambda(\ten x') - \lambda_k}{\lambda_{k+1} - \lambda(\ten x')} \leq q^2(\varepsilon, \lambda_k, \lambda_{k+1}) \frac{\lambda(\ten x) - \lambda_k}{\lambda_{k+1} - \lambda(\ten x)},
 \end{align}
where
\[ 
 q(\varepsilon, \lambda_k, \lambda_{k+1}) = 1 - (1-\varepsilon)(1 - \lambda_k / \lambda_{k+1}).
\]
\item In addition, let $\omega_\ast = \omega_\ast(\ten x, \etares)$ be the minimizer of
 \begin{equation*}
  \lambda_\ast := \min_{\omega \in \R}\ \lambda \bigl(\tilde \Psi(\ten x, \omega, 0, \etares)\bigr),
 \end{equation*}
and for some $\teig \in (0,1)$, let $\ten x' := \tilde \Psi(\ten x, \omega_\ast, \etain, \etares)$ with $\etain$ be chosen such that
\begin{align}\label{eq:modqcond}
 \lambda (\ten x') \leq \lambda_\ast + \teig \bigl(\lambda(\ten x) - \lambda_\ast \bigr).
\end{align}
Then with $ \tilde q^2 = (1-\teig)q^2(\varepsilon, \lambda_k, \lambda_{k+1}) +\teig \in (q^2(\varepsilon, \lambda_k, \lambda_{k+1}),1)$,
\begin{align*}
    \frac{\lambda(\ten x') - \lambda_k}{\lambda_{k+1} - \lambda(\ten x')} \leq \tilde q^2 \frac{\lambda(\ten x) - \lambda_k}{\lambda_{k+1} - \lambda(\ten x)}\,.
\end{align*}

\end{enumerate}
\end{theorem}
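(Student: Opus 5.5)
Part (i) is essentially \cite[Theorem 3]{Rohwedder:11} transcribed into the present notation, so the plan is to reduce to the Knyazev--Neymeyr estimate for exact PINVIT with a perturbed preconditioner. We work in the forward-iteration picture of \Cref{corfor}: $\ten B = \ten A^{-1}\E$ is $\ten A$-self-adjoint with eigenvalues $\mu_j = \lambda_j^{-1}$, and $\Psi(\ten x,\omega_{\mathrm c}) = \ten x + \mu(\ten x)^{-1}\ten T_{\omega_{\mathrm c}} \ten r_{\ten B}(\ten x)$. The first step is to interpret the perturbed iterate $\ten x' = \tilde\Psi(\ten x,\omega_{\mathrm c},0,\etares)$ as an \emph{exact} preconditioned step with a modified preconditioner. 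We write $\ten x' - \ten x = -\ten r(\ten x) + \ten d$ with $\norm{\ten d}_{\ten A} \le \enum\norm{\ten r(\ten x)}_{\A^{-1}}$ by \eqref{eq:enumcond}. Equivalently, $\ten x' = \ten x - \ten A^{-1}\ten r(\ten x) + \bigl((\ten I-\ten A)\ten A^{-1}\ten r(\ten x) + \ten d\bigr)$. Using $\norm{\ten I - \omega_{\mathrm c}\ten A}_{\ten A}\le\eiter$ and $\norm{\ten A^{-1}\ten r}_{\ten A} = \norm{\ten r}_{\A^{-1}}$, this deviation of $\ten x'$ from the exact inverse-iteration step $\ten x - \ten A^{-1}\ten r(\ten x)$ is bounded in $\ten A$-norm by $(\eiter+\enum)\norm{\ten A^{-1}\ten r(\ten x)}_{\ten A} = \varepsilon\,\norm{\ten A^{-1}\ten r(\ten x)}_{\ten A}$.

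This is exactly the hypothesis of the geometric PINVIT theorem of \cite{KN:03,KN:09}. The admissible set of new iterates is the ball of radius $\varepsilon\norm{\ten A^{-1}\ten r}_{\ten A}$ around the inverse-iteration image, and the sharp bound on the Rayleigh quotient over that ball is precisely \eqref{eq:qred}, with $q = 1-(1-\varepsilon)(1-\lambda_k/\lambda_{k+1})$. That theorem also gives the dichotomy that either $\lambda(\ten x')<\lambda_k$ or $\lambda(\ten x')\in[\lambda_k,\lambda_{k+1})$. Hence (i) follows once this rewriting is done carefully in the generalized $(\A,\E)$ setting; that setting is reduced to the standard one via the $\ten A$-self-adjoint operator $\ten B$, exactly as in \cite{Rohwedder:11}.

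For (ii), the plan is monotonicity plus convexity of the function $\phi(s) = (s-\lambda_k)/(\lambda_{k+1}-s)$, which is increasing on $[\lambda_k,\lambda_{k+1})$. Since $\omega_\ast$ minimizes $\lambda(\tilde\Psi(\ten x,\omega,0,\etares))$ over $\omega$, we get $\lambda_\ast \le \lambda(\tilde\Psi(\ten x,\omega_{\mathrm c},0,\etares))$. If $\lambda_\ast\ge\lambda_k$, then (i) gives $\phi(\lambda_\ast)\le q^2\phi(\lambda(\ten x))$. Condition \eqref{eq:modqcond} states that $\lambda(\ten x')$ lies below the convex combination $(1-\teig)\lambda_\ast + \teig\lambda(\ten x)$.

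The main obstacle is passing from this combination on the $\lambda$-scale to the combination $\tilde q^2 = (1-\teig)q^2+\teig$ on the $\phi$-scale. Since $\phi$ is convex (its second derivative is positive on the interval), $\phi((1-\teig)a+\teig b)\le(1-\teig)\phi(a)+\teig\phi(b)$. Applying this with $a = \lambda_\ast$ and $b = \lambda(\ten x)$ gives $\phi(\lambda(\ten x'))\le((1-\teig)q^2+\teig)\phi(\lambda(\ten x))$. The degenerate cases also need care. If $\lambda_\ast<\lambda_k$ or $\lambda(\ten x')<\lambda_k$, then $\phi(\lambda(\ten x'))$ is either negative, or bounded via convexity using $\phi(\lambda_\ast)<0\le q^2\phi(\lambda(\ten x))$, so the claimed inequality still holds. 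Finally, $\lambda(\ten x')<\lambda_{k+1}$ holds because $\lambda(\ten x')\le\lambda(\ten x)$.
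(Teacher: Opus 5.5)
Your argument is correct. For (i) it is the paper's route: the paper simply cites \cite[Theorem 3]{Rohwedder:11}. Your reduction is exactly what that citation contains, namely that the perturbed step lies in the $\ten A$-ball of radius $(\eiter+\enum)\norm{\ten r(\ten x)}_{\A^{-1}}$ around the inverse-iteration image $\ten x-\A^{-1}\ten r(\ten x)=\lambda(\ten x)\A^{-1}\E\ten x$, and the Knyazev--Neymeyr estimate holds on that ball.

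For (ii) you take a different and somewhat cleaner route. The paper proves an auxiliary algebraic lemma (\Cref{perturbedq}) by cross-multiplying numerators and denominators. That argument needs the ordering $\lambda(\ten y)\le\lambda(\ten x)$. The paper then handles the case $\lambda_\ast<\lambda_k$ separately by replacing $\lambda_\ast$ with $\lambda_k$.

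You instead use that $\phi(s)=-1+(\lambda_{k+1}-\lambda_k)/(\lambda_{k+1}-s)$ is increasing and convex. Together with monotonicity this gives $\phi(\lambda(\ten x'))\le(1-\teig)\phi(\lambda_\ast)+\teig\,\phi(\lambda(\ten x))$ directly. It needs no ordering hypothesis, and it covers the degenerate case at once, because there $\phi(\lambda_\ast)<0\le q^2\phi(\lambda(\ten x))$.

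Two small points should be made explicit:
\begin{itemize}
\item State that monotonicity and convexity of $\phi$ hold on all of $(-\infty,\lambda_{k+1})$, not only on $[\lambda_k,\lambda_{k+1})$, since $\lambda_\ast$ and $\lambda(\ten x')$ may fall below $\lambda_k$.
\item Note that $\lambda_\ast\le\lambda(\ten x)$ follows from taking $\omega=0$ in the minimization. This yields $\lambda(\ten x')\le\lambda(\ten x)<\lambda_{k+1}$, as you claim.
\end{itemize}
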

\begin{proof}
 The statement \textbf{(i)} follows directly from \cite[Theorem 3]{Rohwedder:11}, subject to the same adaptations described in \Cref{corfor}, for $\ten P^{-1} = \omega_c \ten I$.

 Concerning \textbf{(ii)}, we first observe that by optimality of $\omega_\ast$, we have $\lambda_\ast \leq \lambda (\tilde \Psi(\ten x, \omega_c, 0, \etares))$, and thus \eqref{eq:qred} holds true for $\lambda_\ast$ as well. Assume that $\lambda_k \leq \lambda_\ast$. Then the bound with modified factor $\tilde q$ follows directly by \Cref{perturbedq} for $\ten y = \tilde \Psi(\ten x, \omega_\ast, 0, \etares)$ and $\ten z = \ten x'$. Conversely, if $\lambda_\ast < \lambda_k$, then either we still have $\lambda(\ten x') < \lambda_k$, or otherwise $\lambda(\ten x') \geq \lambda_k$. It the latter case, since $\lambda (\ten x') \leq \lambda_k + t (\lambda(\ten x) - \lambda_k)$, \Cref{perturbedq} can be applied with $\lambda(\ten y) = \lambda_k$.
\end{proof}

In order to determine the optimal step size $\omega_\ast$ in \Cref{thm:conv}(ii)
for given $\ten x$ and $\operatorname{res}_{\etares}(\ten x)$, we solve the corresponding reduced two-dimensional eigenvalue problem
\begin{equation}\label{eq:reducedevp}
 \langle \A \ten v, \ten u \rangle = \lambda_\ast \langle\E \ten v, \ten u \rangle \quad \ \forall \ten u \in \mathrm{span}\{\ten x, \operatorname{res}_{\etares}(\ten x)\}
\end{equation}
for the eigenvector $\ten v_* \in \mathrm{span}\{\ten x, \operatorname{res}_{\etares}(\ten x)\}$ corresponding to the lower eigenvalue $\lambda_\ast$. 

\begin{remark}\label{omegamin}
For numerical stability, we solve \eqref{eq:reducedevp} in terms of the normalized basis vectors $\ten z_1 = \ten S \ten x / \norm{\ten S  \ten x}$ and $\ten z_2 = \ten S  \operatorname{res}_{\etares}(\ten x) / \norm{\ten S  \operatorname{res}_{\etares}(\ten x)}$, that is, we determine $v \in \R^2$ that is the eigenvector corresponding to the lower eigenvalue of
\[   \begin{pmatrix}
\langle \ten H_\gamma \ten z_1, \ten z_1\rangle & \langle \ten H_\gamma \ten z_1, \ten z_2 \rangle \\
 \langle \ten H_\gamma \ten z_1, \ten z_2 \rangle & \langle \ten H_\gamma \ten z_2, \ten z_2 \rangle
\end{pmatrix}
v 
=
\lambda_\ast
\begin{pmatrix}
1 & \langle \ten z_1, \ten z_2 \rangle \\
 \langle \ten z_1, \ten z_2 \rangle & 1
\end{pmatrix}
 v ,
\]
where $\langle \ten H_\gamma \ten z_1, \ten z_1 \rangle =\lambda(\ten x)$, $\langle \ten H_\gamma \ten z_2, \ten z_2 \rangle=\lambda(\operatorname{res}_{\etares}(\ten x) )$ with $\lambda$ as in \eqref{eq:rayleigh1}. We then obtain the optimal step size as
\[
\omega_\ast = - \frac{\norm{\ten S  \operatorname{res}_{\etares}(\ten x)} v_2}{\norm{\ten S  \ten x} v_1}\,.
\]
\end{remark}

\subsection{Error-controlled parameter adaptation}\label{truncdetail}

We now turn to practically feasible ways of ensuring the conditions \eqref{eq:enumcond} and \eqref{eq:modqcond} of \Cref{thm:conv} for convergence of the truncated iterations.

\subsubsection{Adaptive residual rank truncation}\label{truncr}

The following result leads to a practical strategy for inexact residual evaluation with prescribed relative error. Here we adapt the common approach, as used also in \cite{Rohwedder:11}, of reducing directly controllable absolute errors below a certain threshold.

\begin{lemma}\label{lem:truncr}
 With the notation of \Cref{thm:conv}, let $\etares$ be chosen such that $\etares \leq  \zetares \| \ten r(\ten x) \|$ with
 $\zetares \leq (1+c)^{-1} \enum$ for $\enum > 0$ with $\eiter + \enum < 1$. Then the bound \eqref{eq:enumcond} holds true.
\end{lemma}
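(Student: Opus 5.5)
With $\omega_{\mathrm c}=1$ and $\etain=0$, the truncation in $\tilde\Psi$ is trivial. The difference in \eqref{eq:enumcond} therefore reduces to the residual perturbation itself:
\[
\tilde \Psi(\ten x, 1, 0, \etares) - \Psi(\ten x, 1) = \ten r(\ten x) - \operatorname{res}_{\etares}(\ten x).
\]
So the plan is to bound $\norm{\operatorname{res}_{\etares}(\ten x) - \ten r(\ten x)}_{\A}$ by $\enum \norm{\ten r(\ten x)}_{\A^{-1}}$. I would do this by passing between the $\A$-norm, the Euclidean norm and the $\A^{-1}$-norm via the spectral equivalence \eqref{specequivA} of \Cref{cor:lrprecond}.

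The first step uses the upper bound $\langle \A \ten v,\ten v\rangle \le (1+c)\langle \ten v,\ten v\rangle$ on $\mathcal F_N^K$. Together with the defining property $\norm{\operatorname{res}_{\etares}(\ten x) - \ten r(\ten x)} \le \etares$, it gives
\[
\norm{\operatorname{res}_{\etares}(\ten x) - \ten r(\ten x)}_{\A} \le \sqrt{1+c}\,\etares .
\]
All vectors involved lie in $\mathcal F_N^K$, since $\A$, $\E$ and the truncations preserve particle number. The second step inverts the same upper bound: $\A \preceq (1+c)\ten I$ implies $\A^{-1}\succeq (1+c)^{-1}\ten I$, hence $\norm{\ten r(\ten x)} \le \sqrt{1+c}\,\norm{\ten r(\ten x)}_{\A^{-1}}$.

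Chaining these with the assumptions $\etares \le \zetares\norm{\ten r(\ten x)}$ and $\zetares \le (1+c)^{-1}\enum$ yields
\[
\norm{\operatorname{res}_{\etares}(\ten x) - \ten r(\ten x)}_{\A}
\le \sqrt{1+c}\,\zetares \norm{\ten r(\ten x)}
\le (1+c)\,\zetares \norm{\ten r(\ten x)}_{\A^{-1}}
\le \enum \norm{\ten r(\ten x)}_{\A^{-1}},
\]
which is \eqref{eq:enumcond}. The condition $\eiter+\enum<1$ is not used in the estimate itself; it only makes the conclusion usable in \Cref{thm:conv}.

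The only point needing care is the norm in which the residual error is measured: Euclidean, as in the bound stated at the start of the perturbed-iteration subsection, versus the $\A$-norm used in \Cref{alg:scheme}. I would treat the Euclidean case as above. If the error is already controlled in the $\A$-norm, the first conversion is unnecessary and the argument only gets better, losing one factor $\sqrt{1+c}$. Otherwise the proof is a direct chain of norm equivalences, and I expect no real obstacle.
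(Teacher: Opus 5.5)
Your proposal is correct and matches the paper's proof: you reduce the difference to $\operatorname{res}_{\etares}(\ten x) - \ten r(\ten x)$, bound it by $\etares \le \zetares \norm{\ten r(\ten x)}$ in the Euclidean norm, and convert via the chain $\norm{\cdot}_{\A} \le \sqrt{1+c}\,\norm{\cdot} \le (1+c)\,\norm{\cdot}_{\A^{-1}}$, which follows from \eqref{specequivA}. Your remarks that $\eiter+\enum<1$ plays no role in the estimate, and that an $\A$-norm residual bound would only improve the constant, are also correct.
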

\begin{proof}
Since $\omega_{\mathrm{c}} = 1$, the introduced perturbation directly simplifies to
\begin{align*}
 & \ \| \tilde \Psi(\ten x, \omega_{\mathrm{c}}, 0, \etares) - \Psi(\ten x, \omega_{\mathrm{c}})\|
 = \norm{ \operatorname{res}_{\etares}(\ten x)  -  \ten r(\ten x)  }  \leq \etares \leq  \zetares \| \ten r(\ten x) \|.
\end{align*}
Since $\| \cdot \|_{\A} \leq \sqrt{1+c}  \| \cdot \| \leq (1+c)  \| \cdot \|_{\A^{-1}}$ by \eqref{specequivA}, assumption \eqref{eq:enumcond} of \Cref{thm:conv} is thus fulfilled if $\zetares \leq (1+c)^{-1} \enum$ is satisfied.
\end{proof}

An efficient approximation of $\operatorname{res}_{\etares}(\ten x)$ ensuring $\norm{ \operatorname{res}_{\etares}(\ten x) - \ten r(\ten x) } \leq \etares$ as well as $\etares \leq \zetares \| \ten r(\ten x) \|$ for given $\etares \geq 0$, without calculation of $\ten r(\ten x)$, can be obtained as follows. 
We assume $\etares$ to be given. With the exponential sum approximations obtained in \Cref{sec:expsum}, the residual takes the form
\begin{align*}
 \ten r(\ten x) = \sum_{k,k' = M^-}^{M^+} \ten S_k \bigl( \ten H_\gamma - \lambda(\ten x) \ten I \bigr) \ten S_{k'} \ten x.
\end{align*}
By first computing the norms of these single terms, the summation can be sorted as
\begin{align*}
 \ten r(\ten x) = \sum_{j = 1}^{J} \ten s_j, \quad J := (M^+ - M^- + 1)^2,
\end{align*}
with $\|\ten s_j\| \leq \|\ten s_{j+1}\|$ for $j = 1,\ldots,J-1$. In order to reduce the computational complexity, the first $J_0$ terms that sum up to at most $\etares/3$ are omitted. With $\ten y_{J_0} := \ten 0$ and
\[
		\ten y_{j} = \operatorname{trunc}_{\varepsilon_j} (\ten y_{j-1} + \ten s_j), \quad j = J_0 + 1, \ldots, J, \quad J_0 := \max \biggl\{ \ell \in \N_0 \colon \sum_{j = 1}^{\ell} \|\ten s_j\| \leq \frac{\etares}3 \biggr\},
		\]
we then set $\operatorname{res}_{\etares}(\ten x) = \operatorname{trunc}_{\etares/3} (\ten y_J)$. In order to maintain the bound prescribed by $\etares$, the single remaining tolerances are set as
\[
   \varepsilon_j = \frac{ \etares  \norm{ \ten s_j }}{ 3 \sum_{i = J_0 + 1}^J \norm{ \ten s_i } }.
\]
Second, in order to maintain the relative bound $\etares  \leq \zetares \| \ten r(\ten x) \|$ for a given $\zetares$, we apply the following strategy.
Starting from an estimate
\begin{align}\label{startetar}
 \etares = \frac{4}{5}  \zetares \sum_{j = 1}^{J} \|\ten s_j\|, \qquad \text{where $\frac{5}{4} \etares \geq \zetares \| \ten r(\ten x) \|$,}
\end{align}
which will avoid unnecessarily large choices, the approximate residual $\operatorname{res}_{\etares}(\ten x)$ is computed as above.
As the norm of the unperturbed residual can be estimated via
\begin{align}\label{rnormestimate}
  \| \ten r(\ten x) \| \geq  \norm{ \operatorname{res}_{\etares}(\ten x)}  - \norm{ \operatorname{res}_{\etares}(\ten x) - \ten{r}(\ten x) }  \geq   \norm{ \operatorname{res}_{\etares}(\ten x)} - \etares, 
\end{align}
the result is then accepted if $\zetares \big(\norm{ \operatorname{res}_{\etares}(\ten x)} - \etares \big) \geq \etares$, or equivalently, if $\norm{ \operatorname{res}_{\etares}(\ten x) }  \geq (1+\zetares^{-1}) \etares$. Otherwise, we replace $\etares$ by $\frac45 \etares$ and repeat the procedure until it necessarily terminates after finitely many (and empirically one or very few) steps. With the result, one further obtains the upper bound $\| \ten r(\ten x) \| \leq \norm{ \operatorname{res}_{\etares}(\ten x)}  + \etares$ as well as
\begin{align} \label{rhobound}
\rho(\ten x) = \frac{\|\ten r(\ten x)\|_{\ten A^{-1}}}{\|\ten x\|_{\ten A}} \leq 
(1-c)^{-1/2} \frac{\|\ten r(\ten x)\|}{\|\ten x\|_{\A}} \leq (1-c)^{-1/2} \frac{\norm{ \operatorname{res}_{\etares}(\ten x) } + \etares}{\|\ten x\|_{\A}}
\end{align}

\subsubsection{Adaptive truncation of iterates}\label{trunciter}

 We provide an a priori bound for the Rayleigh quotient under perturbation in the forward setting for the scalar product $\langle \cdot , \cdot \rangle_{\ten A}$ and operator $\ten B$ as defined in \Cref{corfor}.

\begin{lemma}
With the notation of \Cref{thm:conv}, for $\ten x_\ast := \tilde \Psi(\ten x, \omega_\ast, 0, \etares)$, let
$\teig \in (0,1)$ and $\Delta_{\ten A} > 0$ be chosen such that
\begin{align}\label{DeltaAcond}
\sqrt{1-\Delta_{\ten A}^2} - \rho(\ten x_\ast) \Delta_{\ten A} \geq \sqrt{ \frac{\lambda_\ast}{\lambda_\ast + \teig (\lambda(\ten x) - \lambda_\ast)} }
\end{align}
and let
\begin{equation}\label{eq:etaxdef}
 \etain \leq \frac{ \Delta_{\ten A}  \|\ten x_\ast\|_{\A} }{ \sqrt{1 + c}} .
\end{equation}
Then the condition \Cref{eq:modqcond} is fulfilled with this value of $\teig$.
\end{lemma}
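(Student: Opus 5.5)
We work in the forward setting of \Cref{corfor}, with $\mu = \lambda^{-1}$ and the $\ten A$-inner product, and set $\ten x' = \operatorname{trunc}_{\etain}(\ten x_\ast)$, $\ten e = \ten x' - \ten x_\ast$. Condition \eqref{eq:modqcond} is equivalent to
\[
\mu(\ten x') \geq \frac{\mu(\ten x_\ast)\,\lambda_\ast}{\lambda_\ast + \teig(\lambda(\ten x)-\lambda_\ast)},
\]
using $\lambda(\ten x_\ast)=\lambda_\ast$. So it suffices to show $\mu(\ten x')/\mu(\ten x_\ast) \geq \bigl(\sqrt{1-\Delta_{\ten A}^2} - \rho(\ten x_\ast)\Delta_{\ten A}\bigr)^2$ and then square \eqref{DeltaAcond}. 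Both sides of \eqref{DeltaAcond} are nonnegative, since its right-hand side is positive, so squaring is legitimate.

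\textbf{Step 1 (size of the perturbation).} First convert the Euclidean truncation bound into an $\ten A$-norm bound. By \eqref{specequivA},
\[
\norm{\ten e}_{\ten A} \leq \sqrt{1+c}\,\norm{\ten e} \leq \sqrt{1+c}\,\etain \leq \Delta_{\ten A}\norm{\ten x_\ast}_{\ten A}
\]
by \eqref{eq:etaxdef}. Hence the relative perturbation $\delta := \norm{\ten e}_{\ten A}/\norm{\ten x_\ast}_{\ten A}$ satisfies $\delta \leq \Delta_{\ten A}$.

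\textbf{Step 2 (Rayleigh quotient under perturbation).} Normalize $\norm{\ten x_\ast}_{\ten A}=1$; the quantity $\mu$ is scale invariant. Use the $\ten B$-seminorm $\norm{\ten v}_{\ten B} = \langle \ten B\ten v,\ten v\rangle_{\ten A}^{1/2}$, which is legitimate since $\ten B = \ten A^{-1}\E$ is $\ten A$-self-adjoint and positive. Then
\[
\mu(\ten x') = \frac{\norm{\ten x_\ast+\ten e}_{\ten B}^2}{\norm{\ten x_\ast+\ten e}_{\ten A}^2}.
\]
The key observation concerns the decomposition of $\ten B\ten x_\ast$. Its component along $\ten x_\ast$ is $\mu(\ten x_\ast)\ten x_\ast$, and its $\ten A$-orthogonal complement is $\ten r_{\ten B}(\ten x_\ast)$, whose norm equals $\mu(\ten x_\ast)\rho(\ten x_\ast)$. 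Decompose $\ten e = a\ten x_\ast + \ten f$ with $\ten f \perp_{\ten A} \ten x_\ast$, so that $a^2 + \norm{\ten f}_{\ten A}^2 = \delta^2$. Then, by Cauchy--Schwarz,
\[
\frac{\langle \ten B\ten x_\ast, \ten x_\ast+\ten e\rangle_{\ten A}}{\mu(\ten x_\ast)} = 1 + a + \langle \ten r_{\ten B}(\ten x_\ast), \ten f\rangle_{\ten A}/\mu(\ten x_\ast) \geq 1 + a - \rho(\ten x_\ast)\norm{\ten f}_{\ten A}.
\]
Applying Cauchy--Schwarz in the $\ten B$-inner product gives
\[
\norm{\ten x_\ast+\ten e}_{\ten B} \geq \frac{\langle \ten x_\ast, \ten x_\ast+\ten e\rangle_{\ten B}}{\norm{\ten x_\ast}_{\ten B}},
\]
and therefore
\[
\frac{\mu(\ten x')}{\mu(\ten x_\ast)} \geq \frac{\bigl(1+a-\rho(\ten x_\ast)\norm{\ten f}_{\ten A}\bigr)^2}{(1+a)^2 + \norm{\ten f}_{\ten A}^2}.
\]
This bound requires $1 + a - \rho(\ten x_\ast)\norm{\ten f}_{\ten A} \geq 0$, which holds under \eqref{DeltaAcond}.

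\textbf{Step 3 (worst case, the main obstacle).} It remains to minimize the right-hand side over $(a, \norm{\ten f}_{\ten A})$ subject to $a^2+\norm{\ten f}_{\ten A}^2 \leq \Delta_{\ten A}^2$, and to show the minimum is at least $\bigl(\sqrt{1-\Delta_{\ten A}^2} - \rho\Delta_{\ten A}\bigr)^2$. Geometrically, the expression is the squared cosine-type quantity for the direction $\ten x_\ast + \ten e$ relative to $\ten x_\ast$, shifted by the residual term. The worst case should correspond to the perturbed vector being tangent to the ball of radius $\Delta_{\ten A}$. There the angle satisfies $\sin\theta = \Delta_{\ten A}$, and the expression becomes $(\cos\theta - \rho\sin\theta)^2$ after rescaling by $\norm{\ten x_\ast+\ten e}_{\ten A}$.

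Concretely, write $\ten x_\ast+\ten e = s(\cos\theta\,\ten x_\ast + \sin\theta\,\hat{\ten f})$ with $\hat{\ten f}$ a unit vector $\ten A$-orthogonal to $\ten x_\ast$. The ratio is scale invariant and becomes $\ge(\cos\theta - \rho\sin\theta)^2$. This quantity is decreasing in $\theta\in[0,\pi/2)$ while nonnegative. Moreover, every point of the ball of radius $\Delta_{\ten A}<1$ around $\ten x_\ast$ has $\sin\theta \leq \Delta_{\ten A}$. Hence
\[
\frac{\mu(\ten x')}{\mu(\ten x_\ast)} \geq \bigl(\sqrt{1-\Delta_{\ten A}^2} - \rho(\ten x_\ast)\Delta_{\ten A}\bigr)^2 \geq \frac{\lambda_\ast}{\lambda_\ast + \teig(\lambda(\ten x)-\lambda_\ast)},
\]
which gives \eqref{eq:modqcond}. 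The delicate points are handling the sign condition and checking that \eqref{DeltaAcond} forces $\Delta_{\ten A}<1$, so that $\theta<\pi/2$.
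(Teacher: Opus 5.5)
Your proof is correct. Your first step, converting the truncation bound into $\norm{\ten x'-\ten x_\ast}_{\ten A}\le\Delta_{\ten A}\norm{\ten x_\ast}_{\ten A}$ via \eqref{specequivA} and \eqref{eq:etaxdef}, is exactly the paper's.

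The two arguments differ in the core estimate. The paper notes $\sin\angle_{\ten A}(\ten x',\ten x_\ast)\le\Delta_{\ten A}$ and then invokes \Cref{thm:pertbound} with $\ten v=\ten x_\ast$, $\ten p=\ten x'$, $L=0$, using $\varrho(\ten x_\ast)/\mu(\ten x_\ast)=\rho(\ten x_\ast)$. That theorem is proved in the appendix by reducing to the two-dimensional subspace $\operatorname{span}\{\ten v,\ten p\}$, writing everything in terms of its two Ritz values (\Cref{lem:temple,prop:X}), and then applying the monotonicity of \Cref{lem:gfun}.

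You instead derive the same bound inline. You use one Cauchy--Schwarz inequality in the inner product $\langle \ten B\cdot,\cdot\rangle_{\ten A}=\langle \ten E\cdot,\cdot\rangle$, combined with the $\ten A$-orthogonal splitting $\ten B\ten x_\ast=\mu(\ten x_\ast)\ten x_\ast+\ten r_{\ten B}(\ten x_\ast)$. Writing $(1+a,\norm{\ten f}_{\ten A})=s(\cos\theta,\sin\theta)$, with $\theta$ the $\ten A$-angle between $\ten x_\ast$ and $\ten x'$, gives exactly the $L=0$ case of \eqref{eq:thm:pertbound}.

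\emph{What each route buys.} Yours is shorter and self-contained. It also extends verbatim to general $L$ by applying Cauchy--Schwarz to the positive semidefinite form $\langle(\ten B-L\ten I)\cdot,\cdot\rangle_{\ten A}$. The paper's route costs more but produces an exact identity inside the two-dimensional subspace. That identity underlies its sharpness remark and is reused for the a posteriori angle bounds (\Cref{sinbound,qbound}).

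\emph{The two points you flag as delicate are harmless.} The sign condition is not circular: $1+a-\rho(\ten x_\ast)\norm{\ten f}_{\ten A}=s(\cos\theta-\rho\sin\theta)\ge s\bigl(\sqrt{1-\Delta_{\ten A}^2}-\rho\Delta_{\ten A}\bigr)>0$ follows from the angle estimate alone. For $\Delta_{\ten A}<1$: condition \eqref{DeltaAcond} already forces $\Delta_{\ten A}\le 1$ for the square root to be real. The value $\Delta_{\ten A}=1$ is then excluded, because it would make the left side equal to $-\rho(\ten x_\ast)\le 0$, while the right side is positive since $\lambda_\ast>0$.
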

\begin{proof}
By the properties of $\ten x' = \mathrm{trunc}_{\etain}(\ten x_\ast)$,
\begin{equation}\label{eq:anglebound}
\|\ten x'-\ten x_\ast\|_{\ten A} \leq \sqrt{1+c} \|\ten x'-\ten x_\ast\| \leq
\etain \sqrt{1+c} \leq \Delta_{\ten A} \|\ten x_\ast\|_{\ten A},
\end{equation}
and $\Delta_{\ten A}$ satisfies \Cref{DeltaAcond}. 
We now apply \Cref{thm:pertbound} with $L = 0$. 
 To this end, we note that \eqref{eq:anglebound} implies $\sin\angle_{\ten A}(\ten x',\ten x_\ast) \leq \Delta_{\ten A}$. Substitution considering 
 \[ \cos \angle_{\ten A}(\ten x',\ten x_\ast) \geq \cos(\arcsin(\Delta_{\ten A})) = \sqrt{1 - \Delta_{\ten A}^2}, \] 
 and taking into account that $\lambda(\ten x)^{-1} = \mu_{\ten B}(\ten x) \geq 0$, then yields 
\begin{align}\label{ribound}
 \frac{\lambda(\ten x_\ast)}{\lambda(\ten x')} \geq  \left( \max \left\{ \sqrt{1-\Delta_{\ten A}^2} - \rho(\ten x_\ast) \Delta_{\ten A}, 0 \right\} \right)^2 .
\end{align}
We obtain the desired bound \Cref{eq:modqcond} with \Cref{ribound},
 \begin{equation*}
  \lambda(\ten x') \leq  \left(  \sqrt{1-\Delta_{\ten A}^2} - \rho(\ten x_\ast) \Delta_{\ten A}   \right)^{-2} \lambda_\ast \leq \lambda_\ast + \teig (\lambda(\ten x) - \lambda_\ast). \qedhere
 \end{equation*}
\end{proof}

Note that since $\lambda(\ten x) \geq \lambda_*$, a choice of $\teig$ and $\Delta_{\ten A}$ as in \eqref{DeltaAcond} is always possible. 
As $\rho(\ten x_\ast)$ is not directly available but through the upper bound \Cref{rhobound}, one can ensure \Cref{DeltaAcond} via
\begin{align*}
  \sqrt{1-\Delta_{\ten A}^2} - \rho(\ten x_\ast) \Delta_{\ten A} \geq 
  \sqrt{1-\Delta_{\ten A}^2} -  \frac{\|\operatorname{res}_{\etares}(\ten x_\ast)\| + \etares}{ \sqrt{ 1 - c} \|\ten x_\ast\|_{\A}} \Delta_{\ten A} =\sqrt{ \frac{\lambda_\ast}{\lambda_\ast + \teig (\lambda(\ten x) - \lambda_\ast)} }.
\end{align*}
As the desired perturbation bound for the Rayleigh quotient \Cref{eq:modqcond}, however, can be verified a posteriori without computations other than those already required, less conservative tolerances may be applied, such as by assuming $\rho(\ten x_\ast) \leq \|\operatorname{res}_{\etares}(\ten x)\|$.

\subsection{A posteriori error control}\label{sec:aposerrcon}
In the following, for the eigenvector $\ten{u}_1$ with $\|\ten u_1\|_{\A} = 1$ corresponding to the lowest eigenvalue $\lambda_1$ (assumed here to be simple), we always assume without loss of generality that the orientation of $\ten{u}_1$ is the same as that of any arbitrary $\ten x$\ it is compared to, that is, $ \langle \ten u_1, \ten x \rangle_{\ten A} > 0$. In this section, we derive a posteriori error bounds under the assumption that a lower bound for the gap between the sought eigenvalue $\lambda_1$ and the second-lowest one $\lambda_2$ is known.
Specificially, we assume that we know a constant $\delta > 1$ with
\begin{align*}
 \delta \geq \delta_0 := \frac{\lambda_2}{\lambda_2 - \lambda_1} \in (1,\infty).
\end{align*}
The case that is most favorable in the following estimates is $\delta$ and $\delta_0$ both being close to one, which in view of \eqref{eq:eigenvalueordering} corresponds to the largest possible gap.

\begin{prop}\label{raylangleb} 
For all $\ten x \in \mathcal{F}_N^K$, we have
\begin{align}\label{eq:sinbound}
 \left( 1 - \frac{\lambda_1}{\lambda(\ten x)} \right) \leq \sin^2 \angle_{\ten A}(\ten{u}_1,\ten x) \leq 
   \delta \left( 1 - \frac{\lambda_1}{\lambda(\ten x)} \right),
\end{align}
as well as
\begin{align}\label{eq:2ndsinbound}
  \sin \angle_{\ten A}(\ten{u}_1,\ten x) \leq \left\|\ten{u}_1 - \frac{\ten x}{\|\ten x\|_{\A}} \right\|_{\A} = 2 \sin \left(\frac{1}{2} \angle_{\A}(\ten{u}_1,\ten x) \right)  \leq \sqrt{2} \sin \angle_{\ten A}(\ten{u}_1,\ten x).
\end{align}
\end{prop}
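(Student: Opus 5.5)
The plan is to work in the $\ten A$-inner product, where the problem becomes a standard symmetric eigenvalue problem for $\ten B = \ten A^{-1}\E$. This operator is self-adjoint with respect to $\langle\cdot,\cdot\rangle_{\ten A}$, and its eigenvalues are $\mu_i = \lambda_i^{-1}$, with $\ten A$-orthonormal eigenvectors $\ten u_i$ normalized by $\norm{\ten u_i}_{\ten A}=1$.

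\textbf{Step 1: expansion.} Since both sides of \eqref{eq:sinbound} are scale invariant, I first normalize $\norm{\ten x}_{\ten A}=1$. I then expand $\ten x = \sum_i \xi_i \ten u_i$ with $\sum_i \xi_i^2 = 1$, so that $\cos^2\angle_{\ten A}(\ten u_1,\ten x)=\xi_1^2$ and $\sin^2\angle_{\ten A}(\ten u_1,\ten x) = s^2 := \sum_{i\ge 2}\xi_i^2$. Since $\langle \ten A\ten x,\ten x\rangle=\sum_i \xi_i^2$ and $\langle\E\ten x,\ten x\rangle=\sum_i \xi_i^2\lambda_i^{-1}$, we get
\[
\frac{1}{\lambda(\ten x)} = \sum_i \frac{\xi_i^2}{\lambda_i},
\qquad\text{hence}\qquad
1-\frac{\lambda_1}{\lambda(\ten x)} = \sum_{i\ge 2}\xi_i^2\Bigl(1-\frac{\lambda_1}{\lambda_i}\Bigr).
\]

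\textbf{Step 2: the two bounds in \eqref{eq:sinbound}.} Each coefficient satisfies $1-\lambda_1/\lambda_i \le 1$, which gives the lower bound $1-\lambda_1/\lambda(\ten x)\le s^2$. For $i\ge2$, monotonicity in $\lambda_i$ gives $1-\lambda_1/\lambda_i \ge 1-\lambda_1/\lambda_2 = \delta_0^{-1}\ge \delta^{-1}$. Therefore $1-\lambda_1/\lambda(\ten x)\ge s^2/\delta$, which is the upper bound. This requires $\lambda_i>0$, which holds by \eqref{eq:eigenvalueordering}.

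\textbf{Step 3: \eqref{eq:2ndsinbound}.} This is elementary plane geometry in the $\ten A$-inner product. With $\theta=\angle_{\ten A}(\ten u_1,\ten x)\in[0,\pi/2]$ (acute by the orientation convention $\langle\ten u_1,\ten x\rangle_{\ten A}>0$) and unit vectors $\ten u_1$ and $\hat{\ten x}=\ten x/\norm{\ten x}_{\ten A}$, we have
\[
\norm{\ten u_1-\hat{\ten x}}_{\ten A}^2 = 2-2\cos\theta = 4\sin^2(\theta/2).
\]
The two remaining inequalities are then scalar facts on $[0,\pi/2]$. First, $\sin\theta = 2\sin(\theta/2)\cos(\theta/2)\le 2\sin(\theta/2)$. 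Second, $2\sin(\theta/2) = \sin\theta/\cos(\theta/2)\le \sqrt2\sin\theta$, because $\cos(\theta/2)\ge\cos(\pi/4)=1/\sqrt2$. That last inequality is the only place where acuteness is used.

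I expect no real obstacle here. The only points needing care are the correct identification of $\lambda(\ten x)^{-1}$ as a convex combination of the $\lambda_i^{-1}$ in the $\ten A$-orthonormal eigenbasis, and the use of the orientation convention to restrict $\theta$ to $[0,\pi/2]$ for the factor $\sqrt2$.
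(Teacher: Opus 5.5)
Your proof is correct, but it takes a different route from the paper.

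The paper does not expand in an eigenbasis. It gets the lower bound in \eqref{eq:sinbound} from the appendix perturbation estimate \Cref{thm:pertbound}, applied with $\ten v = \ten u_1$ (zero residual), $\ten p = \ten x$ and $L = 0$, which yields $\lambda_1/\lambda(\ten x) \geq \cos^2\angle_{\A}(\ten u_1,\ten x)$. It gets the upper bound from \Cref{sinbound}: restrict to the two-dimensional space $\mathrm{span}\{\ten u_1,\ten x\}$, write $\sin^2\angle_{\A}(\ten u_1,\ten x) = (1-q_0)/(1-\gamma_0)$ exactly in terms of the Ritz values there, and use that the smaller Ritz value is at most $\lambda_2^{-1}$. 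For \eqref{eq:2ndsinbound} the paper proves the left inequality via $\sin\angle_{\A}(\ten u_1,\ten x) = \min_{\alpha}\norm{\ten u_1-\alpha\ten x}_{\A}$ instead of the half-angle identity; this difference is cosmetic.

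Your expansion is essentially the unrolled form of that two-dimensional reduction, since the smaller Ritz value is the average of the $\lambda_i^{-1}$, $i\ge 2$, with weights $\xi_i^2/s^2$. Your version is shorter and self-contained. It also shows plainly that positivity of the $\lambda_i$ plays the role of $L=0$ and that the spectral gap enters only through $\delta$.

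The paper's route pays off by reuse. The same Ritz-value machinery, together with \Cref{lem:temple}, produces the residual-based estimates of \Cref{qbound} and \Cref{thm:pertbound} needed for the a posteriori control. It also only works with Rayleigh quotients on two-dimensional subspaces, not with a full eigen-expansion, though such an expansion is available in the finite-dimensional space $\mathcal{F}_N^K$.
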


\begin{proof}
 The first inequality in \eqref{eq:sinbound} follows, for instance, from \cref{thm:pertbound} with $\ten p = \ten x$, $\ten v= \ten u_1$ and  $L = 0$; the second follows directly from \cref{sinbound} considering that $\gamma \geq \frac{\mu_2}{\mu_1} = \frac{\lambda_1}{\lambda_2}$ is equivalent to $\delta = (1 - \gamma)^{-1} \geq \delta_0$. 

The equality in \eqref{eq:2ndsinbound} is based on simple geometry. The lower bound in \eqref{eq:2ndsinbound} follows from $\sin \angle_{\ten A}(\ten{u}_1,\ten x) = \min_{\alpha \in \R} \| \ten u_1 - \alpha \ten x\|_{\A}$, whereas the upper bound is again elementary.
\end{proof}

\begin{lemma}\label{lambdaqbound}
 Assume that both
 \begin{align}\label{lambdaqassumptions}
  \rho^2(\ten x) \leq \frac{1}{4 \delta (\delta-1)}, \quad  \sin^2\angle_{\A}(\ten{u}_1,\ten x) \leq \frac{\delta}{2\delta - 1}. 
 \end{align}
 Then
 \begin{align}\label{eq:bbetarho}
 1 - \frac{\lambda_1}{\lambda(\ten x)} \leq \bfun_\delta\big(\rho(\ten x) \bigr) \rho^2(\ten x), \quad \bfun_\delta( \rho(\ten x)) := 
\frac{2 \delta }{1 + 2 \delta \rho^2(\ten x) + \sqrt{1 - 4 (\delta - 1) \delta \rho^2(\ten x)}}. 
 \end{align}
\end{lemma}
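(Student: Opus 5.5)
We work in the forward formulation of \Cref{corfor}: $\mu(\ten x)=\lambda(\ten x)^{-1}$ is the Rayleigh quotient of the $\ten A$-self-adjoint operator $\ten B=\ten A^{-1}\E$, with eigenvalues $\mu_1=\lambda_1^{-1}>\mu_2=\lambda_2^{-1}\geq\dots>0$. The quantity to bound is $1-\lambda_1/\lambda(\ten x)=(\mu_1-\mu)/\mu_1$, where $\mu=\mu(\ten x)$. We also have $\rho^2=\|\ten B\ten x-\mu\ten x\|_{\ten A}^2/(\mu^2\|\ten x\|_{\ten A}^2)$.

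Normalize $\|\ten x\|_{\ten A}=1$ and decompose $\ten x=\cos\phi\,\ten u_1+\sin\phi\,\ten w$ with $\ten w\perp_{\ten A}\ten u_1$, $\|\ten w\|_{\ten A}=1$. Set $s=\sin^2\phi$ and $\nu=\mu(\ten w)\leq\mu_2$. Then
\[
\mu=(1-s)\mu_1+s\nu .
\]
Since $\ten B\ten u_1=\mu_1\ten u_1$, the residual splits into an $\ten u_1$-component $\cos\phi(\mu_1-\mu)$ and an orthogonal component $\sin\phi(\ten B\ten w-\mu\ten w)$. This gives
\[
\mu^2\rho^2=(1-s)(\mu_1-\mu)^2+s\|(\ten B-\mu)\ten w\|_{\ten A}^2\geq (1-s)(\mu_1-\mu)^2+s(\nu-\mu)^2,
\]
because $\|(\ten B-\mu)\ten w\|^2=\|(\ten B-\nu)\ten w\|^2+(\nu-\mu)^2$. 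Write $a=\mu_1-\mu$ and $b=\mu-\nu$, so that $(1-s)a=sb$. Then $s=a/(a+b)$, and the lower bound becomes $ab\leq\mu^2\rho^2$.

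Next bound $b$ from below by $\mu-\mu_2$. Setting $t=1-\lambda_1/\lambda(\ten x)=a/\mu_1$, we have $\mu=\mu_1(1-t)$ and $\mu_2=\mu_1(1-1/\delta_0)\leq\mu_1(1-1/\delta)$. Hence $b\geq\mu_1(1/\delta-t)$, which is positive as long as $t<1/\delta$. Substituting into $ab\leq\mu^2\rho^2$ gives the quadratic inequality
\[
t\Bigl(\frac1\delta-t\Bigr)\leq(1-t)^2\rho^2,
\qquad\text{equivalently}\qquad
(1+\delta\rho^2)\,t^2-(1+2\delta\rho^2)\,t+\delta\rho^2\geq 0 \quad(\text{after multiplying by }\delta).
\]
Its discriminant is $(1+2\delta\rho^2)^2-4\delta\rho^2(1+\delta\rho^2)=1-4(\delta-1)\delta\rho^2$. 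This is nonnegative exactly under the first assumption in \eqref{lambdaqassumptions}. So $t$ is at most the smaller root or at least the larger root. Rationalizing, the smaller root equals
\[
\frac{2\delta\rho^2}{1+2\delta\rho^2+\sqrt{1-4(\delta-1)\delta\rho^2}}=\bfun_\delta(\rho)\rho^2,
\]
which is the claimed bound.

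The main obstacle is excluding the large-root branch, and the case $t\geq1/\delta$ where the bound on $b$ is not useful. This is where the second assumption enters. By \Cref{raylangleb}, $t\leq s\leq\delta/(2\delta-1)$, although that alone does not separate the roots. Instead I will argue directly with $s$. From $a(1-s)=sb$ and $b\geq \mu-\mu_2$, together with $\mu=(1-s)\mu_1+s\nu$ and $\nu\leq\mu_2$, we get
\[
t\leq s\Bigl(1-\frac{\nu}{\mu_1}\Bigr).
\]
I then aim to show that $s\leq\delta/(2\delta-1)$ forces $t$ to be at most the midpoint $(1+2\delta\rho^2)/(2(1+\delta\rho^2))$ of the two roots. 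If this direct route is too lossy, the fallback is a continuity argument: moving along the geodesic from $\ten u_1$ to $\ten x$, $t$ depends continuously on the angle, the two roots are separated when the discriminant is positive, and $t$ starts at $0$, so it stays on the small-root branch as long as the angle condition holds. This step is where I expect the real work, including tracking the boundary case of zero discriminant.
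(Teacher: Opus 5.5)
Up to the quadratic inequality your argument is correct, and it is the mechanism behind the paper's proof, which goes through \Cref{qbound} with $\gamma=1-1/\delta$ and $\tau=\rho$. Indeed $ab$ is exactly the squared residual of the compression of $\ten B$ to $\mathcal X=\mathrm{span}\{\ten u_1,\ten x\}$, whose Ritz values are $\mu_1$ and $\nu$; this is \Cref{lem:temple}. There are two slips. Multiplying by $\delta$ gives the leading coefficient $\delta(1+\rho^2)$, not $1+\delta\rho^2$; your discriminant and rationalized root belong to the corrected quadratic, whose midpoint is $(1+2\delta\rho^2)/(2\delta(1+\rho^2))$. Also, $t=s(1-\nu/\mu_1)$ holds with equality.

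The genuine gap is the branch selection, and your direct route cannot close it. You have already replaced $\nu$ by $\mu_2$. What remains, namely the angle hypothesis together with $t=s(1-\gamma_0)$ where $\gamma_0:=\nu/\mu_1$, only gives $t\le(1-\gamma_0)\,\delta/(2\delta-1)$. For $\nu\ll\mu_2$ this is close to $\delta/(2\delta-1)>\tfrac12$, whereas the midpoint is close to $1/(2\delta)<\tfrac12$ when $\rho$ is small. The information that rules out the upper branch is the exact second Ritz value $\gamma_0$, and you discarded it before selecting the root.

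The continuity fallback does not repair this as stated. Along the geodesic the residual changes, while the hypothesis on $\rho$ is imposed only at $\ten x$. So you do not know that the inequality with the fixed value $\rho(\ten x)$ holds at intermediate points. Proving that requires monotonicity of the residual along the path, which holds as long as $1-t\ge 2\gamma_0/(1+\gamma_0)$, so it again needs $\gamma_0$. In addition, the double-root case $\rho^2=1/(4\delta(\delta-1))$ is not covered by a connectedness argument.

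The fix is to select the root before coarsening, exactly as in \Cref{qbound}.
Put $\gamma:=1-1/\delta\ge\gamma_0$ and define the concave quadratic $G_g(t):=t(1-g-t)-(1-t)^2\rho^2$.

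1. Your identity gives $G_{\gamma_0}(t)\le 0$.
2. Since $\delta/(2\delta-1)=1/(1+\gamma)\le 1/(1+\gamma_0)$, the angle hypothesis yields $t\le t^\ast:=(1-\gamma_0)/(1+\gamma_0)$.
3. A direct computation gives $G_{\gamma_0}(t^\ast)=\gamma_0\bigl((1-\gamma_0)^2-4\gamma_0\rho^2\bigr)/(1+\gamma_0)^2$. This is nonnegative because $\rho^2\le 1/(4\delta(\delta-1))=(1-\gamma)^2/(4\gamma)\le(1-\gamma_0)^2/(4\gamma_0)$.
4. Hence $t^\ast$ lies between the roots of $G_{\gamma_0}$. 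Together with $0\le t\le t^\ast$ and $G_{\gamma_0}(t)\le0$, this forces $t\le t_-(\gamma_0)$, the smaller root.
5. Finally, $t_-(g)=1-q(g,\rho)$ with $q$ as in \Cref{qbound}, and $q$ is non-increasing in $g$. Therefore $t\le t_-(\gamma_0)\le t_-(\gamma)=\bfun_\delta(\rho)\rho^2$.
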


\begin{proof}
 Follows directly from \cref{qbound,raylangleb} considering $\rho(\ten x) = \tau(\ten x)$ and that $\gamma \geq \frac{\mu_2}{\mu_1} = \frac{\lambda_1}{\lambda_2}$ is equivalent to $\delta = (1 - \gamma)^{-1} \geq \delta_0$. 
\end{proof}
The first condition in \eqref{lambdaqassumptions} is trivial to check, and is required for $\bfun_\delta(\rho(\ten x))$ to give a real number. 
By \cref{eq:sinbound}, the second assumption in \cref{lambdaqassumptions} is implied by
\begin{equation}\label{eq:imply2ndeq}
 \frac{\lambda_1}{\lambda(\ten x)} \geq \frac{2\delta - 2}{2\delta - 1},
\end{equation}
which in turn corresponds to \cref{gammacoresp}.
This requires some a priori knowledge on the magnitude of $\lambda_1$. However, a rough estimate suffices, as it does not influence the accuracy of the presented error estimates.
From \cref{eq:bbetarho}, one can directly derive the relative eigenvalue bound
\begin{align}\label{eq:releigerr}
 \frac{\lambda(\ten x) - \lambda_1}{\lambda_1} \leq \frac{\bfun_\delta(\rho(\ten x))}{1-\bfun_\delta(\rho(\ten x)) \rho^2(\ten x)} \rho^2(\ten x).
\end{align}
Further, via \Cref{raylangleb}, one obtains
\begin{align}\label{eq:nicesin2bound}
 \sin^2\angle_{\A}(\ten u_1,\ten x) \leq \delta \bfun_\delta( \rho(\ten x)) \rho^2(\ten x) =
\frac{2 \delta^2 \rho^2(\ten x)}{1 + 2 \delta \rho^2(\ten x) + \sqrt{1 - 4 (\delta - 1) \delta \rho^2(\ten x)}}.
\end{align}
It is easy to see that this bound for $\rho \rightarrow 0$ quickly approaches from above its quadratic Taylor approximation $\rho^2 \delta^2$. For small $\rho$, this  yields
\begin{align}\label{eq:sinlinappr}
\frac{\lambda(\ten x) - \lambda_1}{\lambda_1}  \leq \delta \rho^2(\ten x) + \mathcal{O}(\rho^4(\ten x)), \quad
 \sin \angle_{\A}(\ten u_1,\ten x) \leq \delta \rho(\ten x)  + \mathcal{O}(\rho^3(\ten x)). 
\end{align}
With the following result, the eigenvector error can efficiently be controlled a posteriori in terms of $\rho(\ten x)$, which is vital to the outer iteration introduced in \cref{sec:outeriter}. 

\begin{theorem}\label{aposterbound}
 For all $\ten x$ that satisfy the assumptions in \Cref{lambdaqbound},
 \begin{align}\label{eq:everrbound}
 \left\| \ten{u}_1 - \frac{\ten x}{\|\ten x\|_{\A}} \right\|_{\A} \leq 2 \sin \left(\frac{1}{2} \arcsin \bigl( \sqrt{ \delta  \bfun_\delta( \rho(\ten x)) } \, \rho(\ten x) \bigr) \right).
 \end{align}
\end{theorem}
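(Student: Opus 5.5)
The bound follows by chaining the two estimates already available: \eqref{eq:nicesin2bound}, which controls $\sin\angle_{\A}(\ten u_1,\ten x)$ by $\rho(\ten x)$, and the equality in \eqref{eq:2ndsinbound}, which turns an angle into a norm distance. Throughout we use the convention $\langle \ten u_1,\ten x\rangle_{\A}>0$ fixed at the beginning of \Cref{sec:aposerrcon}. It implies that $\theta := \angle_{\A}(\ten u_1,\ten x) \in [0,\pi/2)$.

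First, by \eqref{eq:2ndsinbound} we have
\[
\Bigl\|\ten u_1 - \frac{\ten x}{\|\ten x\|_{\A}}\Bigr\|_{\A} = 2\sin\bigl(\tfrac12\theta\bigr).
\]
This is the chord length between two $\A$-unit vectors enclosing the angle $\theta$. Second, under the assumptions \eqref{lambdaqassumptions} of \Cref{lambdaqbound}, the combination of \eqref{eq:bbetarho} with the upper bound in \eqref{eq:sinbound} gives \eqref{eq:nicesin2bound}:
\[
\sin^2\theta \leq \delta\,\bfun_\delta(\rho(\ten x))\,\rho^2(\ten x).
\]
Taking square roots yields $\sin\theta \leq s := \sqrt{\delta\,\bfun_\delta(\rho(\ten x))}\,\rho(\ten x)$.

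Third, we convert this sine bound into an angle bound. Since $\theta\in[0,\pi/2]$ and $\sin$ is increasing there, $\theta \leq \arcsin(s)$. For the right-hand side of \eqref{eq:everrbound} to be meaningful we need $s\leq 1$. This follows from the second assumption in \eqref{lambdaqassumptions}, whose right-hand side $\delta/(2\delta-1)$ is at most $1$, together with the relation $1-\lambda_1/\lambda(\ten x) \leq \bfun_\delta\rho^2$ and the fact that \eqref{eq:nicesin2bound} is a valid upper bound. If instead $s>1$, we interpret $\arcsin(s)$ as $\pi/2$; the bound then still holds, because $2\sin(\theta/2)\leq\sqrt2$ for $\theta\le\pi/2$. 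Finally, $t\mapsto 2\sin(t/2)$ is increasing on $[0,\pi]$, so $2\sin(\theta/2)\leq 2\sin\bigl(\tfrac12\arcsin s\bigr)$, which is \eqref{eq:everrbound}.

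The main obstacle is the justification that $\theta$ is acute, which is what permits inverting the sine. It rests on the orientation convention for $\ten u_1$. Without that convention one would have to replace $\ten u_1$ by $-\ten u_1$, and the left-hand side of \eqref{eq:everrbound} would not be controlled. Everything else is direct monotonicity.
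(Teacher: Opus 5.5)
Your proposal is correct and follows the paper's proof: you write the chord identity from \Cref{raylangleb} as $2\sin(\frac12\arcsin(\sin\theta))$, which uses that $\theta$ is acute, and then insert the sine bound \eqref{eq:nicesin2bound} and use monotonicity. One small correction: $s\le 1$ does not follow from the second assumption in \eqref{lambdaqassumptions} as you argue, since an upper bound on $\sin^2\theta$ says nothing about the larger quantity $s^2$; it follows from the first assumption, because $s^2=\delta\,\bfun_\delta(\rho)\rho^2$ is increasing in $\rho^2$ and equals $\delta/(2\delta-1)\le 1$ at $\rho^2=1/(4\delta(\delta-1))$, so your fallback for $s>1$ is never needed.
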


\begin{proof}
As in \Cref{raylangleb}, we have
\begin{align*}
 \left\| \ten{u}_1 - \frac{\ten x}{\|\ten x\|}_{\A} \right\|_{\A} = 2 \sin \left(\frac{1}{2} \angle_{\A}(\ten{u}_1,\ten x) \right) 
 = 2 \sin \left(\frac{1}{2} \arcsin ( \sin\angle_{\ten A} (\ten{u}_1,\ten x)) \right).
\end{align*}
We now combine this identity with the angle bound of \Cref{raylangleb,lambdaqbound}.
\end{proof}

\begin{remark}
With regard to the assumed knowledge of only $\rho(\ten x)$, $\lambda(\ten x)$ and $\delta$, the estimates \cref{eq:sinbound,eq:bbetarho,eq:everrbound} are sharp, as they are restatements of the sharp bounds derived in \cref{sec:rayqup} for the corresponding forward iteration.
\end{remark}

The complete numerical scheme for the inverse iteration, together with the error control according to \Cref{aposterbound}, are summarized in \cref{alg:innerpinvit}. 

\begin{algorithm}[t!]
\DontPrintSemicolon
\SetKwFunction{pinvit}{PINVIT}
\SetKwInOut{Input}{input}\SetKwInOut{Output}{output}
\SetKwProg{KwProg}{function}{}{end}
\SetKwInput{KwRequires}{further requires}
\Input{tensor $\ten{x}^0$, tolerance $\tau$}
\KwRequires{$\enum \in (0, 1-\eiter)$, $\teig \in (0,1)$, $c \in (0,1)$ as in \cref{specequivA}, $\delta \geq \frac{\lambda_2}{\lambda_2 - \lambda_1}$}
\Output{approximation $(\lambda(\ten{x}^n), \ten{x}^n)$ to eigenpair $(\lambda_1,\ten u_1)$ of $(\ten A, \ten E)$ with $\| \ten u_1 - \ten x^n \|_{\A} \leq  \tau$ and upper bound $\rho^n$ to residual}
\BlankLine
\KwProg{\pinvit{$\ten x^0, \tau$}}{
$\zetares \gets (1+c)^{-1} \enum$\tcp*[r]{\normalfont{} \Cref{lem:truncr}}
$\etares \gets \infty$
\BlankLine
\For{$n = 0,1,\ldots$}{
    \tcp{(exactly) determine Rayleigh quotient}
    $\lambda^n \gets \lambda(\ten{x}^n)$\tcp*[r]{\normalfont{} \Cref{exactrq}}
    $\ten{x}^n \gets \frac{\ten{x}^n}{\|\ten{x}^n\|_{\A}}$ \;
    \BlankLine
    \tcp{determine approximate residual and bound}
    $\etares \gets \min \{ 2 \etares, \frac{4}{5} \zetares \sum_{k,k'} \| \ten S_k \ten H_\gamma \ten S_{k'} \ten{x}^n - \lambda^n \ten S_k \ten S_{k'} \ten{x}^n\|\} $\tcp*[r]{\normalfont{} \Cref{startetar}} 
    \While {$\norm{\operatorname{res}_{\etares}(\ten{x}^n)} < (1+\zetares^{-1}) \etares$}{
        $\etares \gets \frac{4}{5} \etares$\tcp*[r]{\normalfont{} \Cref{truncr}}
        }
    $\ten r^n \gets \operatorname{res}_{\etares}(\ten{x}^n)$\;
    $\rho^n \gets (1-c)^{-1/2} \frac{\|\ten r^n\| + \etares}{\|\ten{x}^n\|_{\A}}$\tcp*[r]{\normalfont{} \Cref{rhobound}} 
    \BlankLine
    \tcp{termination criterion using a posteriori bound}
    \If{$2 \sin \left(\frac{1}{2} \arcsin \big( \sqrt{ \delta \bfun_\delta( \rho^{n}) } \,\rho^{n} \big) \right) \leq \tau$}{
      \Return{$(\lambda(\ten{x}^n), \ten{x}^n, \rho^n)$}  \tcp*[r]{\normalfont{} \Cref{aposterbound}}
    }
    \BlankLine
    \tcp{determine optimal step size}
    solve $ \ten{X}^\trp \A \ten X z = \lambda_\ast \ten{X}^\trp \E \ten{X} z$ for $\ten X = \begin{bmatrix} \ten{x}^n &  \ten r^n \end{bmatrix}$\tcp*[r]{\normalfont{} \Cref{omegamin}} 
   $\ten x_\ast \gets z_1 \ten{x}^n + z_2 \ten r^n$
    \BlankLine
    \tcp{truncate iterate}
    $\etain \gets (1+c)^{-1/2} \Delta_{\ten A}  \|\ten x_\ast\|_{\A}$ where $\sqrt{1-\Delta_{\ten A}^2} - \rho^n \Delta_{\ten A} = \sqrt{ \frac{\lambda_\ast}{\lambda_\ast + \teig (\lambda^n - \lambda_\ast)}  }$\;
    $\ten{x}^{n+1} \gets \operatorname{trunc}_{\etain}(\ten x_\ast)$\tcp*[r]{\normalfont{} \Cref{trunciter}} 
}  
}
\caption{Perturbed inverse (inner) iteration}
\label{alg:innerpinvit}
\end{algorithm}

\subsection{A priori bound on the required number of inner iteration steps}
As in \cref{sec:aposerrcon}, let
\begin{equation}\label{eq:defkappa}
 \delta_0 := \frac{\lambda_2}{\lambda_2 - \lambda_1} \in (1,\infty).
\end{equation}
as well as $\ten u_1$ be a suitably oriented eigenvalue with $\|\ten u_1\|_{\A} = 1$ corresponding to $\lambda_1$. For the result $\ten x^n$ returned by \Cref{alg:innerpinvit} with tolerance $\tau>0$, we write 
\[  \ten x^n =  \PINVIT_\tau(\ten x), \]
which guarantees that the returned vector $\ten x^n$ satisfies $\| \ten u_1 - \ten x^n \|_{\A} \leq \tau$, as well as $\norm{\ten x^n}_{\ten A}=1$.
We show in this section that the number of required steps to decrease the eigenvalue error by a fixed constant can, however, be bounded explicitly by a value independent of the starting vector $\ten x^0$. This number thus does not increase in the course of the outer iteration. We do so for an arbitrary factor $u \geq 1$, but we make a specific choice in \cref{thm:quasioptr} below.

In order to apply \Cref{thm:conv}, we first relate the improvement in eigenvalue approximations by Rayleigh quotients that it guarantees to improvements in the approximate eigenvector.

\begin{lemma}\label{rankoptaprior}
Let $\ten x$ be such that for a $\delta \geq \delta_0$, with $\delta_0$ as in \eqref{eq:defkappa},
\begin{equation}\label{assrankopt}
 \frac{\lambda_1}{\lambda(\ten x)} \geq 1 - \frac{1}{2 \delta^2}    \,.
\end{equation}
Then $\lambda(\ten x) < \lambda_2$. If in addition, $\ten y$ is such that $\lambda(\ten y) \leq \lambda_2$ and for some $u \geq 1$,
\begin{equation}\label{actualineq}
 \frac{\lambda(\ten y) - \lambda_1}{\lambda_2 - \lambda(\ten y)} \leq \nu  \frac{\lambda(\ten{x}) - \lambda_1}{\lambda_2 - \lambda(\ten{x})} , \quad \nu := \frac{1}{4 u^2 \delta  - 1},
\end{equation} 
 then we have
 \begin{equation}\label{eq:normubound}
  \left\| \ten u_1 - \frac{\ten y}{\|\ten y\|_{\A}} \right\|_{\A} \leq \frac{1}{u} \sin \angle_{\A}(\ten u_1,\ten x).
\end{equation}
\end{lemma}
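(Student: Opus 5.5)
The plan is to reduce both assertions to the two-sided bound \eqref{eq:sinbound} of \Cref{raylangleb}, $a(\ten z) \leq \sin^2\angle_{\A}(\ten u_1,\ten z) \leq \delta\, a(\ten z)$ with $a(\ten z) := 1 - \lambda_1/\lambda(\ten z)$. Then express everything through the quotient $f(t) := (t-\lambda_1)/(\lambda_2 - t)$ that appears in \eqref{actualineq}. Throughout I write $r := \lambda_1/\lambda_2 = 1 - \delta_0^{-1} \in (0,1)$.

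\emph{First claim.} Assumption \eqref{assrankopt} gives $\lambda(\ten x) \leq \lambda_1 / (1 - \frac{1}{2\delta^2})$. Hence $\lambda(\ten x) < \lambda_2$ as soon as $r < 1 - \frac{1}{2\delta^2}$. Since $r = 1-\delta_0^{-1} \leq 1 - \delta^{-1}$, it suffices that $\frac{1}{2\delta^2} < \frac1\delta$, which holds because $\delta \geq \delta_0 > 1$.

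\emph{Change of variables.} For $t\in[\lambda_1,\lambda_2)$ and $q = f(t)$, one has $t = (\lambda_1 + q\lambda_2)/(1+q)$. A short computation then gives
\[
 1 - \frac{\lambda_1}{t} = \frac{q(1-r)}{r+q} =: g(q),
\]
and $g$ is increasing in $q \geq 0$. Write $q_x = f(\lambda(\ten x))$ and $q_y = f(\lambda(\ten y))$. By \eqref{actualineq} and monotonicity, $a(\ten y) = g(q_y) \leq g(\nu q_x)$. If instead $\lambda(\ten y) < \lambda_1$, then $\ten y$ is a multiple of $\ten u_1$ and there is nothing to prove. By \eqref{eq:2ndsinbound} and \eqref{eq:sinbound},
\[
 \Bigl\| \ten u_1 - \frac{\ten y}{\|\ten y\|_{\A}} \Bigr\|_{\A}^2 \leq 2\sin^2\angle_{\A}(\ten u_1,\ten y) \leq 2\delta\, g(\nu q_x),
 \qquad
 \frac{1}{u^2}\sin^2\angle_{\A}(\ten u_1,\ten x) \geq \frac{1}{u^2} g(q_x).
\]
So it suffices to show $2u^2\delta\, g(\nu q_x) \leq g(q_x)$, i.e. $2u^2\delta\,\nu (r+q_x) \leq r + \nu q_x$.

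\emph{Key inequality.} Inserting $\nu = (4u^2\delta - 1)^{-1}$ and multiplying out, the required inequality is equivalent to $(r+q_x)2u^2\delta \leq r(4u^2\delta -1) + q_x$. This rearranges to $q_x(2u^2\delta - 1) \leq r(2u^2\delta-1)$, that is, $q_x \leq r$, since $2u^2\delta>1$. Because $g$ is increasing, $q_x \leq r$ is equivalent to $a(\ten x) = g(q_x) \leq g(r) = \frac{1-r}{2} = \frac{1}{2\delta_0}$. This follows from \eqref{assrankopt}: $a(\ten x) \leq \frac{1}{2\delta^2} \leq \frac{1}{2\delta} \leq \frac{1}{2\delta_0}$.

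The main point requiring care is this algebra, which shows that the specific choice of $\nu$ makes the condition collapse exactly to $q_x \leq r$. A second point is checking that \Cref{raylangleb} is applied to $\ten y$ only in the admissible range $\lambda(\ten y) \leq \lambda_2$, which is assumed.
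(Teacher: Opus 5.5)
Your proof is correct and follows essentially the paper's argument: both use the lower bound in \eqref{eq:sinbound} for $\ten x$, the upper bound for $\ten y$, monotonicity of $t\mapsto (t-\lambda_1)/(\lambda_2-t)$, and finally \eqref{eq:2ndsinbound}. The difference is only in the bookkeeping. The paper first replaces $\lambda(\ten x)$ by $\lambda_1/(1-s^2)$ with $s=\sin\angle_{\A}(\ten u_1,\ten x)$ and uses $s^2\le 1/(2\delta_0)$ (and $\delta_0$ in the bound for $\ten y$). Your substitution $g(q)$ keeps $q_x$ and makes the condition reduce exactly to $q_x\le\lambda_1/\lambda_2$. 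The case $\lambda(\ten y)<\lambda_1$ that you set aside cannot occur, since $\lambda_1$ is the minimum of $\lambda$.
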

\begin{proof}
For $\delta_0$ as defined in \eqref{eq:defkappa}, the inequality $\frac{\lambda_1}{\lambda(\ten x)} < 1 - \frac{1}{\delta_0^{2}}$ is equivalent to $\lambda(\ten x) < \lambda_2$, 
and the latter estimate thus also follows from \eqref{assrankopt} for any $\delta \geq \delta_0$.

 Assume now that \cref{actualineq} holds true, and let $s := \sin \angle_{\A}(\ten u_1,\ten x)$. Then since $\lambda(\ten{x}) \leq \frac{\lambda_1}{1 - s^2}$ as a consequence of  \cref{eq:sinbound}, 
 \begin{equation}\label{eq:rankopaprior1}
    \frac{\lambda(\ten{y}) - \lambda_1}{\lambda_2 - \lambda(\ten{y})}\leq \nu  \frac{\lambda(\ten{x}) - \lambda_1}{\lambda_2 - \lambda(\ten{x})}
     \leq \nu \frac{\lambda_1 s^2}{\lambda_2 (1 - s^2) - \lambda_1} .
 \end{equation}
 Next, note that by \cref{eq:sinbound}, the assumption \cref{assrankopt} implies that $s \leq \frac{1}{\sqrt{2 \delta_0}}$, and thus 
 \begin{equation}\label{eq:rankopaprior2}
\nu =   \frac{1}{4 u^2 \delta  - 1} \leq \frac{1}{4 u^2 \delta_0  - 1} =  \frac{1 - \delta_0 / (2 \delta_0)}{2u^2 \delta_0 - \delta_0 / (2 \delta_0)}  \leq  \frac{1 - s^2 \delta_0}{2u^2 \delta_0 - s^2  \delta_0},
 \end{equation}
 where we have used that the expression on the right is monotonically decreasing with respect to $s$. Inserting this into \eqref{eq:rankopaprior1} and using that $\delta_0 =\lambda_2 /(\lambda_2 - \lambda_1)$ gives
 \[
      \frac{\lambda(\ten{y}) - \lambda_1}{\lambda_2 - \lambda(\ten{y})} \leq  \frac{\lambda_1}{\lambda_2}\frac{s^2}{2u^2 - s^2} .
 \]
A direct calculation shows that the latter inequality is in turn equivalent to
\begin{equation}\label{eq:lambdayest}
 \lambda(\ten{y}) \leq \lambda_1 \frac{2 \delta_0 u^2}{2 \delta_0 u^2 - s^2}.
\end{equation}
Using \cref{eq:sinbound} and then inserting \eqref{eq:lambdayest} we finally obtain
\begin{equation*}
\sin \angle_{\A}(\ten u_1,\ten y)^2  \leq \delta_0 \left( 1 - \frac{\lambda_1}{\lambda(\ten y)} \right) 
  \leq   \frac{s^2}{2 u^2},
\end{equation*}
and thus \cref{eq:normubound} follows with \cref{eq:2ndsinbound}.
\end{proof}

\begin{theorem}\label{thm:reduction}
 Let $\ten x^0$ fulfill
\begin{align*}
 \frac{\lambda_1}{\lambda(\ten x^0)} \geq 1 - \frac{1}{2 \delta^2}.
\end{align*}
Let $\ten x^n$ denote the $n$-th iterate of \cref{alg:innerpinvit} without termination criterion, with parameters $\teig \in (0,1)$ and $\enum \in (0,1 - \eiter)$.
 Then for every $u \geq 1$ and $\varepsilon = \eiter + \enum$, if
 \begin{align*}
  n \geq  \log \left( \frac{1}{4u^2 \delta - 1}  \right) / \log \left( \left(1 - \teig \right) \left(1 - \frac{1 - \varepsilon}{\delta} \right)^2 + \teig \right),
 \end{align*}
then we have
 \begin{align*}
 \left\| \ten u_1 - \frac{\ten x^n}{\| \ten x^n \|_{\A}} \right\|_{\A} \leq \frac{1}{u} \left\| \ten u_1 - \ten x^0 \right\|_{\A}.
 \end{align*}
\end{theorem}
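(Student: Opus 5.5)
The plan is to combine \Cref{thm:conv}(ii), applied with $k=1$ in every step, with \Cref{rankoptaprior}, which turns a reduction of the eigenvalue-error quotient into a reduction of the eigenvector error.

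First, the hypothesis on $\ten x^0$ is exactly \eqref{assrankopt}. By \Cref{rankoptaprior} it gives $\lambda_1 \leq \lambda(\ten x^0) < \lambda_2$, so \Cref{thm:conv} applies with $k=1$. For the per-step contraction factor I bound $q$ from above. Since $\lambda_1/\lambda_2 = 1 - 1/\delta_0$, we have
\[
q(\varepsilon,\lambda_1,\lambda_2) = 1 - \frac{1-\varepsilon}{\delta_0} \leq 1 - \frac{1-\varepsilon}{\delta},
\]
because $\delta \ge \delta_0$. Hence
\[
\tilde q^2 \leq \hat q^2 := (1-\teig)\Bigl(1-\frac{1-\varepsilon}{\delta}\Bigr)^2 + \teig < 1 .
\]

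Next comes an induction over the steps of \cref{alg:innerpinvit}. As long as $\lambda(\ten x^j) \geq \lambda_1$ (which always holds, since $\lambda_1$ is the minimum), \Cref{thm:conv}(ii) gives that $\lambda(\ten x^{j+1}) < \lambda_2$ and that the quotient $(\lambda(\ten x^{j+1})-\lambda_1)/(\lambda_2-\lambda(\ten x^{j+1}))$ decreases by at least the factor $\hat q^2$. The alternative case $\lambda(\ten x')<\lambda_k$ in \Cref{thm:conv}(i) is impossible for $k=1$. The induction also needs the hypotheses of \Cref{thm:conv}, namely \eqref{eq:enumcond} and \eqref{eq:modqcond}. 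These are ensured in each step by \Cref{lem:truncr} and by the choice of $\etain$ in \cref{trunciter}.

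Iterating the contraction gives
\[
\frac{\lambda(\ten x^n)-\lambda_1}{\lambda_2-\lambda(\ten x^n)} \leq \hat q^{2n}\,\frac{\lambda(\ten x^0)-\lambda_1}{\lambda_2-\lambda(\ten x^0)} .
\]
The stated lower bound on $n$ is equivalent to $\hat q^{2n} \le \nu = 1/(4u^2\delta-1)$. Here both logarithms are negative, so dividing by the denominator reverses the inequality appropriately; I need to check that the exponent convention matches, i.e. that the displayed denominator is $\log \hat q^2$ and the bound therefore reads $n \log \hat q^2 \le \log\nu$. With this, \eqref{actualineq} holds for $\ten y=\ten x^n$ and $\ten x=\ten x^0$, and $\lambda(\ten x^n)\le\lambda_2$ is already known. \Cref{rankoptaprior} then yields
\[
\Bigl\|\ten u_1 - \frac{\ten x^n}{\|\ten x^n\|_{\A}}\Bigr\|_{\A} \le \frac1u \sin\angle_{\A}(\ten u_1,\ten x^0).
\]

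Finally, by the left inequality in \eqref{eq:2ndsinbound}, $\sin\angle_{\A}(\ten u_1,\ten x^0) \le \|\ten u_1 - \ten x^0/\|\ten x^0\|_{\A}\|_{\A}$. The iterates are normalized in the algorithm, so this equals $\|\ten u_1-\ten x^0\|_{\A}$, which gives the claim.

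The main obstacle is bookkeeping rather than a new idea. It is to confirm that the chain of \Cref{thm:conv}(ii) stays in the case $k=1$, which is automatic since $\lambda_1$ is the minimum, and that the parameter choices in \cref{alg:innerpinvit} really deliver \eqref{eq:enumcond} and \eqref{eq:modqcond} at every step. The other point needing care is the replacement of $\delta_0$ by $\delta$ in $q$, which only loosens the contraction factor.
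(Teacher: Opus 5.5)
Your proposal is correct and follows the paper's (very brief) proof: apply \Cref{thm:conv}(ii) with $k=1$ at every step, feed the resulting contraction into \Cref{rankoptaprior}, and finish with $\sin\angle_{\A}(\ten u_1,\ten x^0)\le\norm{\ten u_1-\ten x^0}_{\A}$. Your extra bookkeeping (bounding $q$ via $\delta\ge\delta_0$, the induction keeping $\lambda(\ten x^j)<\lambda_2$, the signs of the logarithms) only makes explicit what the paper leaves implicit. The one step to tidy is the last one: instead of relying on normalization of $\ten x^0$, use that $\sin\angle_{\A}(\ten u_1,\ten x^0)=\min_{\alpha\in\R}\norm{\ten u_1-\alpha\ten x^0}_{\A}\le\norm{\ten u_1-\ten x^0}_{\A}$ holds for any scaling of $\ten x^0$, as in the proof of \Cref{raylangleb}.
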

\begin{proof}
Since $\lambda(\ten x^0) < \lambda_2$ by our assumptions, we can apply \Cref{thm:conv}(ii). \Cref{rankoptaprior} thus yields that $n$ steps suffice for the reduction of the eigenvector residual by the factor $\frac{1}{u}$, taking into account that $\sin \angle_{\A}(\ten u_1,\ten x^0) \leq \| \ten u_1 - \ten x^0 \|_{\A}$.
\end{proof}

\subsection{Rank and complexity bounds}\label{sec:outeriter}

In order to guarantee quasi optimal ranks as layed out in \cite{B23}, the iterate is truncated to specific tolerances after each a certain number of inner iterations.  By \cref{aposterbound}, $\PINVIT_\tau(\ten x)$ performs sufficiently many inner iterations such that
\begin{align*}
 \| \ten u_1 - \PINVIT_\tau(\ten x) \|_{\A} \leq \tau.
\end{align*}
In the {\em outer iteration} summarized in \cref{alg:outerpinvit}, for a starting vector $\ten{y}^0$ and $\alpha > 0$, we iterate %
\begin{align*}
 \ten{y}^{m+1} = \operatorname{trunc}_{\theta \etaout^m}\left( \PINVIT_{\tau_m}(\ten{y}^m) \right), \quad \tau_m := \frac{\etaout^m}{2 (1+(1+\alpha)\kappa)}    , \quad \theta := \frac{(1+\alpha) \kappa}{2(1+(1+\alpha)\kappa)}
\end{align*}
for $\etaout^{m+1} := \frac{1}{2} \etaout^m$, $m \in \N$. 
From \cref{eq:ttsvdquasiopt}, using that $\sqrt{1-c} \norm{\ten x} \leq \norm{\ten x}_{\ten A} \leq \sqrt{1+c} \norm{\ten x} $ by \eqref{specequivA}, we obtain
\begin{equation*}
   \norm{ \ten x -  \operatorname{trunc}_{s_1,\ldots,s_{K-1}}(\ten x) }_{\ten A} \leq \kappa  \min\bigl\{ \norm{\ten x - \rmap{\rep Y}}_{\ten A} \colon \rep Y \text{ of ranks at most $s_1,\ldots, s_{K-1}$} \bigr\}
\end{equation*}
with the modified quasi-optimality constant
\begin{equation}\label{kappadef}
 \kappa = \sqrt{\frac{1 + c}{1-c}} \sqrt{K-1} \,.
\end{equation}
We obtain the following rank bound in terms of the best approximation ranks defined in \eqref{eq:defrbest}.

\begin{theorem}\label{thm:quasioptr}
 For the sequence $\ten y^m$ generated by \Cref{alg:outerpinvit}, we have
 \begin{align*}
  \|\ten{u}_1 - \ten y^m\|_{\A} \leq \etaout^m, \quad \norm{\ranks(\ten y^m)}_\infty \leq r_{\mathrm{best}}\bigl(\ten{u}_1, (1+(1+\alpha)\kappa)^{-1} \alpha \etaout^m \bigr).
 \end{align*}
\end{theorem}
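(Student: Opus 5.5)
The proof follows the standard argument of \cite{B23} for combining an error-reducing iteration with recompression. We argue by induction on $m$, and we treat normalization separately. There are three steps: the error bound, the rank bound, and a check that the normalization in step (I2) does not spoil the estimates.

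\emph{Error bound.} Set $\ten x = \PINVIT_{\tau_m}(\ten y^m)$. By \Cref{aposterbound} and the termination criterion of \Cref{alg:innerpinvit}, $\norm{\ten u_1 - \ten x}_{\A}\le\tau_m$ and $\norm{\ten x}_{\A}=1$. The truncation gives $\norm{\ten x - \operatorname{trunc}_{\theta\etaout^m}(\ten x)}_{\A}\le \theta\etaout^m$; here we interpret the truncation tolerance in the $\A$-norm, or else absorb the factor $\sqrt{1+c}$ into $\theta$ via \eqref{specequivA}. The triangle inequality then yields
\[
\norm{\ten u_1 - \ten{\tilde y}^{m+1}}_{\A}\le \tau_m+\theta\etaout^m=\frac{1+(1+\alpha)\kappa}{2(1+(1+\alpha)\kappa)}\etaout^m=\etaout^{m+1}.
\]
If normalization is included, we use that $\ten u_1$ is $\A$-normalized. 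Then $\norm{\ten u_1 - \ten w/\norm{\ten w}_{\A}}_{\A}\le 2\norm{\ten u_1-\ten w}_{\A}$, or the sharper angle relations of \Cref{raylangleb}. This costs at most a harmless constant, which the parameter choice in \Cref{alg:outerpinvit} absorbs. The base case $m=0$ is the assumption on $\etaout^0$.

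\emph{Rank bound.} This is the key step. Let $r = r_{\mathrm{best}}(\ten u_1,\varepsilon)$ with $\varepsilon=(1+(1+\alpha)\kappa)^{-1}\alpha\etaout^m$, and let $\ten w$ have ranks at most $r$ with $\norm{\ten u_1-\ten w}_{\A}\le\varepsilon$. Then
\[
\norm{\ten x - \ten w}_{\A}\le \tau_m+\varepsilon,
\]
so the quasi-optimality of truncation to ranks $r$ (the $\A$-norm version of \eqref{eq:ttsvdquasiopt} with constant $\kappa$) gives
\[
\norm{\ten x - \operatorname{trunc}_{r,\dots,r}(\ten x)}_{\A}\le \kappa(\tau_m+\varepsilon).
\]
We now check that $\kappa(\tau_m+\varepsilon)\le\theta\etaout^m$. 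Multiplying both sides by $(1+(1+\alpha)\kappa)/\etaout^m$, this reads
\[
\kappa\bigl(\tfrac12+\alpha\bigr)\le\tfrac{(1+\alpha)\kappa}{2},
\]
which fails for $\alpha>0$. So the constants must be arranged more carefully: presumably $\tau_m$ carries an additional small factor, or the statement involves the unsquared quantity $\alpha$ differently. I will follow \cite{B23}. The truncation $\operatorname{trunc}_{\theta\etaout^m}$ selects the minimal ranks whose discarded singular-value tail is at most $\theta\etaout^m$. Any ranks $s$ whose tail bound is $\le\theta\etaout^m$ dominate the chosen ranks. Since the tail at ranks $r$ is bounded by $\sqrt{K-1}$ (up to the $\sqrt{(1+c)/(1-c)}$ factor) times the best-approximation error $\le\tau_m+\varepsilon$, it suffices that $\kappa(\tau_m+\varepsilon)\le\theta\etaout^m$. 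I would solve this inequality for the admissible $\varepsilon$, yielding the factor $\alpha/(1+(1+\alpha)\kappa)$ up to adjusting $\tau_m$.

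\emph{Main obstacle.} I expect the delicate point to be exactly this bookkeeping of constants: $\tau_m$, $\theta$ and $\varepsilon$ have to fit together, the Euclidean SVD tail must be converted into $\A$-norms, and the normalization in (I2) must be handled. Normalization does not change ranks, so the rank bound is unaffected by it. Once these are matched, the two assertions follow for all $m$ by induction.
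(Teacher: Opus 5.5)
\textbf{There is a genuine gap in the rank bound.} Your approach is the paper's own: the recompression argument of \cite[Theorem 5.5]{B23}, applied to $\ten x = \PINVIT_{\tau_m}(\ten y^m)$ with $\norm{\ten u_1 - \ten x}_{\A}\le\tau_m$. Your error bound $\tau_m+\theta\etaout^m=\etaout^{m+1}$ is correct. However, you never prove the rank bound, which is the actual content of the theorem. You reach an inequality that fails and then propose changing $\tau_m$ or the statement.

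The failure comes from an index slip, not from inconsistent constants. The truncation in step $m$ produces $\ten y^{m+1}$, so you must show $\norm{\ranks(\ten y^{m+1})}_\infty \le r_{\mathrm{best}}\bigl(\ten u_1,(1+(1+\alpha)\kappa)^{-1}\alpha\etaout^{m+1}\bigr)$. That is, take $\varepsilon=(1+(1+\alpha)\kappa)^{-1}\alpha\etaout^{m+1}=\alpha\tau_m$. The value you used, with $\etaout^m$, is twice too large.

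With the correct $\varepsilon$, the argument closes:
\begin{itemize}
\item Let $\ten w$ have ranks at most $r=r_{\mathrm{best}}(\ten u_1,\alpha\tau_m)$ with $\norm{\ten u_1-\ten w}_{\A}\le\alpha\tau_m$. Then $\norm{\ten x-\ten w}_{\A}\le(1+\alpha)\tau_m$.
\item The discarded singular-value tail at uniform ranks $r$ is therefore at most $\kappa(1+\alpha)\tau_m=\theta\etaout^m$. Your test inequality becomes $\kappa(\tfrac12+\tfrac\alpha2)\le\tfrac{(1+\alpha)\kappa}{2}$, which holds with equality.
\item So the ranks $(r,\dots,r)$ are admissible for tolerance $\theta\etaout^m$. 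The tolerance-based truncation, which as in \cite{B23} selects admissible ranks with smallest maximal entry, then returns ranks at most $r$.
\end{itemize}
This is exactly the \cite{B23} lemma with $\eta=\tau_m$: tolerance $(1+\alpha)\kappa\eta$ gives error $(1+(1+\alpha)\kappa)\eta=\etaout^{m+1}$ and ranks bounded by $r_{\mathrm{best}}(\ten u_1,\alpha\eta)$. The parameters $\tau_m$ and $\theta$ in \Cref{alg:outerpinvit} are chosen precisely for this, so no adjustment is needed.

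\textbf{Two smaller corrections.}
\begin{itemize}
\item \Cref{alg:outerpinvit} does not renormalize after truncation, so your normalization detour is unnecessary. Your claim that a factor $2$ from normalization would be absorbed by the parameters is also wrong: $\tau_m+\theta\etaout^m=\etaout^{m+1}$ holds with equality and leaves no slack.
\item For the norm conversion, read the tolerance $\theta\etaout^m$ in the $\A$-norm, i.e.\ truncate with Euclidean tolerance $\theta\etaout^m/\sqrt{1+c}$. By \eqref{eq:ttsvdquasiopt} and \eqref{specequivA}, the Euclidean singular-value tail at ranks $r$ is at most $\sqrt{K-1}\,(1-c)^{-1/2}(1+\alpha)\tau_m=\theta\etaout^m/\sqrt{1+c}$. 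The resulting $\A$-norm truncation error is then at most $\theta\etaout^m$. This is why $\kappa$ in \eqref{kappadef} carries the factor $\sqrt{(1+c)/(1-c)}$.
\end{itemize}
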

\begin{proof}
 Let the statement hold for some $m \in \N_0$. Then we obtain
 \begin{align}\label{priortrunccond}
 \left\| \ten{u}_1 - \PINVIT_{\tau_m}(\ten y^m) \right\|_{\A} \leq \frac{\etaout^m}{2 (1+(1+\alpha)\kappa)}.
\end{align}
The remainder follows analogously to \cite[Theorem 5.5]{B23}, taking into account the use of the $\ten A$-norm and the equivalence constants with respect to the Euclidean norm.
\end{proof}

\begin{proof}[Proof of \Cref{thm:main}]
The bounds on $ \|\ten{u}_1 - \ten y^m\|_{\A}$ and $\norm{\ranks(\ten y^m)}_\infty$ follow directly from \Cref{thm:quasioptr}, taking into account that \eqref{kappadef} and that $\alpha$ and $c$ can be chosen to be independent of $K$. The statement on the number of inner iterations follows from \Cref{thm:reduction} by noting that this number depends logarithmically on the factor of error reduction $2(1 + (1+\alpha)\kappa) \leq \tilde C \sqrt{K-1}$, with $\tilde C>0$ independent of $K$.
\end{proof}

As noted in Section \ref{sec:eocc}, the rank bounds of \Cref{thm:main} are for the results of each outer iteration in \cref{alg:outerpinvit}.
Using our strategy, at this point we cannot obtain sharp bounds on the intermediate ranks arising in the inner iteration of \cref{alg:outerpinvit}, which is given by \cref{alg:innerpinvit}, but we can obtain upper bounds that  are more problem-specific than the result of \Cref{thm:main}.
To this end, we rely on the bounds on ranks of the involved operators from \cite{BGP:22} given in \Cref{thm:opranks}. 

\begin{remark}\label{rem:intermediateranks}
For all intermediate results of the inner iteration \Cref{alg:innerpinvit} performed in every step of \Cref{alg:outerpinvit}, we can also derive rank bounds by estimating the possible inflation of ranks over the course of the inner iteration. While such bounds cannot be expected to be sharp, they give some first bounds on the total computational costs of the method:
\begin{enumerate}[{\rm\bf(i)}]
\item For general $\ten H = \ten T + \ten V$ as in \cref{eq:hamil}, so that \Cref{thm:opranks}(i) applies, the ranks of all intermediate quantities produced by \Cref{alg:innerpinvit} in the inner iteration starting from $\ten y^m$ are with certain $C_1, C_2>0$ bounded by 
\[ C_1  K^{C_2 ( \log K + 2 (M^+ - M^- + 1) ) } r_{\mathrm{best}}\bigl(\ten{u}_1, (1+(1+\alpha)\kappa)^{-1} \alpha \etaout^m \bigr). \]
 Here $C_2$ depends on the ratio of $C_{\mathrm{lower}}$ and $C_{\mathrm{upper}}$ in \eqref{eq:lowerupper}, which can depend on $K$, but otherwise $C_1, C_2$ are independent of $K$. Finally, while $M^+$ can be chosen to be fixed, $M^-$ depends linearly on $\log \max(\sigma(\ten D)) /  \min(\sigma(\ten D))$.

\item For $\ten H = \ten T + \ten V$ such that the additional assumptions of \Cref{thm:opranks}(ii) apply with parameters $d, \tilde d$, the bound is modified to 
\[
\tilde C_1  K^{\tilde C_2 ( \log \max\{ d, \tilde{ d}^2\} + 2 (M^+ - M^- + 1) )  } r_{\mathrm{best}}\bigl(\ten{u}_1, (1+(1+\alpha)\kappa)^{-1} \alpha \etaout^m \bigr),
\]
with the same comments as above applying to $\tilde C_1, \tilde C_2$ and $M^+, M^-$.
\end{enumerate}
\end{remark}

The bounds in \Cref{rem:intermediateranks} turn out to be far from sharp in comparison to the numerically observed ranks for the intermediate results in inner iterations.
To keep these ranks low, in practice one can also use additional intermediate recompressions with small tolerances. However, quantifying their effect a priori would likely require strongly problem-adapted techniques.

\begin{algorithm}[t!]
\DontPrintSemicolon
\SetKwFunction{pinvit}{PINVIT}
\SetKwFunction{opinvit}{OPINVIT}
\SetKwInOut{Input}{input}
\SetKwInOut{Output}{output}
\SetKwProg{KwProg}{function}{}{end}
\SetKwInput{KwRequires}{further requires}
\Input{tensor $\ten{y}^0$, tolerance $\tau$}
\KwRequires{$\alpha > 0$, $c$ as in \cref{specequivA}, $\delta \geq \frac{\lambda_2}{\lambda_2 - \lambda_1}$}
\Output{approximation $(\lambda(\ten{y}^+),\ten{y}^+)$ to generalized eigenpair $(\lambda_1,\ten{u}_1)$ of $(\ten A, \ten E)$ with $| \lambda(\ten y^+) - \lambda_1| < \lambda_1  \tau$}
\BlankLine
\KwProg{\opinvit{$\ten{y}^0, \tau$}}{
$\theta \gets \frac{(1+\alpha) \kappa}{2(1+(1+\alpha)\kappa)}$\tcp*[r]{\normalfont{} \Cref{kappadef}, \Cref{thm:quasioptr}} 
$\eta_0 \gets \frac{1}{2\theta\sqrt{D}}$ \tcp*[r]{\normalfont{} with $D := 1$ for \Cref{alg:innerpinvit}}
\BlankLine  
\For{$m = 0,1,\ldots$}{
    \tcp{perform inner iteration}
    $(\lambda(\ten{y}^+), \ten{y}^+, \rho^+) \gets \pinvit \bigl(\ten y^m, \frac12 (1+(1+\alpha)\kappa)^{-1} \etaout^m \bigr)$ \tcp*[r]{\normalfont{} \Cref{alg:innerpinvit}} 
    
    \BlankLine
    \tcp{termination criterion}
    \If{$\frac{\bfun_\delta(\rho(\ten x))}{1-\bfun_\delta(\rho(\ten x)) \rho^2(\ten x)} \rho^2(\ten x) <  \tau$}{
    \Return{$(\lambda(\ten{y}^+), \ten{y}^+)$}  \tcp*[r]{\normalfont{} \Cref{eq:releigerr}}  
    }
    \BlankLine
    \tcp{ensure quasi-optimal ranks}
    $\ten{y}^{m+1} \gets \operatorname{trunc}_{\etaout^m \theta}(\ten{y}^+)$\tcp*[r]{\normalfont{} \Cref{thm:quasioptr}}
    $\etaout^{m+1} \gets \etaout^m / 2$\;
}
}
\caption{Outer perturbed inverse iteration}
\label{alg:outerpinvit}
\end{algorithm}

\section{Simultaneous Approximation of Multiple Eigenvalues}\label{sec:multiple}

In this section, we consider a generalization of \Cref{alg:innerpinvit,alg:outerpinvit} to the simultaneous approximation of multiple eigenvalues and eigenvectors. For this, let
\begin{equation}
 \ten{X} := (\ten{x}_1,\ldots,\ten{x}_D), \; \ten{Y} := (\ten{y}_1,\ldots,\ten{y}_D)\;\, \in\,\; \cF^K_N \otimes \R^D
\end{equation}
be collections of $D$ approximate eigenvectors, which we treat as matrices with $D$ columns. We define
\begin{align}\label{rpara}
 		\Lambda(\ten X) := 
 		\diag(\lambda(\ten{x}_1),\ldots,\lambda(\ten{x}_D))
\end{align}
as the matrix containing the Rayleigh quotients of $\ten{x}_1,\ldots,\ten{x}_D$ on the diagonal.
The residual is given by
\begin{equation}\label{respara}
 \ten R(\ten X) = \A \ten X - \E \ten X \Lambda(\ten X) = \sum_{k,k'} \bigl( \ten S_k \ten H_\gamma  \ten S_{k'} \ten X - \ten S_{k} \ten S_{k'} \ten X \Lambda(\ten X) \bigr).
\end{equation}

The inner iteration for $D > 1$ (summarized in \cref{alg:jointinnerpinvit}) is analogous to \cref{alg:innerpinvit} for $D = 1$, in which case both algorithms coincide. 
The joint representation of eigenspace bases described in \cref{ssc:stepsize} allows for an efficient implementation.
However, it also entails that we can apply only a single rank truncation tolerance to the entire tensor including all columns of the represented matrix iterate.
Such truncations are thus handled under the worst-case assumption that the entire error may occur in a single one of the $D$ approximate eigenvectors, but without knowing which one. With this restriction, the outer iteration requires only minor adaptation compared to the single-vector case $D=1$, such that the first approximate eigenvector still provides the same adaption criterion. Additionally, the residual of this first column is heuristically compared with those remaining for the termination of the outer iteration. The development of a more involved scheme that uses all $D$ Rayleigh quotients and in particular controls also the remaining $D-1$ eigenvalue errors sufficiently tightly, as well as as generalized convergence estimates, are left for future work.

\subsection{Subspace minimization}\label{ssc:stepsize}
We describe the choice of stepsize for multiple eigenvalues in detail, because it differs more substantially from the case $D = 1$ described in \cref{eq:reducedevp}. Instead of an update that corresponds to an optimal step size, we solve for the lowest $D$ Ritz values with respect to the subspaces spanned by $\ten X^n$, as well as $\ten R^n$ when computing the unperturbed update $\ten X_\ast$. The solution is then given by the reduced eigenvalue problem
\begin{align*}
  \ten{\mathcal{X}}^\trp \A  \ten{\mathcal{X}} V =  \ten{\mathcal{X}}^\trp \E  \ten{\mathcal{X}} V \Lambda_\ast , \quad
\text{where $\ten{\mathcal{X}} = (\ten{X}^n\ \ten{R}^n) \in \mathcal{F}_N^K \otimes \R^{2D}$,} \quad V \in \R^{2D \times D}. %
\end{align*}
Symmetrically scaling the eigenvalue problem via $W := \mathrm{diag}(\Xi_{\ten x},\Xi_{\ten r}) V$, where
\begin{align}\label{eq:subspacerescale}
 \Xi_{\ten x} := \mathrm{diag}(\|\ten{x}^n_1\|_{\E}, \ldots, \|\ten{x}^n_D\|_{\E}), \quad \Xi_{\ten r} := \mathrm{diag}(\|\ten{r}^n_1\|_{\E}, \ldots, \|\ten{r}^n_D\|_{\E}),
\end{align}
leads to a normalized formulation that improves the numerical stability of solvers similarly to \cref{omegamin}. Furthermore, based on the prior subspace minimization on the span of $\ten X^n$ only (which is the first step of \cref{alg:jointinnerpinvit}), the eigenvalue relation
\begin{equation}\label{eq:Xnnormal}
I_D = (\ten X^n)^\trp \A \ten X^n = (\ten X^n)^\trp \E \ten X^n \Lambda^n = \Xi_{\ten x}^2 \Lambda^n
\end{equation}
can be assumed to hold true. This then results in the formulation $B W = M W \Lambda_\ast$, with 
\begin{align*}
B =
\begin{pmatrix}
 \Lambda^n & \Xi_{\ten x}^{-1} (\ten{X}^n)^\trp \A \ten R^n \Xi_{\ten r}^{-1} \\
\Xi_{\ten r}^{-1} (\ten R^n)^\trp \A \ten{X}^n \Xi_{\ten x}^{-1} & \Xi_{\ten r}^{-1} (\ten R^n)^\trp \A \ten R^n \Xi_{\ten r}^{-1}
\end{pmatrix}
\end{align*}
as well as the likewise symmetric weight matrix
\begin{align*}
M =
\begin{pmatrix}
 I_D & \Xi_{\ten x}^{-1} (\ten{X}^n)^\trp \E \ten R^n \Xi_{\ten r}^{-1} \\
\Xi_{\ten r}^{-1} (\ten R^n)^\trp \E \ten{X}^n \Xi_{\ten x}^{-1} & \Xi_{\ten r}^{-1} (\ten R^n)^\trp \E \ten R^n \Xi_{\ten r}^{-1}
\end{pmatrix}.
\end{align*}
Both matrices are calculated exactly (subject to only common round-off errors) using the summation formulas
\begin{align*}
 \ten{Y^\trp} \A \ten{Z} = \sum_{k,k'} (\ten S_k \ten{Y})^\trp \ten H_\gamma (\ten S_{k'} \ten Z), \quad  \ten{Y^\trp} \E \ten{Z} = \sum_{k,k'} (\ten S_k \ten{Y})^\trp (\ten S_{k'} \ten Z)
\end{align*}
for any pairs of matrices $\ten Y, \ten Z \in \mathcal{F}_N^K \otimes \R^D$ represented by joint block MPS factorizations as described in \cref{sec:jointMPS} below.
The matrix of the $D$ eigenvectors $W \in \R^{2D \times D}$ corresponding to the lowest eigenvalues $\Lambda_\ast = \mathrm{diag}(\lambda_{\ast 1},\ldots,\lambda_{\ast D})$ then yields the (unperturbed) update
\begin{align*}
 \ten{X}_\ast = \ten{\mathcal{X}} \mathrm{diag}(\Xi_{\ten x},\Xi_{\ten r})^{-1} W
 = \ten{X}^n \Xi_{\ten x}^{-1} W_1 + \ten R^n \Xi_{\ten r}^{-1} W_2, \quad W =: \begin{pmatrix}
              W_1 \\                                                              
             W_2
\end{pmatrix}, \ W_1,W_2 \in \R^{D \times D}.
\end{align*}
Moreover, it follows that the columns of $\ten{X}_\ast$ are always both $\A$- and $\E$-orthogonal such that
$\ten X_\ast^\trp \A \ten X_\ast = W^\trp B W$ and $\ten X_\ast^\trp \E \ten X_\ast = W^\trp M W$ are diagonal. We therefore rescale the colums of $W$ and thus $\ten X_\ast$ such that the represented columns become in fact $\A$-orthonormal. This is likewise the basis for the prior assumption \cref{eq:Xnnormal}. 

\subsection{Joint tensor parameterization of several eigenvectors}\label{sec:jointMPS}
In the simultaneous approximation of several eigenvectors $\ten{X} = (\ten{x}_1,\ldots,\ten{x}_D)$ by low-rank tensor methods, instead of maintaining a separate tensor representation for each eigenvector, it is natural to instead consider a \emph{joint tensor parameterization} where the index numbering the eigenvectors is simply treated as an additional mode in the tensor decomposition. This strategy is also called a \emph{block tensor train} representation in the literature \cite{Dolgov:14,KSU:14}.
For the joint approximation of eigenvectors $\ten{x}_i \in \cF^K$, $i=1,\ldots,D$, for $D \in \N$, this amounts to considering the matrix product state representation of the tensor 
\[ 
   \ten{\check x} = ( \ten{x}_{i,\alpha_1,\ldots,\alpha_K} )_{i \in \{1,\ldots,D\}, \alpha \in \{0,1\}^K}
\]
in the form
\[
    \ten{\check x} = \check{X} \SKP X_1 \SKP \cdots \SKP X_K.
\]
In the same manner, the additional core can be added at any other position $1 \leq p \leq K+1$ (that is, before component $X_p$) to yield a representation 
\[
  \ten{\check x} = X_1\SKP \cdots\SKP X_{p-1} \SKP \check{X} \SKP X_{p} \SKP \cdots \SKP X_K,
\]
where $X_k \in \R^{r_{k-1} \times 2 \times r_k}$ for $k=1,\ldots,p-1$, $X_k \in \R^{r_k \times 2 \times r_{k+1}}$ for $k=p,\ldots,K$, and the additional core $\check{X} \in \R^{r_{p} \times D \times r_{p+1}}$ carries the eigenvector index. Note that now, $r_K > 1$ and the rank indices for the component on the right are shifted by one. The new ranks $(1,r_1,\ldots,r_K,1)$ required for representing $\ten{\check x}$ in this block representation can be at most the sum of the ranks of the collected eigenvectors.

This representation extends naturally to the block-sparse representation of $\ten x_i \in \cF_N^K$: For $\ten{x}_i$, $i=1,\ldots,D$, with $\rep{X}_i = (X_{i,1}, \ldots, X_{i,K})$, we set the components $X_k \in \R^{r_{k-1} \times 2 \times r_k}$ by concatenating the blocks
\begin{equation*}
\unocc{X}_{k,n} = \begin{pmatrix}
\unocc{X_1}_{k,n} & 0 & \cdots & 0 \\
0 & \unocc{X_2}_{k,n} & \cdots & 0 \\
\vdots & \vdots & \ddots & \vdots \\
0 & 0 & \cdots & \unocc{X_D}_{k,n}
\end{pmatrix} \in \R^{\rho_{k-1,n} \times \rho_{k,n}}
\end{equation*} 
for $n \in \cK_{k-1} \cap \cK_{k}$ and where $\rho_{k,n} = \sum_{i = 1}^D \rho_{i,k,n}$ is the sum of the block sizes. The occupied blocks $\occ{X}_{k,n}$ are defined analogously for $n \in \cK_{k-1} \cap (\cK_{k}-1)$. Note that for $k = 1$ and $k = K$, the blocks are stacked horizontally and vertically, respectively.
Let the additional block $\check X$ that carries the eigenvector-specific information be at position $p$ (that is, before component $X_p$). Then it can be chosen to have block structure
\[
  \check X = \begin{pmatrix}
\neutlift{X}_{\hspace*{-.1cm}0} & 0 & \cdots & 0 \\
0 & \neutlift{X}_{\hspace*{-.1cm}1} & \cdots & 0 \\
\vdots & \vdots & \ddots & \vdots \\
0 & 0 & \cdots & \neutlift{X}_{\hspace*{-.1cm}N}
\end{pmatrix}  
\]
where 
\begin{equation*}
\neutlift{X}_{\hspace*{-.1cm}n} = \begin{matrix}
& & \hspace*{-10pt} \begin{pmatrix}
\hspace*{-.1cm} \neut{X}{D}_{\hspace*{-.2cm} n}
\end{pmatrix} \\[-8pt]
& \hspace*{-10pt} \iddots & \\[-8pt]
\begin{pmatrix} \hspace*{-.1cm} \neut{X}{1}_{\hspace*{-.2cm} n} \end{pmatrix} & & 
\end{matrix} \in \R^{\rho_{p,n} \times D \times \rho_{p+1,n}}, \quad n\in \cK_{p-1}
\end{equation*} 
stores the eigenvector-specific blocks
\begin{equation}\label{eq:evspecific}
\neut{X}{i}_{\hspace*{-.15cm} n} \in \R^{\rho_{p,n} \times \rho_{p+1,n}}, \quad i=1,\ldots,D, n\in \cK_{p-1}.
\end{equation}

\begin{example}
A collection of $D$ tensors $\ten x_1, \ldots, \ten x_D \in \cF^4_2$ of order $K=4$ and particle number $N=2$ can be represented in joint block format as
\begin{equation*}
\setlength\arraycolsep{3pt}
\ten x \!=\! \begin{bmatrix}
\unocc{X}_{1,0}^{\uparrow} & \occ{X}_{1,0}^{\uparrow}
\end{bmatrix}
\!\SKP\!
\begin{bmatrix}
\unocc{X}_{2,0}^{\uparrow} & \occ{X}_{2,0}^{\uparrow} & 0 \\[3pt]
0 & \unocc{X}_{2,1}^{\uparrow} & \occ{X}_{2,1}^{\uparrow}
\end{bmatrix}
\!\SKP\!
\begin{bmatrix}
\neutlift{X}_{\hspace*{-.1cm}0}  & 0 & 0 \\[3pt]
0 & \neutlift{X}_{\hspace*{-.1cm}1} & 0 \\[3pt]
0 & 0 & \neutlift{X}_{\hspace*{-.1cm}2} \\
\end{bmatrix}
\!\SKP\!
\begin{bmatrix}
\occ{X}_{3,0}^{\uparrow} & 0 \\[3pt]
\unocc{X}_{3,1}^{\uparrow} & \occ{X}_{3,1}^{\uparrow} \\[3pt]
0 & \unocc{X}_{3,2}^{\uparrow}
\end{bmatrix}
\!\SKP\!
\begin{bmatrix}
\occ{X}_{4,1}^{\uparrow} \\[3pt]
\unocc{X}_{4,2}^{\uparrow} 
\end{bmatrix}.
\end{equation*}
\end{example}
All operations mentioned in~\cite{BGP:22}, including the application of matrix free-operators, can be performed also in this joint block format. In particular, the rank truncation and orthogonalization procedures can be carried out in an analogous way. Using this orthogonalization, we require the components $1,\ldots,p-1$ to be {\em left-orthogonal} and the components $p,\ldots,K$ to be {\em right-orthogonal}. Then the norm information is stored in the eigenvector-specific blocks as in \eqref{eq:evspecific} in $\check X$, where
\begin{equation*}
\norm{ \ten x_i }^2 = \sum_{n \in \cK_{p-1}} \tr\begin{pmatrix} \hspace*{-.1cm} \neut{X}{i}_{\hspace*{-.15cm} n}^{\hspace*{-.15cm} \trp} \hspace*{-.1cm}  \neut{X}{i}_{\hspace*{-.15cm} n}\end{pmatrix}.
\end{equation*}

In order to maintain orthonormality between eigenvectors, we only need to ensure orthonormality of the blocks in \eqref{eq:evspecific}. 
In order to perform the steps described in \cref{ssc:stepsize}, we apply this orthogonalization procedure on the joint block representation of $(\ten X^n, \ten R^n) = (\ten x_1^n, \ldots, \ten x_D^n, \ten r_1^n,\ldots, \ten r_D^n)$. This yields the rescalings $\Xi_{\ten x}$ and $\Xi_{\ten r}$ in \cref{eq:subspacerescale} containing the norms of the approximate eigenvectors and their residuals respectively. Finally, after solving the small generalized eigenvalue problem required by Algorithm~\ref{alg:jointinnerpinvit}, the columns of $(\ten X^n, \ten R^n)$ are transformed by the matrix $W \in \R^{2D \times D}$ in order to yield the updates
$\ten x_1^{n+1},\ldots,\ten x_D^{n+1}$. This transformation can be applied directly to the blocks in $\check X$ of $(\ten X^n, \ten R^n)$. 

\begin{algorithm}[t!]
\DontPrintSemicolon
\SetKwFunction{pinvit}{PINVIT}
\SetKwInOut{Input}{input}\SetKwInOut{Output}{output}
\SetKwProg{KwProg}{function}{}{end}
\SetKwInput{KwRequires}{further requires}
\Input{joint tensor representation of $\ten{X}^0 := (\ten{x}^0_1,\ldots,\ten{x}^0_D) \in \mathcal{V}^K_N \otimes \R^D $, tolerance $\hat \tau_{x_1}$}
\KwRequires{tolerance $\enum \in (0, 1-\eiter)$, tolerance $\teig \in (0,1)$, bound $c$ satisfying \cref{specequivA}, bound $\delta \leq 1 - \frac{\lambda_1}{\lambda_2}$}
\Output{approximations $\{(\lambda_i,\ten{x}^n_i)\}_{i = 1}^D$ to generalized eigenpairs $\{(\lambda_i,\ten{u}_i)\}_{i=1}^D$ of $(\ten A, \ten E)$ with $\| \ten u_1 - \ten x^n_1 \|_{\A} \leq \hat \tau_{x_1}$ and upper bound $\{\rho^n_i\}_{i=1}^D$ to residuals} 
\BlankLine
\KwProg{\pinvit{$\ten X^0, \hat \tau_{x_1}$}}{
$\zetares \gets (1+c)^{-1} \enum$\tcp*[r]{\normalfont{} \Cref{lem:truncr}}
$\etares \gets \infty$
\BlankLine
\For{$n = 0,1,\ldots$}{
    \tcp{(exactly) normalize and evaluate Rayleigh quotients}
    solve $({\ten{X}^n})^\trp \A \ten{X^n} V = ({\ten{X}^n})^\trp \E \ten{X}^n V \Lambda^n$ for $V \in \R^{D \times D}$\;
    $\ten{X}^n \gets \ten{X}^n V$\tcp*[r]{\normalfont{} such that $\ten{X}^n$ has $\A$-orthonormal columns and $\Lambda(\ten{X}^n) = \Lambda^n$} 
    \BlankLine
    \tcp{determine approximate residual and bound}
    $\etares \gets \min(2 \etares,  \frac{4}{5} \zetares \sum_{k,k'} \| \ten S_k \ten H_\gamma \ten S_{k'} \ten{X}^n -  \ten S_k \ten S_{k'} \ten{X}^n \Lambda^n\|)$\tcp*[r]{\normalfont{} \Cref{startetar,respara}} 
    \While {$\min_{i = 1,\ldots,D} \|\operatorname{res}_{\etares}(\ten{x}^n)_i\| < (1+\zetares^{-1}) \etares$}{
        $\etares \gets \frac{4}{5} \etares$\tcp*[r]{\normalfont{} \Cref{truncr} }
        }
    $\ten R^n \gets \operatorname{res}_{\etares}(\ten{x}^n)$\;
    $\rho^n_i \gets (1-c)^{-1/2} \frac{\|\ten{r}^n_i\| + \etares}{\|\ten{x}^n_i\|_{\A}}$, $i = 1,\ldots,D$ \tcp*[r]{\normalfont{} \Cref{rhobound}} 
    \BlankLine
    \tcp{bound eigenvector residual and check termination criterion}
    $\tau_{x_1} \gets 2 \sin \left(\frac{1}{2} \arcsin \bigl( \sqrt{ \delta  \bfun_\delta( \rho^n_1) } \, \rho^n_1 \bigr) \right)$ \tcp*[r]{\normalfont{} \Cref{aposterbound}}
    \If{$\tau_{x_1} \leq \hat \tau_{x_1}$}{
      \Return{$(\Lambda(\ten{X}^n), \ten{X}^n, \{\rho^n_i\}_{i=1}^D)$}
    }
    \BlankLine
    \tcp{determine Ritz vectors}
    solve $ \ten{\mathcal{X}}^\trp \A  \ten{\mathcal{X}} V = \ten{\mathcal{X}}^\trp \E  \ten{\mathcal{X}} V \Lambda_\ast$ for $ \ten{\mathcal{X}} = (\ten{X}^n\ \ten{R}^n) \in \mathcal{F}_N^K \otimes \R^{2D}$\; 
   $\ten X_\ast \gets \ten{\mathcal{X}} V$ \tcp*[r]{\normalfont{} $\ten{X_\ast}$ has $\A$-orthonormal columns and $\Lambda(\ten{X}_\ast) = \Lambda_\ast$} 
    \BlankLine
    \tcp{truncate iterate}
    $\etain \gets (1+c)^{-1/2}  \cdot \min_{i = 1,\ldots,D}  \|\ten X_{\ast i}\|_{\A} \Delta_{\ten A,i} $\\ $\quad$ where $\sqrt{1-\Delta_{\ten A,i}^2} - \rho^n_i \Delta_{\ten A,i} = q_{\lambda,i}^{-1/2}$, $q_{\lambda,i} = \frac{\lambda_{\ast i} + \teig (\lambda_i^n - \lambda_{\ast i})}{\lambda_{\ast i}}$\;
    ${\ten X^{n+1}} \gets \operatorname{trunc}_{\etain}(\ten X_\ast)$\tcp*[r]{\normalfont{} \Cref{trunciter}} 
}  
\Return{$(\Lambda(\ten{X}^n), \ten{X}^n)$}
}
\caption{Perturbed inverse (inner) subspace iteration}
\label{alg:jointinnerpinvit}
\end{algorithm}%

\section{Numerical Experiments}\label{sec:numer}

We consider mainly one setting with one-dimensional Coulomb-like single particle potentials with nonsmooth interaction in the basic form \eqref{eq:hamil}, as described in detail in \cref{sec:exp10}, but the approach described in the following is suitable for general cases. Results of the experiments are discussed in \cref{sec:expobs}.

\subsection{Preliminaries}

In this section, we first derive suitable orbitals for the considered class of problems, discuss the evaluation of the two-particle interaction operators and comment on the choice of algorithmic parameters.

\subsubsection{Approximate eigenfunctions as orbital functions}\label{sec:approxeigen}
To derive suitable (real-valued) orbital functions $\{\phi_i\}_{i = 1}^K$ on $\Omega = [-b,b] \subset \R$, we approximate the eigenfunctions corresponding to the lowest eigenvalues of the one-particle interaction on a larger basis,
$$\ten T^{\psi}: \mathcal{F}_N^B \rightarrow \mathcal{F}_N^B, \quad B \gg K.$$ %
We therefore apply an $L_2$-orthonormal basis $\{\psi_j\}_{j = 1}^B$ of piecewise degree $6$ polynomial, $C^1(\Omega)$-multiwavelets \cite{DGH:99} under Dirichlet boundary conditions on $[-b,b]$, refined around $0$.\footnote{A wavelet $w$ is included in this basis if and only if $\supp(w) \cap (-b_w,b_w) \neq \emptyset$, $b_w := 2^{m_{\min}-\mathrm{level}(w)} b$, as well as $\mathrm{level}(w) \leq m_{\max}$.}
The corresponding $(B \times B)$-matrix $T^\psi = (t^{\psi}_{ij})_{ij}$ with
\begin{align*}
 t^{\psi}_{ij} := \int_\Omega \psi_i(x)\big( - \textstyle \frac12 \displaystyle \Delta + V \big) \psi_j(x) \ \mathrm{d}x &= \frac{1}{2} \int_\Omega \psi_i'(x) \psi_j'(x) \ \mathrm{d}x + \int_\Omega \psi_i(x)\ V(x)\ \psi_j(x) \ \mathrm{d}x
\end{align*}
is computed via sufficiently accurate quadrature rules, each adapted to the intersection of supports of pairs $(\psi_i,\psi_j)$ for $i,j = 1,\ldots,B$. The first $K$ (discrete, orthonormal) eigenvectors $\{q_i\}_{i = 1}^K$ with eigenvalues $\{\omega_i\}_{i = 1}^K$ of $T^\psi$ are then used to construct the orbitals
\begin{align*}
 \phi_i(x) := \sum_{j = 1}^B q_{ij} \psi_j(x), \quad T^\psi \phi_i = \omega_i \phi_i, \ i = 1,\ldots,K.
\end{align*}
Consequently, these yield the first $K$ approximate, $L_2$-orthonormal eigenfunctions of $\ten T$. 
For $\ten T$, we thus obtain the diagonal coefficient matrix
\begin{align*}
 (t_{ij})_{i,j = 1}^K = \mathrm{diag}(\omega_1,\ldots,\omega_d).
\end{align*}
The coefficient matrix for the shifted operator $\ten T + \gamma I$ is accordingly \begin{align*}
   \mathrm{diag}(\omega_1,\ldots,\omega_K) + \frac{\gamma}{N} I_K.                                                                                        
\end{align*}
Due to the exponentially decreasing support sizes of higher-level wavelets, the matrix $(t^\psi_{ij})_{ij}$ is sparse, whereas the eigenvectors can be computed by a suitable solver based on inverse iteration. The above process may be improved by combining the single, consecutive steps into a basis-adaptive eigenfunctions solver, but the present form suffices for the subsequent experiments.
For the preconditioner, similarly to \cite{MG22}, we approximate the diagonal of $\ten H + \gamma \ten I$ in $\mathcal{F}_N^K$ with the auxiliary $\ten D = \sum_{i = 1}^K \theta_{i}\, \ten a_i^\ast \ten a_i$ as in \cref{sec:expfsq} via
\begin{equation*}
 \theta_i := \frac{\gamma}{N} + t_{ii} + \sum_{j = k_i + 1}^{k_i + N} (v_{ijji} - v_{ijij}), \quad k_i := \max(0,i-N).
\end{equation*}

\subsubsection{Evaluation of the two-particle interaction operator}\label{sec:evaltwopart}
In principle, every entry
\begin{equation*}
  v_{ijk\ell}^\psi := \int_\Omega \int_\Omega \psi_i(x) \psi_j(x') \ V_2(x,x') \ \psi_k(x') \psi_\ell(x) \sdd x \sdd x' 
\end{equation*}
could be computed via an adaptive quadrature rule as described for $t^\psi$ in \cref{sec:approxeigen}, where $V_2$ describes the two-particle interaction potential. However, as the number of non-zero entries of $v^\psi$ equals the squared number of those of $t^\psi$, it becomes impractical to proceed the required basis reduction to $\{q_i\}_{i = 1}^K$, even with sparse arithmetics. Instead, in order to evaluate the coefficient tensor $(v_{ijk\ell})_{i,j,k,\ell = 1}^K$, the same summed Gaussian quadrature rule 
$$\int_{\Omega} f(x) \mathrm{d}x\ \approx\ \sum_{u = 1}^U w_u f(x_u), \quad w_u > 0,\ x_u \in \Omega,\ u = 1,\ldots,U$$ of degree $n \geq 7$ is applied to all required integrals, where the quadrature is exact for piecewise polynomials up to degree $2n-1$. This includes $f(x) = \psi_i(x) \psi_j(x)$ for $i,j = 1,\ldots,B$ and $f(x) = \phi_i(x) \phi_j(x)$ for $i,j = 1,\ldots,K$.\footnote{The quadrature intervals are chosen such that the length of any interval $I$ equals $2b\, 2^{-(m(I)+3)}$, where
 $m(I) = \max \{ \mathrm{level}(\phi_j) \colon \supp(\phi_j) \cap I \setminus \delta I \neq \emptyset, \ j = 1,\ldots,B \}$. The values $m(I)$ and intervals lengths are well-defined due to properties of the wavelets. The additional factor $2b$ is due to the rescaling of wavelets originally defined on $[0,1]$ to $\Omega = [-b,b]$.}
In this manner, we can efficiently compute
\begin{align*}
 v_{ijk\ell} & = \int_\Omega \int_\Omega \phi_i(x) \phi_j(x') \ V_2(x,x') \ \phi_k(x') \phi_\ell(x) \sdd x \sdd x' \\
 & \approx\ \sum_{s = 1}^U \sum_{t = 1}^U w_s \phi_i(x_s) \phi_j(x_t) V_2(x_s,x_t) w_t \phi_k(x_t) \phi_\ell(x_s) \\
 &  =  \sum_{s = 1}^U \sum_{t = 1}^U \big( \phi_i(x_s) \phi_\ell(x_s) \big)
  \big( w_s V_2(x_s,x_t) w_t \big) \big( \phi_j(x_t) \phi_k(x_t) \big). 
  \end{align*}

The wavelets are first evaluated on all quadrature points, the result of which is then transformed with the basis $\{q_i\}_{i = 1}^K$. This evaluation only requires objects of (the usually much smaller) sizes $U \times U, U \times B$ and $U \times K^2$.

\subsubsection{Algorithmic parameters}\label{sec:extrapol}
In all experiments, we use the parameter choices $\delta = 0.1$ and $\delta_0 = \eta = \delta/2$ concerning the exponential sums for the preconditioner (cf.~\cref{sec:expsum}) as well as 
$\enum := \frac{1}{2} (1 - \eiter)$ concerning the truncation of the residual (cf.~\cref{truncr}), such that $\eiter + \enum \in (\frac{1}{2},1)$. Further, we set $\teig = \frac{1}{2}$ concerning the truncation of the iterate (cf.~\cref{trunciter}). The position $p$ of the additional core in the joint tensor parametrization for $D > 1$ (cf.~\cref{sec:jointMPS}) is determined by the initial iterate $\ten Y^0$ as described in \cref{sec:initvalues}.

With the construction as in \cref{sec:approxeigen,sec:evaltwopart}, the discretization of the infinite dimensional problem to find the lowest eigenvalue(s) of $\ten H + \gamma \ten I$ depends on the shift $\gamma \in \R$, the minimal and maximal level of wavelets $m_{\min},m_{\max} \in \N$ and the number of orbitals $K \in \N$. The quadrature rule can be assumed to be  sufficiently accurate such that its error is negligible. A suitable shift $\gamma$ is chosen based on observations for $K = 14$ concerning resulting values $\lambda_1$, $\delta$ and $c$, on which we comment further below. Values for $m_{\min}$, $m_{\max}$ and $K$ are in turn chosen as large as computational resources allow.

The two main parameters ultimately required by the algorithm are a value $c \in (0,1)$ bounding the relation between the $\A$- and $\ell_2$-norms as in \cref{specequivA} and a value $\delta > 1$ bounding the relation between the two lowest eigenvalues of $\ten H_\gamma$ as in \cref{sec:aposerrcon}. For larger numbers of orbitals $K > 14$, we heuristically extrapolate such bounds from discretized problems that sufficiently small to be handled by conventional matrix algorithms, which for\footnote{using the same number of preconditioner summands $M^-$ to $M^+$ for $\ten S_{K_{\mathrm{low}}}$ as for $K$} $K_{\mathrm{low}} \in \{12,14\}$ yield precise bounds
\begin{align*}
  C_{K_{\mathrm{low}},\mathrm{lower}} \langle   \ten x,  \ten x \rangle
   \leq \langle \A_{K_{\mathrm{low}}}  \ten x , \ten x \rangle 	\leq 
   C_{{K_{\mathrm{low}}},\mathrm{upper}} \langle  \ten x,  \ten x \rangle, \quad \forall \ten x \in \mathcal{F}_N^{K_{\mathrm{low}}},
\end{align*}
as well as $\delta_{K_{\mathrm{low}}} = \frac{\lambda_{{K_{\mathrm{low}}},2}}{\lambda_{{K_{\mathrm{low}}},2} - \lambda_{{K_{\mathrm{low}}},1}}$. We estimate $c$ via \cref{rhockappa} from the extrapolated values
\begin{align*}
 C_{K,\mathrm{lower}} = C_{14,\mathrm{lower}}^{1+k}  C_{12,\mathrm{lower}}^{-k}, \quad
 C_{K,\mathrm{upper}} = C_{14,\mathrm{upper}}^{1+k} C_{12,\mathrm{upper}}^{-k}, \quad k = \frac{K - 14}{2}.
\end{align*}
Similarly, we set $\delta := \delta_K := \max(\delta_{14},\delta_{14}^{1+k}  \delta_{12}^{-k})$. This extrapolation itself is generally not bounded with respect to $K$, but in the following experiments, $C_{K}$ and $\delta_K$ are sufficiently close to $C_{14}$ and $\delta_{14}$.

\subsubsection{Initial values}\label{sec:initvalues}

Due to the diagonalization of the one-particle interaction, the first unit vector $e_1$ within $\mathcal{F}_N^K$ is an exact eigenvector of $\ten T$ with lowest eigenvalue $\tilde{\lambda}_1 = \gamma + \sum_{i = 1}^N t_{ii}$. It therefore yields a canonical, rank-one initial value $\ten y^0$ for \cref{alg:outerpinvit} that does not account for the two-particle interaction. In fact, as tested for the following experiments, other initial values cause the first truncated iterate $\ten y^1$ to be very close to $e_1$, but it can then take significant effort to arrive at this first outer truncation. More generally,  we use the first $D$ unit vectors $\sum_{i = 1}^D e_i$ in $\mathcal{F}_N^K$ as initial value $\ten Y^0$ for the simultaneous approximation of multiple eigenvectors. While directly bounded by $D$, the true ranks of $\ten Y^0$ are slightly lower. Moreover, we choose the position $p$ of the joint core as described in \cref{sec:jointMPS} so that the ranks of $\ten Y^0$ are minimized.

\subsection{One-dimensional Coulomb-like single particle potentials with nonsmooth interaction}\label{sec:exp10}
We consider the Hamiltonian
\begin{equation*}
   H = - \frac12 \Delta + \sum_{i=1}^N V(x_i) + \frac12 \sum_{i \neq j} V_2(x_i,x_j), 
\end{equation*}
with one- and two-particle potentials given by
\begin{align*}
 V(x) := -N \delta_0(x), \quad V_2(x,y) := \exponeoneKlowDonecfV \exp(-|x - y|),
\end{align*}
for $N = \exponeoneKlowDoneN$ particles on the interval $\Omega = [-b,b]$, $b = \exponeoneKlowDoneb$. We choose the shift $\gamma = \exponeoneKlowDonegamma$. Additional experiments suggest that $(m_{\min},m_{\max}) = (\exponesixKevenhigherDonemmin,\exponesixKevenhigherDonemmax)$ and $K = \exponesixKevenhigherDoneK$ suffice to bound the discritization error by roughly $10^{-4}$ to $10^{-6}$. Nonetheless, for demonstrational purposes, we set a termination tolerance (cf. \cref{alg:outerpinvit}) of $\tau = \exponesixKevenhigherDonereltol$ for $D = \exponesixKevenhigherDoneD$ and $\tau = \expfoursixKevenhigherDfourreltol$ for $D = \expfoursixKevenhigherDfourD$.

In \cref{sec:exp10orbitals}, we first consider the applied orbital functions (cf. \cref{sec:evaltwopart}). In \cref{sec:exp11_KlowD1}, we investigate the convergence properties of the eigenvalue algorithm for a reduced number of orbitals, $K = \exponeoneKlowDoneK$. 
\Cref{sec:exp16_KevenhigherD1} then shows the results for $K = \exponesixKevenhigherDoneK$. For this case, with $D = 1$, we also compare with a version that only runs the inner iteration. In \cref{sec:exp1D3}, we apply the joint block MPS variant for the simultaneous approximation of $D = \expfouroneKlowDfourD$ eigenvalues. For simplicity, $n$ in these results counts the total number of inner iterations and is not reset after each outer step. In the following, a number continued by $\mathtt{(\ldots)}$ indicates that the shown beginning of the value is the result of rounding.

\subsubsection{Orbital functions and resulting operators}\label{sec:exp10orbitals}
A minimal to maximal level of wavelets $(m_{\min},m_{\max}) = (\exponeoneKlowDonemmin,\exponeoneKlowDonemmax)$ yields $B = \exponeoneKlowDoneB$ basis functions. The first $\exponeoneKlowDoneK$ resulting orbitals (cf. \cref{sec:approxeigen}) are displayed in \cref{fig:exp11_KlowD1_wavelets}. 
Whereas the MPO ranks of the resulting operator $\ten T$ are bounded by $4$ due to the diagonalization, the ranks for the resulting operator $\ten V$, for $K = \exponesixKevenhigherDoneK$, are
\begin{align*}
 (1,4, 16, 39, 78, 99, 120, 145, 174, 207, 244, 285, 330, 379, 330, \ldots, 4,1).
\end{align*}
In principle, rounding small entries in the underlying tensor $V$ to zero may decrease the rank, but within sensible tolerances, we observe a negligible rank reduction.

\tikzfigure{htb}{1}{exp11_KlowD1_wavelets}{Applied orbitals as layed out in \cref{sec:approxeigen} for the setting described in \cref{sec:exp10}.}

\subsubsection{Comparison with exact results for few orbitals}\label{sec:exp11_KlowD1}

The preconditioner according to \cref{sec:expfsq} for $K = \exponeoneKlowDoneK$ with $\exponeoneKlowDonelengthS$ summands is based on the auxiliary $\ten D$ with spectrum in $[t_{\min},t_{\max}] = [\exponeoneKlowDonetmin(\ldots),\exponeoneKlowDonetmax(\ldots)]$, hence with quotient $T = t_{\max} / t_{\min} =  \exponeoneKlowDoneT(\ldots)$.
The algorithmic parameters are here exactly determined as $\delta = \exponeoneKlowDonekappa(\ldots)$ and $c = \eiter = \exponeoneKlowDoneepsiter(\ldots)$ as well as consequently $\enum = \exponeoneKlowDoneepsnum(\ldots)$. 
The exact lowest eigenvalue of the discretized problem setting for $\A, \E \in \R^{2^K \times 2^K}$ is 
$
 \lambda_{1} = \exponeoneKlowDonelambdaonetrue
$.
\Cref{fig:exp11_KlowD1_bounds} shows the convergence of \cref{alg:outerpinvit} depending on $n \in \N$ as well as the comparison with the bounds of the relative error determined during the algorithm, which only uses $\delta$ and bounds $\rho^n$ of the residuals $\rho(\ten x^n)$ as described by \cref{rhobound,lambdaqbound}. 
\Cref{fig:exp11_KlowD1_ranks} shows the convergence of \cref{alg:outerpinvit} in relation to the maximal ranks of the iterates.

\tikzfigure{htb}{1}{exp11_KlowD1_bounds}{Exact eigenvalue error as well as the bound determined during the algorithm (cf. \cref{rhobound,lambdaqbound}). Bounds in iterations in which  \cref{lambdaqassumptions} is violated have been omitted.}
\tikzfigure{htb}{1}{exp11_KlowD1_ranks}{Maximal ranks of iterates in relation to convergence of \cref{alg:outerpinvit} as well as the variant with only inner iterations.}

\subsubsection{High accuracy convergence results}\label{sec:exp16_KevenhigherD1}

The preconditioner for $K = \exponesixKevenhigherDoneK$ with $\exponesixKevenhigherDonelengthS$ summands is based on the auxiliary $\ten D$ with  $[t_{\min},t_{\max}] = [\exponesixKevenhigherDonetmin(\ldots),\exponesixKevenhigherDonetmax(\ldots)]$, hence with quotient $T = t_{\max} / t_{\min} = \exponesixKevenhigherDoneT(\ldots)$. The algorithmic parameters are here extrapolated following \cref{sec:extrapol} as $\delta = \exponesixKevenhigherDonekappa(\ldots)$ and $c = \eiter = \exponesixKevenhigherDoneepsiter(\ldots)$ as well as consequently $\enum = \exponesixKevenhigherDoneepsnum(\ldots)$. 
The eigenvalues returned by the algorithms are
\begin{equation*}
 \lambda_1^{\mathrm{inner/outer}} = \exponesixKevenhigherDonelambdaoneIO, \quad \lambda_1^{\mathrm{inner}} = \exponesixKevenhigherDonelambdaoneIonly.
\end{equation*}
\Cref{fig:exp16_KevenhigherD1_ranks} shows the convergence of \cref{alg:outerpinvit} (where $n_{\max}$ is the number of iterations) in relation to the maximal ranks of the iterates. \Cref{fig:exp16_KevenhigherD1_ranksres} shows both the maximal ranks of the iterates and of the residuals in relation to the iteration number. \Cref{fig:exp16_KevenhigherD1_ranksheatmap,fig:exp16_KevenhigherD1_ranksheatmap_plain} visualize all entries of each rank vector in relation to the iteration number.

\tikzfigure{htb}{1}{exp16_KevenhigherD1_ranks}{Maximal ranks of iterates in relation to convergence of \cref{alg:outerpinvit} as well as the variant with only inner iterations, using the last determined Rayleigh quotient as reference quantity.}
\tikzfigure{htb}{1}{exp16_KevenhigherD1_ranksres}{Maximal ranks of iterates (left) and residuals (right) in relation to the iteration number. }

\tikzfigure{h}{1}{exp16_KevenhigherD1_ranksheatmap}{Ranks of iterates by iteration for the inner/outer iteration. The values next to the dashed lines show each the relative eigenvalue error (as specified in \cref{fig:exp16_KevenhigherD1_ranks}) after an outer truncation.}
\tikzfigure{h}{1}{exp16_KevenhigherD1_ranksheatmap_plain}{Ranks of iterates by iteration. The dashed lines instead indicate each the last iteration with a larger relative eigenvalue compared to such after outer truncations of the inner/outer iteration \cref{fig:exp16_KevenhigherD1_ranksheatmap}.}

\subsubsection{Simultaneous approximation of multiple eigenvalues}\label{sec:exp1D3}
The same experiments as in \cref{sec:exp11_KlowD1,sec:exp16_KevenhigherD1} are performed for the simultanous approximation of the first $D = \expfouroneKlowDfourD$ eigenvalues with the joint inner iteration \cref{alg:jointinnerpinvit}. An additional, heuristic termination criterion is used here to not only ensure the accuracy of the first Rayleigh quotient, but to also take into account the typically lower acccuracy for subsequent ones. The first $D$ exact eigenvalues for $K = \expfouroneKlowDfourK$ are 
\begin{align*}
 (\lambda_i)_{i = 1}^{\expfouroneKlowDfourD} = \expfouroneKlowDfourlambdaDtrue.
\end{align*}
The returned eigenvalues for $K = \expfoursixKevenhigherDfourK$ are 
\begin{align*}
 (\lambda_i^{\mathrm{inner/outer}})_{i = 1}^{\expfouroneKlowDfourD} = \expfoursixKevenhigherDfourlambdaDIO.
\end{align*}
\Cref{fig:exp46_KevenhigherD4_ranks} shows the convergence of \cref{alg:jointinnerpinvit} in relation to the maximal ranks of the iterates for $K = \expfouroneKlowDfourK$ (left)  and  $K = \expfoursixKevenhigherDfourK$ (right). \Cref{fig:exp46_KevenhigherD4_ranksres} shows the maximal ranks of the iterates and of the residuals in relation to the iteration number for $K = \expfoursixKevenhigherDfourK$. \Cref{fig:exp46_KevenhigherD4_ranksheatmap} visualizes all ranks, with each rank vector having one more entry due to the joint core, in dependence of the iteration number.

\tikzfigure{htb}{1}{exp41_KlowD4_singleerrs}{Individual convergence of the $D$ Rayleigh quotients determined by \cref{alg:jointinnerpinvit}.}
\tikzfigure{htb}{1}{exp46_KevenhigherD4_ranks}{Maximal ranks of iterates in relation to convergence of \cref{alg:jointinnerpinvit} as well as the variant with only inner iterations. For $K = \exponesixKevenhigherDoneK$, we compare with the last determined Rayleigh quotients.} %
\tikzfigure{htb}{1}{exp46_KevenhigherD4_ranksres}{Maximal ranks of iterates (left) and residuals (right) in relation to the iteration number.} 
\tikzfigure{h}{1}{exp46_KevenhigherD4_ranksheatmap}{Ranks of iterates by iteration for the inner/outer iteration. The values next to the dashed lines show each the relative eigenvalue error (as specified in \cref{fig:exp46_KevenhigherD4_ranks}) after an outer truncation.}

\subsection{Observations on numerical experiments}\label{sec:expobs}
In summary, we observe in particular a clear effect of the inner-outer iteration scheme, where the truncation in each step of the outer iteration produces approximations with substantially smaller ranks than obtained by a na\"ive use of the inner iteration for all steps. While the ranks of iterates are observed to depart from such near-best approximations during the inner iterations due to the action of operators, this rank increase is far less pronounced than the individual maximal ranks of iterates and operators would suggest. In particular, we observe favorable performance of the method and controlled convergence also for larger values of $K$, including in the case of simultaneous approximation of multiple eigenvalues in joint tensor representation.

\section{Conclusions}

We have constructed an eigenvalue solver operating on low-rank tensor representations based on inexact preconditioned inverse iteration. While the central convergence estimates, which sharpen those given in \cite{Rohwedder:11}, are generally applicable and may be of independent interest, in our construction we pay particular attention to Schr\"odinger eigenvalue problems in second-quantized form as common in quantum chemistry.
By a rigorous construction of a low-rank preconditioner for such problems, for the single-vector version of the iterative scheme, under standard assumptions on starting values we achieve systematic convergence with guaranteed error reduction in each step. Moreover, we show this method to provide approximations of near-optimal ranks.
We have also proposed a version of the scheme operating on higher-dimensional subspaces that can be combined with joint low-rank representations of several eigenvectors. While this method performs equally well in practice, its analysis will be a subject of future work.

\section*{Acknowledgements}

Co-funded by the European Union (ERC, COCOA, 101170147). Views and opinions expressed are however those of the authors only and do not necessarily reflect those of the European Union or the European Research Council. Neither the European Union nor the granting authority can be held responsible for them.

\bibliographystyle{amsplain}
\bibliography{BKPsqpinvit}

\begin{appendix}

\section{Rayleigh Quotient Bounds}

\subsection{Auxiliary result for \Cref{thm:conv}}

\begin{lemma}\label{perturbedq}
 Let $\ten x,\ten y,\ten z$ be vectors such that $\lambda(\ten z) \leq (1-t) \lambda(\ten y) + t \lambda(\ten x)$ as well as $\lambda(\ten x), \lambda(\ten y)\in [\lambda_k , \lambda_{k+1})$ and $\lambda(\ten x) \geq \lambda(\ten y)$.
 Then if for a $q \in [0,1)$,
 \begin{equation}\label{eq:perturbedeqass}
  \frac{\lambda(\ten y) - \lambda_k}{\lambda_{k+1} - \lambda(\ten y)} \leq q^2  \frac{\lambda(\ten x) - \lambda_k}{\lambda_{k+1} - \lambda(\ten x)} , 
  \end{equation}
  we also have
  \begin{equation*}
   \frac{\lambda(\ten z) - \lambda_k}{\lambda_{k+1} - \lambda(\ten z)} \leq ((1-t)q^2+t)  \frac{\lambda(\ten x) - \lambda_k}{\lambda_{k+1} - \lambda(\ten x)}.
 \end{equation*}
\end{lemma}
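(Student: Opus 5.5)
The statement is a monotonicity and convexity property of the map $f(\lambda) := \frac{\lambda - \lambda_k}{\lambda_{k+1} - \lambda}$ on $[\lambda_k,\lambda_{k+1})$, so the plan is to argue entirely in terms of $f$. Write $a = \lambda(\ten x)$, $b = \lambda(\ten y)$, so $\lambda_k \le b \le a < \lambda_{k+1}$. The hypothesis \eqref{eq:perturbedeqass} reads $f(b) \le q^2 f(a)$, and we want $f(\lambda(\ten z)) \le ((1-t)q^2 + t) f(a)$. If $\lambda(\ten z) < \lambda_k$, then $f(\lambda(\ten z)) < 0$ and the claim is trivial. So we may assume $\lambda(\ten z) \ge \lambda_k$. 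Note that $\lambda(\ten z) \le (1-t)b + ta \le a < \lambda_{k+1}$, so $\lambda(\ten z)$ lies in the domain of $f$.

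\textbf{Step 1.} $f$ is strictly increasing on $[\lambda_k,\lambda_{k+1})$, since $f'(\lambda) = (\lambda_{k+1}-\lambda_k)/(\lambda_{k+1}-\lambda)^2 > 0$. It is also convex there, since $f(\lambda) = -1 + (\lambda_{k+1}-\lambda_k)/(\lambda_{k+1}-\lambda)$. By monotonicity,
\[
f(\lambda(\ten z)) \le f\bigl((1-t)b + ta\bigr).
\]

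\textbf{Step 2.} By convexity, $f((1-t)b + ta) \le (1-t) f(b) + t f(a)$. This requires $t \in [0,1]$, which is the intended range: the lemma is used with $t = \teig \in (0,1)$. Combining with the hypothesis gives
\[
f(\lambda(\ten z)) \le (1-t) q^2 f(a) + t f(a) = \bigl((1-t)q^2 + t\bigr) f(a),
\]
which is exactly the claim.

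There is no real obstacle here. The only points needing care are the case split for $\lambda(\ten z) < \lambda_k$ and the implicit restriction $t \in [0,1]$. For the latter, I would remark that for $t > 1$ the convex combination argument fails, and that every application in the proof of \Cref{thm:conv}(ii) uses $t = \teig \in (0,1)$. The same argument also covers the variant used there with $\lambda(\ten y) = \lambda_k$, where $f(b) = 0$.
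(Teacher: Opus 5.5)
Your proof is correct, but after the shared monotonicity step $f(\lambda(\ten z))\le f\bigl((1-t)\lambda(\ten y)+t\lambda(\ten x)\bigr)$ it takes a different route from the paper's.

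The paper stays with the explicit fraction. Writing $a=\lambda(\ten y)-\lambda_k$, $b=\lambda(\ten x)-\lambda_k$, $c=\lambda_{k+1}-\lambda(\ten y)$, $d=\lambda_{k+1}-\lambda(\ten x)$, it cross-multiplies the hypothesis $ad\le q^2bc$. This bounds $\frac{(1-t)a+tb}{(1-t)c+td}$ by $\frac{q^2(1-t)c+td}{(1-t)c+td}\cdot\frac{b}{d}$. It then verifies $q^2(1-t)c+td\le\bigl((1-t)q^2+t\bigr)\bigl((1-t)c+td\bigr)$, whose difference is $t(1-t)(1-q^2)(c-d)$. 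This last step is exactly where $c\ge d$, i.e.\ $\lambda(\ten x)\ge\lambda(\ten y)$, enters.

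You instead note that $f$ is convex on $(-\infty,\lambda_{k+1})$, because numerator and denominator sum to the constant $\lambda_{k+1}-\lambda_k$. Jensen plus the hypothesis then finishes in one line.

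Your route is shorter and explains why the estimate holds. It also does not need the ordering $\lambda(\ten x)\ge\lambda(\ten y)$. The only place you invoke it is to keep $(1-t)\lambda(\ten y)+t\lambda(\ten x)$ below $\lambda_{k+1}$, which holds for any convex combination of two points below $\lambda_{k+1}$. In fact that hypothesis is redundant in the lemma: $f(\lambda(\ten x))\ge 0$ and $q<1$ already give $f(\lambda(\ten y))\le f(\lambda(\ten x))$. The paper's route is purely algebraic and, given $c\ge d$, yields a marginally tighter intermediate bound before the final comparison.

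Your case split on $\lambda(\ten z)<\lambda_k$ is harmless but unnecessary, since $f$ is increasing on all of $(-\infty,\lambda_{k+1})$. You are right that both arguments implicitly require $t\in[0,1]$.
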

\begin{proof}
 By monoticity, 
 \begin{align*}
  \frac{\lambda(\ten z) - \lambda_k}{\lambda_{k+1} - \lambda(\ten z)} \leq \frac{(1-t) \lambda(\ten y) + t \lambda(\ten x) - \lambda_k}{\lambda_{k+1} - (1-t) \lambda(\ten y) -t  \lambda(\ten x)}
  = \frac{(1-t) (\lambda(\ten y) - \lambda_k) + t(\lambda(\ten x) - \lambda_k)}{(1-t)(\lambda_{k+1} - \lambda(\ten y)) + t( \lambda_{k+1} - \lambda(\ten x))}.
 \end{align*}
With $a \coloneqq  \lambda(\ten y) - \lambda_k$, $b \coloneqq \lambda(\ten x) - \lambda_k$, $c \coloneqq \lambda_{k+1} - \lambda(\ten y)$, $d\coloneqq\lambda_{k+1} - \lambda(\ten x)$ it thus suffices to show
\begin{equation}\label{eq:abcd}
   \frac{(1-t) a + t b}{(1-t) c + t d} \leq \bigl((1-t) q^2  + t\bigr)  \frac{b}{d}.
\end{equation}
By \eqref{eq:perturbedeqass}, we have $ad \leq q^2 bc$, and thus $((1-t) a + t b) d \leq (q^2 (1-t) c + t d) b$. Moreover, a simple direct calculation shows that whenever $c\geq d$, which in turn is equivalent to our assumption $\lambda(\ten x) \geq \lambda(\ten y)$, it holds that
\[
     q^2 (1-t) c + t d   \leq \bigl(q^2 (1 -t) + t \bigr) \bigl( (1 - t) c + t d \bigr)\,.
\]
 In summary, this gives \eqref{eq:abcd} and thus the assertion.
\end{proof}

\subsection{Rayleigh quotient under perturbation}\label{sec:rayqup}
To derive a lower bound for the Rayleigh quotient under perturbation, we here consider a general  linear selfadjoint operator $\ten B: V \rightarrow V$ on a Hilbert space\footnote{With our application in mind, we keep the explicit dependency of the inner product on the operator $\ten A$ but note that any inner product is admissible here} $(V,\langle \cdot, \cdot \rangle_{\ten A})$. We further assume that the Rayleigh quotient $\mu(\ten v)$ is bounded from below by some $L \in \R$ and introduce the normalized residual $\varrho(\ten v)$:
\begin{align*}
 \mu(\ten v) := \frac{\langle \ten B \ten v, \ten v \rangle_{\ten A}}{\langle \ten v, \ten v \rangle_{\ten A}} \geq L, \quad \varrho(\ten v) := \frac{\|\ten B \ten v - \mu(\ten v) \ten v\|_{\ten A}}{\|\ten v\|_{\ten A}}.
\end{align*}
 The main result derived in this section, used in \cref{trunciter}, is the following \cref{thm:pertbound}. From thereon, $\ten v$ and $\ten p$ are used to relate the unperturbed and perturbed vector, and $\ten x \in \mathcal{X}$ is used when there is only one vector.

\begin{theorem}\label{thm:pertbound}
For every $\ten v,\ten p \in V$ with angle $\angle_{\ten A}(\ten v,\ten p) \leq \alpha \in [0,\frac{\pi}{2}]$, 
\begin{equation}\label{eq:thm:pertbound}
 \frac{\mu(\ten p) - L}{\mu(\ten v) - L} \geq \left(  \max\left\{ \cos(\alpha) - \frac{\varrho(\ten v) }{\mu(\ten v) - L}\sin(\alpha) , 0 \right\} \right)^2.
\end{equation}%
\end{theorem}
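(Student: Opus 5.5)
The plan is to reduce the statement to a two-dimensional computation in the plane spanned by $\ten v$ and the residual direction. First, shifting $\ten B$ by $L$ (replacing $\ten B$ with $\ten B - L\ten I$) leaves $\varrho$ unchanged and replaces $\mu$ by $\mu - L$. It therefore suffices to treat $L=0$ with $\ten B$ positive semidefinite on the relevant vectors, i.e.\ $\mu \ge 0$. By scaling invariance of both sides we normalize $\|\ten v\|_{\ten A} = \|\ten p\|_{\ten A} = 1$. If $\mu(\ten v)=0$ or the max in \eqref{eq:thm:pertbound} is zero, the claim is trivial because $\mu(\ten p)\ge L$. Hence assume $\cos\alpha\,\mu(\ten v) > \varrho(\ten v)\sin\alpha$.

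Next, we write $\ten p$ in orthogonal form. Set $\beta = \angle_{\ten A}(\ten v,\ten p)\le\alpha$ and decompose $\ten p = \cos\beta\,\ten v + \sin\beta\,\ten w$ with $\ten w \perp_{\ten A} \ten v$ and $\|\ten w\|_{\ten A}=1$. Expanding the quadratic form gives
\[
\mu(\ten p) = \cos^2\beta\,\mu(\ten v) + 2\sin\beta\cos\beta\,\langle \ten B\ten v,\ten w\rangle_{\ten A} + \sin^2\beta\,\langle \ten B\ten w,\ten w\rangle_{\ten A}.
\]
Because $\ten w\perp\ten v$, we have $\langle \ten B\ten v,\ten w\rangle_{\ten A} = \langle \ten B\ten v - \mu(\ten v)\ten v,\ten w\rangle_{\ten A} \ge -\varrho(\ten v)$ by Cauchy--Schwarz. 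Positivity gives $\langle \ten B\ten w,\ten w\rangle_{\ten A}\ge 0$. Thus
\[
\mu(\ten p) \ge \cos^2\beta\,\mu - 2\sin\beta\cos\beta\,\varrho,
\]
where $\mu=\mu(\ten v)$ and $\varrho = \varrho(\ten v)$.

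This bound is weaker than the claimed $(\cos\beta\sqrt{\mu}-\ldots)^2$ form, since it omits the term $\sin^2\beta\,\varrho^2/\mu$. Discarding $\langle \ten B\ten w,\ten w\rangle_{\ten A}\ge 0$ is therefore too lossy, and this is the main obstacle. The fix is to use the positive semidefiniteness of $\ten B$ on the span of $\ten v,\ten w$ jointly rather than termwise. The $2\times 2$ Gram matrix of $\ten B$ in the orthonormal basis $\{\ten v,\ten w\}$ is $\begin{pmatrix}\mu & b\\ b & d\end{pmatrix}$ with $b=\langle \ten B\ten v,\ten w\rangle_{\ten A}$. This matrix is positive semidefinite, so $d\ge b^2/\mu$. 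Substituting,
\[
\mu(\ten p) \ge \cos^2\beta\,\mu + 2\sin\beta\cos\beta\, b + \sin^2\beta\, b^2/\mu = \frac{(\cos\beta\,\mu + \sin\beta\, b)^2}{\mu}.
\]
The right-hand side is a square. Since $b\ge-\varrho$ and the expression inside is positive, we get
\[
\frac{\mu(\ten p)}{\mu} \ge \Bigl(\cos\beta - \frac{\varrho}{\mu}\sin\beta\Bigr)^2.
\]
More precisely, one minimizes over $b\in[-\varrho,\varrho]$, noting that the square is monotone in $b$ on the region where the base is nonnegative, and uses the max with $0$ otherwise.

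Finally, the function $\beta\mapsto\cos\beta - (\varrho/\mu)\sin\beta$ is decreasing on $[0,\pi/2]$. Hence, on the region where it is nonnegative, replacing $\beta$ by $\alpha\ge\beta$ only lowers the bound, which yields \eqref{eq:thm:pertbound}. Undoing the shift by $L$ gives the general statement.
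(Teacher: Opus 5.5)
Your proof is correct, and it follows a genuinely different, more elementary route than the paper.

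\textbf{What the paper does.} It works in $\mathcal X=\mathrm{span}\{\ten v,\ten p\}$ through the Ritz values $a\ge b$ of $\ten B$ on $\mathcal X$. It writes $\mu(\ten p)$ exactly in terms of $a$, $b$, $\mu(\ten v)$ and $\beta$ (\Cref{prop:X}), and eliminates $a$ with the Temple-type identity of \Cref{lem:temple}. It then passes from $(\mu(\ten v)-b,\varrho_{\mathcal X}(\ten v),\beta)$ to $(\mu(\ten v)-L,\varrho(\ten v),\alpha)$ using the monotonicity of the piecewise function $g$ in all three arguments (\Cref{lem:gfun}).

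\textbf{What you do.} You work in the same plane. Your $\mathrm{span}\{\ten v,\ten w\}$ is exactly $\mathcal X$, not the ``residual direction'' plane announced in your first sentence; the residual enters only through Cauchy--Schwarz. You use an orthonormal basis adapted to $\ten v$ instead of the Ritz basis, and the paper's three monotonicity steps become three one-line inequalities:
\begin{enumerate}[(i)]
\item the determinant condition $\mu d\ge b^2$ for the compression of $\ten B-L\ten I$, which is equivalent to the paper's use of the lower Ritz value being $\ge L$;
\item $\langle \ten B\ten v,\ten w\rangle_{\ten A}\ge-\varrho(\ten v)$, which replaces $\varrho_{\mathcal X}\le\varrho$;
\item monotonicity of $\beta\mapsto\cos\beta-t\sin\beta$ on $[0,\frac{\pi}{2}]$, which replaces monotonicity in $\alpha$.
\end{enumerate}
Completing the square then gives the bound directly. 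There is no case split at $\alpha^\ast(s,t)$. The sign of the off-diagonal entry is handled automatically by (ii), whereas the proof of \Cref{prop:X} implicitly picks the rotation direction that decreases $\mu$.

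\textbf{Trade-off.} The paper's route yields an exact two-dimensional identity and parametrization, which it reuses in \Cref{sinbound,qbound} and which makes the sharpness remark transparent. Your route is shorter, and its equality cases ($d=b^2/\mu$, $b=-\varrho(\ten v)$, $\beta=\alpha$) recover the same characterization: $\mathcal X$ is $\ten B$-invariant with lower Ritz value $L$.

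The degenerate case $\mu(\ten v)=L$ that you mention is excluded by the statement itself, since $\mu(\ten v)-L$ appears in a denominator.
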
%

\begin{remark}
The bound is sharp when $L$ itself is sharp (and no further information about $\ten v$, $\ten p$ or $\ten B$ is provided). Assume for simplicity that $L$ is in fact attained as eigenvalue at an eigenvector $\ten v_L$.
When $\mu_\ast := \frac{\varrho(\ten v)^2}{\mu(\ten v) - L} + \mu(\ten v)$ happens to be an eigenvalue with eigenvector $\ten v_\ast$, then there is a pair $\ten{\tilde v}, \ten{\tilde p} \in \mathcal{X} := \mathrm{span}\{\ten v_\ast,\ten v_L\}$ that yields an equality in Theorem \ref{thm:pertbound}.
In general, equality is obtained if and only if the space $\mathcal{X} = \mathrm{span}\{\ten v,\ten p\}$ is $\ten B$-invariant with minimal, lower Ritz value $L$ as shown by the proof of \Cref{thm:pertbound}. If $L$ is only an infinum, the bound is still sharp as the prior holds true arbitrarily closely.
\end{remark}%
For the non-trivial angles $\alpha \leq \mathrm{arccot} \left( \frac{\varrho(\ten v)}{\mu(\ten v) - L} \right)$, we can rewrite
\begin{equation*}
  \mu(\ten v) - \mu(\ten p) \leq g_{\varrho(\ten v)}(\alpha), \quad g_{\varrho(\ten v)}(\alpha) := \varrho(\ten v) \sin(2 \alpha) + \frac{(\mu(\ten v) - L)^2 - \varrho(\ten v)^2}{\mu(\ten v) - L} \sin(\alpha)^2.
\end{equation*}
It is the smooth transition between the usual quadratic bound for eigenvectors, with thus zero residual, and the general linear one that does not use the residual. That is,
\begin{equation*}
 g_{0}(\alpha) = (\mu(\ten v) - L) \sin(\alpha)^2 \leq g_{\varrho(\ten v)}(\alpha) \leq (\max_{\ten w \in V} \mu(\ten w) - L) \sin(\alpha).
\end{equation*}
The proof of \cref{thm:pertbound} follows the same steps as well-established approaches to these extremal cases and shifts the perspective to a two-dimensional subspace $\mathcal{X} \subset V$ via
\begin{equation}\label{eq:vptoX}
 \mu(\ten p) \geq \min_{\mathcal{X} \subseteq V :\, \ten v \in \mathcal{X},\, \dim(\mathcal{X}) = 2} \  \min_{\ten{\tilde p} \in \mathcal{X} :\, \angle_{\ten A}(\ten v,\ten{\tilde p}) \leq \alpha}\ \mu(\ten{\tilde p}).
\end{equation}
Since it is much simpler to express properties of the vectors in those of a general two-dimensional subspace than it would be vice versa, relevant terms can be related to each other in form of comparitively simple equalities (cf.~\cref{lem:temple,prop:X}).
Bounding the remaining variables not assumed to be known exactly in a derived, monotone function (cf. \cref{lem:gfun}) yields the initially sought bound on the Rayleigh quotient and thus the proof for \cref{thm:pertbound}. We discuss and separate the here required, slightly more elaborated steps in advance, also as we reuse some in the subsequent section. 
For the orthogonal projection $\ten P_\mathcal{X}: V \rightarrow \mathcal{X}$, we define the projected operator
\begin{align*}
 \ten B_\mathcal{X}: \mathcal{X} \rightarrow \mathcal{X}, \quad \ten B_\mathcal{X} := \ten P_\mathcal{X} \ten B \ten P_\mathcal{X}^\trp
\end{align*}
as well as the corresponding Rayleigh quotient and the residual
\begin{align*}
 \mu_\mathcal{X}(\ten x) := \frac{\langle \ten B_\mathcal{X} \ten x, \ten x \rangle_{\ten A}}{\langle \ten x, \ten x \rangle_{\ten A}}, \quad
 \varrho_\mathcal{X}(\ten x) := \frac{\|\ten B_\mathcal{X}\ten x - \mu_\mathcal{X}(\ten x) \ten x\|_{\ten A}}{\|\ten x\|_{\ten A}}.
\end{align*}
For $\ten x \in \mathcal{X}$, that is, $\ten P_\mathcal{X}\ten x = \ten x$, the Rayleigh quotient $\mu_\mathcal{X}(\ten x) = \mu(\ten x)$ remains unchanged but for the residual we have
\begin{align*}
 \|\ten B_\mathcal{X}\ten x - \mu_\mathcal{X}(\ten x) \ten x\|_{\ten A} = \|\ten {P}_\mathcal{X} (\ten B \ten x - \mu(\ten x) \ten x)\|_{\ten A}
 \leq \|\ten B \ten x - \mu(\ten x) \ten x\|_{\ten A},
\end{align*}
and thereby $\varrho_\mathcal{X}(\ten x) \leq \varrho(\ten x)$. The two Ritz values of $\ten B$ on $\mathcal X$, defined as the two eigenvalues $a,b$ with $a \geq b > 0$ of $\ten B_{\mathcal X}$, can be related to the Rayleigh quotient and residual value. The equality is reminicent of the Temple inequality, but we restate it for completeness.
\begin{lemma}\label{lem:temple}
 Let $\mathcal{X} \subset V$ be a two-dimensional subspace with corresponding Ritz values $a \geq b$. Then $\varrho|_\mathcal{X}(\ten x)^2 = (a - \mu(\ten x))(\mu(\ten x) - b)$ for all $\ten x \in \mathcal{X}$.
\end{lemma}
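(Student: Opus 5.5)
Since $\ten B_\mathcal{X}$ is self-adjoint on the two-dimensional space $\mathcal{X}$ with respect to $\langle\cdot,\cdot\rangle_{\ten A}$, the plan is to compute everything explicitly in an $\ten A$-orthonormal eigenbasis. Take $\ten e_a, \ten e_b \in \mathcal{X}$ with $\ten B_\mathcal{X}\ten e_a = a\ten e_a$, $\ten B_\mathcal{X}\ten e_b = b\ten e_b$ and $\langle \ten e_a,\ten e_b\rangle_{\ten A}=0$. By homogeneity of both $\mu$ and $\varrho|_\mathcal{X}$, we may assume $\|\ten x\|_{\ten A}=1$ and write $\ten x = \xi \ten e_a + \eta \ten e_b$ with $\xi^2+\eta^2=1$. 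We use that $\mu(\ten x)=\mu_\mathcal{X}(\ten x)$ for $\ten x\in\mathcal{X}$, as already noted before the lemma. Then
\[
\mu(\ten x) = a\xi^2 + b\eta^2 .
\]
Here $\varrho|_\mathcal{X}$ is read as $\varrho_\mathcal{X}$, the residual of the projected operator.

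The key step is to compute the projected residual:
\[
\ten B_\mathcal{X}\ten x - \mu(\ten x)\ten x = (a-\mu)\xi\,\ten e_a + (b-\mu)\eta\,\ten e_b .
\]
By orthonormality,
\[
\varrho_\mathcal{X}(\ten x)^2 = (a-\mu)^2\xi^2 + (\mu-b)^2\eta^2 .
\]
Solving the two relations $\xi^2+\eta^2=1$ and $a\xi^2+b\eta^2=\mu$ gives
\[
\xi^2 = \frac{\mu-b}{a-b}, \qquad \eta^2 = \frac{a-\mu}{a-b}.
\]
This holds when $a>b$. Substituting, we obtain
\[
\varrho_\mathcal{X}^2 = \frac{(a-\mu)(\mu-b)\bigl[(a-\mu)+(\mu-b)\bigr]}{a-b} = (a-\mu)(\mu-b),
\]
which is the claim.

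It remains to treat the degenerate case $a=b$. Then $\ten B_\mathcal{X}=a\,\ten I$ on $\mathcal{X}$, so $\mu = a$ and the residual vanishes; both sides of the identity are zero.

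There is no real obstacle here. The only point requiring care is to use $\ten B_\mathcal{X}$, not $\ten B$, in the residual; this is exactly why the statement concerns the restricted residual $\varrho|_\mathcal{X}$ rather than $\varrho$. Combined with the earlier observation $\varrho_\mathcal{X}\le\varrho$, this gives the Temple-type inequality used afterwards.
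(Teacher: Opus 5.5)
Your proof is correct, but it takes a different route from the paper. You work in an $\ten A$-orthonormal eigenbasis of $\ten B_{\mathcal X}$, compute $\mu$ and the projected residual in coordinates, solve for $\xi^2,\eta^2$, and substitute, with $a=b$ treated separately.

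The paper argues coordinate-free. Since $\mathcal X$ is two-dimensional, $(\ten B_{\mathcal X}-a\ten I)(\ten B_{\mathcal X}-b\ten I)=0$. Hence, for $\|\ten x\|_{\ten A}=1$, both $\varrho_{\mathcal X}(\ten x)^2=\langle \ten B_{\mathcal X}^2\ten x,\ten x\rangle_{\ten A}-\mu^2$ and $0=\langle \ten B_{\mathcal X}^2\ten x,\ten x\rangle_{\ten A}-(a+b)\mu+ab$ hold. Subtracting gives $(a-\mu)(\mu-b)$ at once, with no division by $a-b$ and so no case distinction.

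Your version is more explicit. It also produces $\xi^2=(\mu-b)/(a-b)$ and $\eta^2=(a-\mu)/(a-b)$, which are exactly the coefficient relations \eqref{eq:a12ast} that the paper needs later in \Cref{prop:X} and \Cref{sinbound}.

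You can also drop the case split in your own argument. From $\mu=a\xi^2+b\eta^2$ you get $a-\mu=(a-b)\eta^2$ and $\mu-b=(a-b)\xi^2$. Therefore $\varrho_{\mathcal X}^2=(a-b)^2\xi^2\eta^2(\eta^2+\xi^2)=(a-b)^2\xi^2\eta^2=(a-\mu)(\mu-b)$, valid for $a\geq b$ without dividing by $a-b$.
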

\begin{proof}
One first observes that  $(\ten B_{\mathcal X} - a \ten I_{\mathcal X})(\ten B_{\mathcal X} - b \ten I_{\mathcal X}) = 0$ for the identity $\ten I_{\mathcal X} : \mathcal X \rightarrow \mathcal X$. Assuming without loss of generality $\| \ten x \|_{\ten A} = 1$, and using the symmetry of $\ten B|_\mathcal{X}$, the assertion then follows with $\varrho|_\mathcal{X}(\ten x)^2 = \langle  (\ten B|_\mathcal{X} - \mu(\ten x) I_{\mathcal X})^2 \ten x, \ten x \rangle_{\ten A}
   = \langle  \ten B|_\mathcal{X}^2 \ten x, \ten x \rangle_{\ten A} - \mu(\ten x)^2$ and $0=
   \langle  (\ten B|_\mathcal{X} - a I_{\mathcal X})(\ten B|_\mathcal{X} - b I_{\mathcal X}) \ten x, \ten x \rangle_{\ten A} = \langle  \ten B|_\mathcal{X}^2 \ten x , \ten x \rangle_{\ten A} - (a + b) \mu(\ten x) + a b
  $.
  \end{proof}
Within the two dimensional subspace, the difference between Rayleigh quotients can be stated exactly in dependence of the angle and Ritz values.
  \begin{lemma}\label{prop:X}
Let $\mathcal{X} \subset V$ be a two-dimensional subspace with corresponding Ritz values $a > b$ and let $\ten v, \ten p \in \mathcal{X}$ with $\kappa := \mu(\ten v) > b$ and $\beta := \angle_{\ten A}(\ten v, \ten p)$. Then
   \begin{equation}\label{eq:prop:X}
     \kappa - \mu(\ten p) = \kappa - b - \bigl(\sqrt{\kappa-b} \cos \beta- \sqrt{a-\kappa} \sin\beta \bigr)^2.
\end{equation}
\end{lemma}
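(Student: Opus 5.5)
We plan to prove \eqref{eq:prop:X} by computing directly in an $\ten A$-orthonormal eigenbasis of $\ten B_{\mathcal X}$. Let $\ten e_a, \ten e_b \in \mathcal X$ be $\ten A$-orthonormal eigenvectors of $\ten B_{\mathcal X}$ with eigenvalues $a > b$. For unit vectors $\ten x = \cos\phi\, \ten e_b + \sin\phi\, \ten e_a$ we then have
\[
\mu(\ten x) = b\cos^2\phi + a \sin^2\phi = b + (a-b)\sin^2\phi .
\]
This holds because $\mu(\ten x)=\mu_{\mathcal X}(\ten x)$ for $\ten x\in\mathcal X$. Since scaling does not affect $\mu$ or angles, we normalize $\ten v,\ten p$ and fix signs freely.

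Write $\ten v = \cos\phi\,\ten e_b + \sin\phi\,\ten e_a$. Choosing the signs of $\ten e_a$ and $\ten e_b$ appropriately, we may take $\phi\in[0,\pi/2]$. Then
\[
\sin\phi = \sqrt{(\kappa-b)/(a-b)}, \qquad \cos\phi = \sqrt{(a-\kappa)/(a-b)} ,
\]
which is well defined since $b<\kappa\le a$. In the plane, $\ten p$ (up to sign, which is irrelevant for $\mu$) equals $\cos\psi\,\ten e_b + \sin\psi\,\ten e_a$ with $\psi = \phi \pm \beta$. Hence
\[
\mu(\ten p) = b + (a-b)\sin^2(\phi\pm\beta).
\]
The identity to prove corresponds to the choice $\psi=\phi-\beta$, i.e.\ rotating toward $\ten e_b$. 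This is the relevant (minimizing) configuration, and presumably the intended reading is that $\ten p$ lies on that side. I would state this orientation convention explicitly. If the statement is meant for both orientations, the sign of the $\sqrt{a-\kappa}\sin\beta$ term flips in the other case, so the lemma should be read with this convention. The choice does not matter for the later use, where the minimum over $\ten{\tilde p}$ is taken.

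It then remains to expand:
\[
\sqrt{a-b}\,\sin(\phi-\beta) = \sqrt{a-b}\,(\sin\phi\cos\beta - \cos\phi\sin\beta) = \sqrt{\kappa-b}\cos\beta - \sqrt{a-\kappa}\sin\beta .
\]
Squaring gives $\mu(\ten p) - b = \bigl(\sqrt{\kappa-b}\cos\beta-\sqrt{a-\kappa}\sin\beta\bigr)^2$. Subtracting this from $\kappa - b$ yields \eqref{eq:prop:X}.

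The only delicate point is the orientation and sign bookkeeping, namely why $\psi=\phi-\beta$ rather than $\phi+\beta$ and how $\beta\in[0,\pi/2]$ interacts with possibly passing through $\ten e_b$. Passing through $\ten e_b$ causes no problem, since the squared expression handles negative values of $\sin(\phi-\beta)$ automatically.
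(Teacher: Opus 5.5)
Your proof is correct and is essentially the paper's argument. The paper also expands $\ten v,\ten p$ in an $\ten A$-orthonormal Ritz eigenbasis of $\ten B_{\mathcal X}$ with coefficients $\sqrt{(\kappa-b)/(a-b)}$ and $\sqrt{(a-\kappa)/(a-b)}$, writes $\ten p$ as $\ten v$ rotated by $\beta$, and squares the resulting coefficient of the $a$-eigenvector. The orientation convention you flag (rotating toward the $b$-eigenvector) is exactly what the paper tacitly assumes when it writes $\ten p$ via its particular rotation matrix. You are right that for the opposite orientation, with $\beta\in[0,\pi/2]$, only the inequality $\kappa-\mu(\ten p)\le$ right-hand side of \eqref{eq:prop:X} holds, and that inequality is all the proof of \Cref{thm:pertbound} needs.
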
%
The right-hand side attains its first maximum $\kappa - b$ at
\begin{equation}
 \beta = \arccot \left( \frac{\sqrt{a-\kappa}}{\sqrt{\kappa - b}} \right) \in \Bigl[0, \frac{\pi}{2}\Bigr].
\end{equation}
Ultimately, if $\beta$ is bounded by a value $\alpha$ equal or larger than this limit, only a trivial statement about the Rayleigh quotient can be made. In \cref{eq:thm:pertbound}, this is reflected by the maximum.
\begin{proof}

 Let  $\ten e_1, \ten e_2 \in \mathcal{X}$ be the two orthonormal eigenvectors corresponding to $a$ and $b$ of $\ten B|_\mathcal{X}: \mathcal{X} \rightarrow \mathcal{X}$. For a representation $\ten v = a_1 \ten e_1 + a_2 \ten e_2$ and $\ten p = a^\ast_1 \ten e_1 + a^\ast_2 \ten e_2$, where w.l.o.g. $\|\ten v\| = \|\ten p\| = 1$ and $a_1,a_2 \geq 0$, we can express $\ten p$ via a rotation matrix as
\begin{align*}
 \begin{pmatrix}
  a^\ast_1 \\ a^\ast_2
 \end{pmatrix}
 =
 \begin{pmatrix}
  \cos \beta & -\sin \beta \\
  \sin \beta & \cos \beta
 \end{pmatrix}
  \begin{pmatrix}
  a_1 \\ a_2
 \end{pmatrix}, \quad \beta = \angle_{\ten A}(\ten v,\ten p) \leq \alpha.
\end{align*}
Thus $a_1^\ast = a_1 \cos \beta - a_2 \sin \beta$.
With 
\begin{align}\label{eq:a12ast}
    a_1^\ast = \frac{\sqrt{\mu(\ten p)-b}}{\sqrt{a - b}}, \ a_1 = \frac{\sqrt{\kappa-b}}{\sqrt{a - b}}, \ a_2 = \frac{\sqrt{a -\kappa}}{\sqrt{a - b}},
\end{align}
this yields the equivalent identity
\begin{align}\label{func:f}
 \mu(\ten p) - b = \bigl(\sqrt{\kappa-b} \cos \beta- \sqrt{a-\kappa} \sin\beta \bigr)^2.
\end{align}
from which \cref{eq:prop:X} is directly obtained.
\end{proof}

We prove now that the function that underlies the right-hand side of \cref{eq:prop:X} fulfills the necessary monoticity.
\begin{lemma}\label{lem:gfun}
 The function 
 \begin{align*}
 g: \R_{> 0} \times \R_{\geq 0} \times \R_{\geq 0} \rightarrow \R_{\geq 0}, \quad g(s,t,\alpha) := \begin{cases}
                  f(s,t,\alpha), & \ \alpha < \alpha^\ast(s,t), \\
                  s, & \mbox{otherwise},
                 \end{cases}
\end{align*}
for $f(s,t,\alpha) :=  s - s ( \cos(\alpha) - \frac{t}{s} \sin(\alpha) )^2$ and $\alpha^\ast(s,t) := \arccot(\frac{t}{s})$ is continuous and monotonically non-decreasing in each of its variables.
\end{lemma}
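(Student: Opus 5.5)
Writing $u := \cos\alpha - \tfrac{t}{s}\sin\alpha$, we have $f(s,t,\alpha) = s - s\,u^2$, and on the region $\alpha < \alpha^\ast(s,t) = \arccot(t/s)$ we have $u > 0$. The plan is to check continuity at the switching curve first and then establish monotonicity in each variable separately, using the region where $u \ge 0$.

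\textbf{Continuity.} The function $f$ is smooth on $\R_{>0}\times\R_{\ge0}\times\R_{\ge0}$, and $\alpha^\ast$ is continuous in $(s,t)$. At $\alpha = \alpha^\ast(s,t)$ we have $\cot\alpha = t/s$, so $u = \sin\alpha\,(\cot\alpha - t/s) = 0$. Hence $f = s$ there, and the two branches agree, so $g$ is continuous. In particular, on the first branch we have $u \in (0,1]$ for $\alpha \in [0,\alpha^\ast) \subset [0,\pi/2]$. This holds because $\sin\alpha>0$ and $\cot\alpha > t/s$ for $\alpha>0$, while $u = 1$ at $\alpha = 0$.

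\textbf{Monotonicity in $t$ and $\alpha$ on the first branch.} We have $\partial_t f = -2su\cdot(-\sin\alpha/s) = 2u\sin\alpha \ge 0$. For $\alpha$, we get $\partial_\alpha f = -2su\,\partial_\alpha u = 2su(\sin\alpha + \tfrac{t}{s}\cos\alpha) \ge 0$, since $\alpha\in[0,\pi/2]$ and $u\ge0$.

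\textbf{Monotonicity in $s$ on the first branch.} We rewrite $f = s - (\sqrt{s}\cos\alpha - \tfrac{t}{\sqrt s}\sin\alpha)^2 = s\sin^2\alpha + 2t\sin\alpha\cos\alpha - \tfrac{t^2}{s}\sin^2\alpha$. Then $\partial_s f = \sin^2\alpha\,(1 + t^2/s^2) \ge 0$.

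\textbf{Across branches.} On the second branch $g = s$, which is trivially nondecreasing in $s$ and constant in $t$ and $\alpha$. It remains to show that moving a variable across the switching curve preserves monotonicity.

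For $\alpha$: $\alpha^\ast$ does not depend on $\alpha$, and $g$ increases to $s$ and then stays constant, so $g$ is nondecreasing in $\alpha$.

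For $t$: $\alpha^\ast = \arccot(t/s)$ is decreasing in $t$. So for fixed $\alpha$, increasing $t$ moves the point from the first branch to the second, and continuity together with $f \le s$ gives the claim.

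For $s$: $\alpha^\ast$ is increasing in $s$. So increasing $s$ moves the point from the second branch ($g=s$) into the first. We need $f(s',t,\alpha) \ge s$ for $s' > s$ with $\alpha \le \alpha^\ast(s,t)$ and $\alpha<\alpha^\ast(s',t)$. Since $f(\cdot,t,\alpha)$ is nondecreasing on its branch and continuous, we use the following argument. The entry point $\tilde s$ where $\alpha = \alpha^\ast(\tilde s,t)$ satisfies $f(\tilde s,t,\alpha) = \tilde s \ge s$, and $f$ increases beyond $\tilde s$. This combination of branch switching with the $s$-monotonicity of $f$ is the main obstacle. The computation above shows that the explicit derivative in $s$ is nonnegative everywhere, which resolves it.
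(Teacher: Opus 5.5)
Your proof is correct and follows essentially the same route as the paper: the signs of $\partial_\alpha f$, $\partial_t f$ and $\partial_s f = \sin^2\alpha\,(1+t^2/s^2)$, the identity $f = s$ on the switching curve, and the monotonicity of $\alpha^\ast$ in $s$ (increasing) and in $t$ (decreasing) to handle branch crossings, which you spell out more carefully than the paper does. The only slip is in the $s$-crossing step, where the condition at the smaller argument should read $\alpha \ge \alpha^\ast(s,t)$ (second branch, $g=s$) rather than $\alpha \le \alpha^\ast(s,t)$; your entry-point argument with $\tilde s \ge s$ already uses the correct inequality.
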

\begin{proof}
 Firstly, $\frac{\partial}{\partial \alpha} f(s,t,\alpha) \geq 0$ for $\alpha \leq \alpha^\ast(s,t)$ and $\lim_{\alpha\rightarrow \alpha^\ast(s,t)} f(s,t,\alpha) = s$. Thus, the function $g(s,t,\alpha)$ is continuous and monotonically non-decreasing in $\alpha$.
 Secondly, since further $\frac{\partial}{\partial s} f(s,t,\alpha) = \frac{s^2 + t^2}{s^2} \sin(\alpha)^2 \geq 0$ (and $\frac{\partial}{\partial s} \alpha^\ast(s,t) \geq 0$), the function $g(s,t,\alpha)$ is monotonically non-decreasing in $s$. Thirdly, it is $\frac{\partial}{\partial t} f(s,t,\alpha) \geq 0$ for $\alpha \leq \alpha^\ast(s,t)$ (and $\frac{\partial}{\partial t} \alpha^\ast(s,t) \leq 0$). Thus, the function $g(s,t,\alpha)$ is also monotonically non-decreasing in $t$. 
\end{proof}
\begin{proof}[Proof of \Cref{thm:pertbound}]
Let $f(s,t,\alpha)$ and $g(s,t,\alpha)$ be as in \cref{lem:gfun} and $\mathcal{X} = \mathrm{span}\{\ten v,\ten p\}$ with Ritz values $a \geq b$, as well as $\beta := \angle_{\ten A}(\ten v, \ten p)$ and $\kappa := \mu(\ten v)$. If $\mu(\ten v) = b$ is already minimal within $\mathcal{X}$, then the inequality trivially holds true since the righthand side is always non-negative. Otherwise, with \cref{lem:temple}, we can replace
 \begin{equation}
  a = \frac{\varrho|_\mathcal{X}(\ten v)^2}{\mu(\ten v) - b} + \mu(\ten v),
 \end{equation}
which when inserted into \cref{eq:prop:X} yields 
\begin{equation}
 \mu(\ten v) - \mu(\ten p) = f(\mu(\ten v)-b,\varrho|_\mathcal{X}(\ten v),\beta) \leq g(\mu(\ten v)-b,\varrho|_\mathcal{X}(\ten v),\beta).
\end{equation}
Due to $-b \leq -L$ and $\varrho|_\mathcal{X}(\ten v) \leq \varrho(\ten v)$, as well as $\beta \leq \alpha$, we obtain
\begin{equation}
 g(\mu(\ten v)-b,\varrho|_\mathcal{X}(\ten v),\beta) \leq g(\mu(\ten v)-L,\varrho(\ten v),\alpha).
\end{equation}
Thus, for $\displaystyle \alpha \leq \mathrm{arccot} \left( \frac{\varrho(\ten v)}{\mu(\ten v) - L} \right)$, we have
\begin{equation*}
 (\mu(\ten v) - L) - (\mu(\ten p) - L) = \mu(\ten v) - \mu(\ten p)
 \leq (\mu(\ten p) - L) ( 1 - ( \cos(\alpha) - \frac{\varrho(\ten v)}{\mu(\ten v ) - L} \sin(\alpha) )^2 ).
\end{equation*}
For larger angles, it simply follows $(\mu(\ten v) - L) - (\mu(\ten p) - L) \geq 0$. Rearranging this result then directly yields \cref{eq:thm:pertbound}.
\end{proof}

\subsection{Angle bound}\label{sec:anglebound}
Using a similar approach and same notation as in \cref{sec:rayqup}, we derive the basis for \cref{sec:aposerrcon}. 
Throughout this section, we denote the two largest eigenvalues of $\ten B$ with $\mu_1$ and $\mu_2$ and assume that the eigenspace corresponding to $\mu_1$ is one-dimensional and spanned by $\ten{v}_1 \in V$. The statements of the following \cref{sinbound,qbound} further hold as equality if $\ten x$ is in the span of the first two eigenvectors and $\gamma = \mu_2 / \mu_1$.
\begin{lemma}\label{sinbound}
Let $\gamma \in [0,1)$ for which $\gamma \mu_1 \geq \mu_2$. Then 
\begin{align*}
 \sin^2 \angle_{\ten A}(\ten{v}_1,\ten x) \leq \frac{1-\frac{\mu(\ten x)}{\mu_1}}{1-\gamma}. 
\end{align*}
In particular, if ${\mu(\ten x)} \geq q \mu_1$, $q \in [0,1]$, then
\begin{align*}
 \sin^2 \angle_{\ten A}(\ten{v}_1,\ten x) \leq \frac{1-q}{1-\gamma}\,.
\end{align*}
\end{lemma}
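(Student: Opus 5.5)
We expand $\ten x$ in an $\ten A$-orthonormal eigenbasis of the $\ten A$-self-adjoint operator $\ten B$. Write $\ten x = \sum_i \xi_i \ten v_i$, where $\ten v_1$ spans the eigenspace of $\mu_1$ and the remaining $\ten v_i$ belong to eigenvalues $\mu_i \le \mu_2$. Without loss of generality we normalize $\|\ten x\|_{\ten A} = 1$, so that $\sum_i \xi_i^2 = 1$. Then $\cos^2\angle_{\ten A}(\ten v_1,\ten x) = \xi_1^2$ and $\sin^2\angle_{\ten A}(\ten v_1,\ten x) = \sum_{i\ge 2}\xi_i^2 =: s^2$.

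The key step is an upper bound for the Rayleigh quotient in terms of $s^2$. Using $\mu_i \le \mu_2 \le \gamma\mu_1$ for $i\ge2$, we get
\[
 \mu(\ten x) = \mu_1 \xi_1^2 + \sum_{i\ge 2}\mu_i \xi_i^2 \le \mu_1 (1-s^2) + \gamma \mu_1 s^2 = \mu_1\bigl(1 - (1-\gamma)s^2\bigr).
\]
Dividing by $\mu_1>0$ (positivity holds since $\ten B = \ten A^{-1}\E$ is positive definite with respect to $\langle\cdot,\cdot\rangle_{\ten A}$) and rearranging gives $(1-\gamma)s^2 \le 1 - \mu(\ten x)/\mu_1$. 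Since $1-\gamma>0$, dividing yields the first claim. If the spectrum is not purely discrete, the same argument goes through with the spectral measure in place of the sum. The one point to check is that only $\mu_i\le\mu_2$ is used for $i\ge 2$; this holds because $\mu_1$ is simple and $\mu_2$ is the second largest eigenvalue.

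The second claim is immediate: $\mu(\ten x)\ge q\mu_1$ gives $1-\mu(\ten x)/\mu_1 \le 1-q$, which we insert into the first bound. For the equality statement, note that if $\ten x\in\operatorname{span}\{\ten v_1,\ten v_2\}$ and $\gamma=\mu_2/\mu_1$, the single inequality used above becomes an equality. There is no real obstacle here; the only care needed is keeping track of signs, namely $\mu_1>0$ and $1-\gamma>0$.
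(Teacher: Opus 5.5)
Your proof is correct, but it takes a different route from the paper.

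You expand $\ten x$ in a full $\ten A$-orthonormal eigenbasis of $\ten B$ and bound the Rayleigh quotient directly, $\mu(\ten x) \le \mu_1\bigl(1-(1-\gamma)s^2\bigr)$.

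The paper never uses the whole spectral decomposition. It works only in the two-dimensional subspace $\mathcal X = \Span\{\ten v_1,\ten x\}$, and argues as follows:
\begin{itemize}
\item The larger Ritz value there is $a=\mu_1$, because $\ten v_1\in\mathcal X$ is an eigenvector.
\item The smaller Ritz value satisfies $b\le\mu_2$, because its Ritz vector is $\ten A$-orthogonal to $\ten v_1$.
\item The coordinate relations \eqref{eq:a12ast} from the proof of \cref{prop:X} then give the exact identity $\sin^2\angle_{\ten A}(\ten v_1,\ten x)=(1-q_0)/(1-\gamma_0)$, with $q_0=\mu(\ten x)/\mu_1$ and $\gamma_0=b/\mu_1\le\gamma$.
\item The bound follows by monotonicity in $\gamma_0$.
\end{itemize}

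Your version is shorter and self-contained, and it makes the equality case immediate.

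The paper's version fits the appendix's uniform strategy of reducing every estimate to a two-dimensional Ritz problem. More importantly, it produces the quantities $q_0,\gamma_0$ and the exact identity on $\mathcal X$, which the proof of \cref{qbound} reuses verbatim together with \cref{lem:temple} on the same subspace. Your argument would not directly supply that.

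Both arguments rest on the same tacit assumptions. One is $\mu_1>0$. The other is that the rest of the spectrum lies at or below $\mu_2$; for non-discrete spectrum, $\mu_2$ has to be read as $\sup\bigl(\sigma(\ten B)\setminus\{\mu_1\}\bigr)$, both in your spectral-measure remark and in the paper's step $b\le\mu_2$.
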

\begin{proof}
 Let $\mathcal{X} = \mathrm{span}\{\ten v_1, \ten x\}$. If $\mathcal{X}$ is one-dimensional, then the statements become trivial. Otherwise, denote with $a = \mu_1$ and $b < a$ its corresponding Ritz values. Let further 
$q_0 := \mu(\ten x) / a < 1$ as well as $\gamma_0 := b/a$. Then, as $a$ equals the largest eigenvalue, it must $b \leq \mu_2$ and consequently $b /a = \gamma_0 \leq \mu_2 / \mu_1 < 1$. With regard to the relations stated in \Cref{eq:a12ast} for $ \kappa := \mu(\ten x)$, we have
\begin{align}\label{eq:sinqgamma}
 \sin^2 \angle_{\ten A}(\ten{v}_1,\ten x) = a_2^2 = \frac{a - \mu(\ten x)}{a - b} = \frac{1-q_0}{1-\gamma_0}.
\end{align}
The given bounds then follow immediately due to $1 \geq q_0 \geq q$ and $\gamma_0 \leq \gamma < 1$.
\end{proof}

\begin{lemma}\label{qbound}
 Let $\gamma \in [0,1)$ for which $\gamma \mu_1 \geq \mu_2$. Assume that 
 \begin{align}\label{qassumptions}
  \tau(\ten x) := \frac{\varrho(\ten x)}{\mu(\ten x)} \leq \frac{1 - \gamma}{2 \sqrt{\gamma}} 
 \quad \text{and}\quad  \sin^2 \angle_{\ten A}(\ten{v}_1,\ten x) \leq \frac{1}{1+\gamma}.
 \end{align}
 Then
 \begin{align*}
  \frac{\mu(\ten x)}{\mu_1} \geq q(\gamma, \tau(\ten x)), \quad q(\gamma, \tau(\ten x)) := \frac{\gamma + 1 + \sqrt{ (\gamma - 1)^2 -4\gamma \tau(\ten x)^2}}{2 (\tau(\ten x)^2 + 1)}.
 \end{align*}
\end{lemma}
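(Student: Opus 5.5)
The plan is to reduce to the two-dimensional subspace $\mathcal{X} = \mathrm{span}\{\ten v_1, \ten x\}$, exactly as in the proof of \Cref{sinbound}. If $\mathcal{X}$ is one-dimensional, then $\ten x$ is a multiple of $\ten v_1$, so $\mu(\ten x) = \mu_1$ and $\tau(\ten x)=0$. In that case $q(\gamma,0) = \bigl(\gamma+1+(1-\gamma)\bigr)/2 = 1$, and the claim is trivial. Otherwise, let $a = \mu_1 \geq b$ be the Ritz values of $\ten B$ on $\mathcal{X}$, and set $q_0 := \mu(\ten x)/a$ and $\gamma_0 := b/a \leq \mu_2/\mu_1 \leq \gamma$, as in \Cref{sinbound}.

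\Cref{lem:temple} together with $\varrho_{\mathcal X}(\ten x) \leq \varrho(\ten x)$ gives
\[ (a-\mu(\ten x))(\mu(\ten x)-b) \leq \varrho(\ten x)^2 = \tau(\ten x)^2 \mu(\ten x)^2 . \]
Dividing by $a^2$, this becomes the quadratic inequality
\[ p_{\gamma_0}(q_0) := (\tau^2+1)\,q_0^2 - (1+\gamma_0)\,q_0 + \gamma_0 \geq 0, \qquad \tau = \tau(\ten x). \]
The roots of $p_{\gamma_0}$ are
\[ r_\pm(\gamma_0) = \frac{1+\gamma_0 \pm \sqrt{(1-\gamma_0)^2 - 4\gamma_0\tau^2}}{2(\tau^2+1)}. \]
The discriminant $(1-\gamma_0)^2-4\gamma_0\tau^2$ is decreasing in $\gamma_0 \in [0,1)$. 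Hence the first assumption in \eqref{qassumptions}, which says precisely that this discriminant is nonnegative at $\gamma_0=\gamma$, guarantees real roots for every $\gamma_0 \leq \gamma$. Consequently either $q_0 \leq r_-(\gamma_0)$ or $q_0 \geq r_+(\gamma_0)$.

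Next I show $r_+$ is nonincreasing in $\gamma_0$. Differentiating the numerator gives
\[ 1 - \frac{(1-\gamma_0)+2\tau^2}{\sqrt{(1-\gamma_0)^2-4\gamma_0\tau^2}} , \]
and this is $\leq 0$ because the square root is at most $1-\gamma_0$. Therefore $r_+(\gamma_0) \geq r_+(\gamma) = q(\gamma,\tau(\ten x))$. So it suffices to rule out the lower branch, i.e. to show $q_0 > r_-(\gamma_0)$, or at least $q_0 \geq r_+(\gamma_0)$.

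This exclusion of the lower branch is the step I expect to be the main obstacle, and it is where the angle assumption enters. By \eqref{eq:sinqgamma}, $\sin^2\angle_{\ten A}(\ten v_1,\ten x) = (1-q_0)/(1-\gamma_0)$. The second assumption in \eqref{qassumptions} then yields
\[ q_0 \geq 1 - \frac{1-\gamma_0}{1+\gamma} \geq \frac{2\gamma_0}{1+\gamma_0}, \]
where the last step uses $\gamma_0 \leq \gamma$.

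I would first try to show that this lower bound is at least the vertex $(1+\gamma_0)/(2(\tau^2+1))$ of the parabola $p_{\gamma_0}$. If that holds, $q_0$ lies to the right of the vertex, so $p_{\gamma_0}(q_0) \geq 0$ forces $q_0 \geq r_+(\gamma_0)$. Should this comparison fail for small $\gamma_0$, the fallback is a direct comparison with $r_-(\gamma_0)$. For this I would use that the product of the roots is $\gamma_0/(1+\tau^2)$ and that $r_+(\gamma_0) \geq (1+\gamma_0)/(2(1+\tau^2))$, which gives the estimate
\[ r_-(\gamma_0) \leq \frac{2\gamma_0}{1+\gamma_0}. \]
Equality in the angle condition can then only place $q_0$ at the common boundary point, where it coincides with the admissible branch. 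A continuity argument along the segment from $\ten v_1$ to $\ten x$ inside $\mathcal X$ would serve as a last resort: $q_0$ starts at $1 \geq r_+$ and cannot jump over the excluded open interval $(r_-,r_+)$ while the angle bound holds.

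Combining $q_0 \geq r_+(\gamma_0) \geq r_+(\gamma) = q(\gamma,\tau(\ten x))$ gives the assertion $\mu(\ten x)/\mu_1 \geq q(\gamma,\tau(\ten x))$.
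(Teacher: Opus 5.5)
Your proposal is correct and follows essentially the paper's route: restrict to $\mathcal X=\mathrm{span}\{\ten v_1,\ten x\}$, turn \Cref{lem:temple} into a quadratic condition on $q_0$, and use the angle assumption to get $q_0\ge 2\gamma_0/(1+\gamma_0)\ge r_-(\gamma_0)$. The second inequality comes from your product-of-roots fallback, where equality forces a double root; the vertex comparison you try first cannot work in general, since for $\gamma_0=\gamma$ the bound $2\gamma/(1+\gamma)$ lies strictly left of the vertex unless the discriminant vanishes. You then conclude by monotonicity in $\gamma_0$. The only difference is that you insert $\varrho_{\mathcal X}\le\varrho$ before solving and so work with an inequality in $\tau(\ten x)$ directly, whereas the paper solves the exact identity with $\tau_{\mathcal X}$ and then also needs monotonicity of $q$ in its second argument.
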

\begin{proof}
The function $q(s,t)$ is well-defined for $0 \leq t \leq (1-s)/(2\sqrt{s})$, $s \in [0,1)$, as
the unique solution $z$ to the polynomial equation $(1-z)(z-s) = t^2 z^2$ subject to $z \geq 2s/(s+1)$. Note that the case where both solutions coincide at $z = 2s/(s+1)$ corresponds to $t = (1-s)/(2\sqrt{s})$.
One easily checks that $z = q(s,t)$ is monotonically non-increasing in both $s$ and in $t$. We now continue based on the notation and assertions in the proof of \Cref{sinbound}. 
For $\tau|_\mathcal{X} := \varrho|_\mathcal{X}(\ten x) / \mu(\ten x)$, \Cref{lem:temple} by division through $a^2 = \mu_1^2$  yields 
\begin{align*}
 (1-q_0)(q_0-\gamma_0) = \frac{a - \mu(\ten x)}{a} \frac{\mu(\ten x) - b}{a} = \frac{\varrho|_\mathcal{X}(\ten x)^2}{\mu(\ten x)^2} \frac{\mu(\ten x)^2}{a^2} = \tau|_\mathcal{X}^2 q_0^2. %
\end{align*}
With the second assumption of \cref{qassumptions}, \cref{eq:sinqgamma} and $\gamma \geq \gamma_0$, it further follows that 
\begin{align*}
 q_0 = 1 - \sin^2 \angle_{\ten A}(\ten{v}_1,\ten x) (1-\gamma_0) \geq 1 - (1-\gamma_0) / (1+\gamma_0) \geq 2\gamma_0 / (1+\gamma_0).
\end{align*}
Thus, by the properties of $q(s,t)$, we obtain $q_0 = q(\gamma_0, \tau|_{\mathcal{X}})$. 
As $\tau|_\mathcal{X} \leq \tau(\ten x) \leq (1-\gamma)/(2\sqrt{\gamma}) \leq (1-\gamma_0)/(2\sqrt{\gamma_0})$ by $\ten x \in \mathcal{X}$ and the first assumption of \cref{qassumptions}, respectively, as well as $\gamma_0 \leq \gamma$ by definition, we have $q_0 = \mu(\ten x)/\mu_1 \geq q(\gamma,\tau(\ten x))$. 
\end{proof}
Then second condition of \cref{qassumptions} in \cref{qbound} is implied by 
 \begin{align}\label{gammacoresp}
  \frac{\mu(\ten x)}{\mu_1} \geq \frac{2\gamma}{\gamma + 1}, %
 \end{align}
 as by \cref{sinbound}, it directly follows that $\sin^2 \angle_{\ten A}(\ten{v}_1,\ten x) \leq (1-2\gamma/(\gamma + 1))/(1-\gamma) = 1/(1+\gamma)$. This corresponds to the condition \cref{eq:imply2ndeq}.

\end{appendix}
\end{document}

\FloatBarrier

\section{All experiments}

\newpage  \FloatBarrier
\begin{itemize}
 \item $K = \exponeoneKlowDoneK$, $N = \exponeoneKlowDoneN$, $D = \exponeoneKlowDoneD$, $\gamma = \exponeoneKlowDonegamma$,  $\mathrm{maxwavelvl} = \exponeoneKlowDonemmax$
 \item $D_{\mathrm{aux}} = \diag(\exponeoneKlowDonetone(\ldots), \exponeoneKlowDonettwo(\ldots),\ldots,\exponeoneKlowDonetKmone(\ldots), \exponeoneKlowDonetK(\ldots))$
 \item $[t_{\min},t_{\max}] = [\exponeoneKlowDonetmin(\ldots),\exponeoneKlowDonetmax(\ldots)]$, $t_{\max} / t_{\min} = \exponeoneKlowDoneT(\ldots)$
 \item $\delta := \exponeoneKlowDonekappa(\ldots)$, $c = \eiter = \exponeoneKlowDoneepsiter(\ldots)$, $\enum = \exponeoneKlowDoneepsnum(\ldots)$, $\#_S = \exponeoneKlowDonelengthS$
 \item $\lambda_1^{\mathrm{full}} = \exponeoneKlowDonelambdaonetrue$, $\lambda_1^{\mathrm{inner/outer}} = \exponeoneKlowDonelambdaoneIO$, $\lambda_1^{\mathrm{inner}} = \exponeoneKlowDonelambdaoneIonly$
\end{itemize}
\nolabeltikzfigure{h}{1}{exp11_KlowD1_bounds}{Eigenvalue error bounds.}
\nolabeltikzfigure{htb}{1}{exp11_KlowD1_ranks}{Maximal ranks of iterates by residual}
\nolabeltikzfigure{h}{1}{exp11_KlowD1_ranksres}{Maximal ranks of iterates/residuals by iteration}
\nolabeltikzfigure{h}{1}{exp11_KlowD1_ranksheatmap}{Maximal ranks of iterates/residuals by iteration}
\nolabeltikzfigure{h}{1}{exp11_KlowD1_ranksheatmap_plain}{Maximal ranks of iterates/residuals by iteration}
\FloatBarrier

\newpage \mbox{}\newpage  \newpage \FloatBarrier
\input{numbers/exp12_KhighD1_data.txt}
\begin{itemize}
 \item $K = \exponetwoKhighDoneK$, $N = \exponetwoKhighDoneN$, $D = \exponetwoKhighDoneD$, $\gamma = \exponetwoKhighDonegamma$,  $\mathrm{maxwavelvl} = \exponetwoKhighDonemmax$
 \item $D_{\mathrm{aux}} = \diag(\exponetwoKhighDonetone(\ldots), \exponetwoKhighDonettwo(\ldots),\ldots,\exponetwoKhighDonetKmone(\ldots), \exponetwoKhighDonetK(\ldots))$
 \item $[t_{\min},t_{\max}] = [\exponetwoKhighDonetmin(\ldots),\exponetwoKhighDonetmax(\ldots)]$, $t_{\max} / t_{\min} = \exponetwoKhighDoneT(\ldots)$
 \item $\delta := \exponetwoKhighDonekappa(\ldots)$, $c = \eiter = \exponetwoKhighDoneepsiter(\ldots)$, $\enum = \exponetwoKhighDoneepsnum(\ldots)$, $\#_S = \exponetwoKhighDonelengthS$
 \item $\lambda_1^{\mathrm{inner/outer}} = \exponetwoKhighDonelambdaoneIO$, $\lambda_1^{\mathrm{inner}} = \exponetwoKhighDonelambdaoneIonly$
\end{itemize}
\input{numbers/exp121_KhighD1mminus_data.txt}\vspace{0.5cm}
\begin{itemize}
 \item $K = \exponetwooneKhighDonemminusK$, $N = \exponetwooneKhighDonemminusN$, $D = \exponetwooneKhighDonemminusD$, $\gamma = \exponetwooneKhighDonemminusgamma$,  $\mathbf{\mathrm{maxwavelvl} = \exponetwooneKhighDonemminusmmax}$
 \item $D_{\mathrm{aux}} = \diag(\exponetwooneKhighDonemminustone(\ldots), \exponetwooneKhighDonemminusttwo(\ldots),\ldots,\exponetwooneKhighDonemminustKmone(\ldots), \exponetwooneKhighDonemminustK(\ldots))$
 \item $[t_{\min},t_{\max}] = [\exponetwooneKhighDonemminustmin(\ldots),\exponetwooneKhighDonemminustmax(\ldots)]$, $t_{\max} / t_{\min} = \exponetwooneKhighDonemminusT(\ldots)$
 \item $\delta := \exponetwooneKhighDonemminuskappa(\ldots)$, $c = \eiter = \exponetwooneKhighDonemminusepsiter(\ldots)$, $\enum = \exponetwooneKhighDonemminusepsnum(\ldots)$, $\#_S = \exponetwooneKhighDonemminuslengthS$
 \item $\lambda_1^{\mathrm{inner/outer}} = \exponetwooneKhighDonemminuslambdaoneIO$, $\lambda_1^{\mathrm{inner}} = \exponetwooneKhighDonemminuslambdaoneIonly$
\end{itemize}
\nolabeltikzfigure{htb}{1}{exp12_KhighD1_ranks}{Maximal ranks of iterates by residual}
\nolabeltikzfigure{h}{1}{exp12_KhighD1_ranksres}{Maximal ranks of iterates/residuals by iteration}
\nolabeltikzfigure{h}{1}{exp12_KhighD1_ranksheatmap}{Maximal ranks of iterates/residuals by iteration}
\nolabeltikzfigure{h}{1}{exp12_KhighD1_ranksheatmap_plain}{Maximal ranks of iterates/residuals by iteration}
\FloatBarrier

\newpage \mbox{}\newpage  \FloatBarrier
\input{numbers/exp13_KhigherD1_data.txt}
\begin{itemize}
 \item $K = \exponethreeKhigherDoneK$, $N = \exponethreeKhigherDoneN$, $D = \exponethreeKhigherDoneD$, $\gamma = \exponethreeKhigherDonegamma$,  $\mathrm{maxwavelvl} = \exponethreeKhigherDonemmax$
 \item $D_{\mathrm{aux}} = \diag(\exponethreeKhigherDonetone(\ldots), \exponethreeKhigherDonettwo(\ldots),\ldots,\exponethreeKhigherDonetKmone(\ldots), \exponethreeKhigherDonetK(\ldots))$
 \item $[t_{\min},t_{\max}] = [\exponethreeKhigherDonetmin(\ldots),\exponethreeKhigherDonetmax(\ldots)]$, $t_{\max} / t_{\min} = \exponethreeKhigherDoneT(\ldots)$
 \item $\delta := \exponethreeKhigherDonekappa(\ldots)$, $c = \eiter = \exponethreeKhigherDoneepsiter(\ldots)$, $\enum = \exponethreeKhigherDoneepsnum(\ldots)$, $\#_S = \exponethreeKhigherDonelengthS$
 \item $\lambda_1^{\mathrm{inner/outer}} = \exponethreeKhigherDonelambdaoneIO$, $\lambda_1^{\mathrm{inner}} = \exponethreeKhigherDonelambdaoneIonly$
\end{itemize}
\nolabeltikzfigure{htb}{1}{exp13_KhigherD1_ranks}{Maximal ranks of iterates by residual}
\nolabeltikzfigure{h}{1}{exp13_KhigherD1_ranksres}{Maximal ranks of iterates/residuals by iteration}
\nolabeltikzfigure{h}{1}{exp13_KhigherD1_ranksheatmap}{Maximal ranks of iterates/residuals by iteration}
\FloatBarrier

\newpage \mbox{}\newpage  \FloatBarrier
\begin{itemize}
 \item $K = \exponesixKevenhigherDoneK$, $N = \exponesixKevenhigherDoneN$, $D = \exponesixKevenhigherDoneD$, $\gamma = \exponesixKevenhigherDonegamma$,  $\mathrm{maxwavelvl} = \exponesixKevenhigherDonemmax$
 \item $D_{\mathrm{aux}} = \diag(\exponesixKevenhigherDonetone(\ldots), \exponesixKevenhigherDonettwo(\ldots),\ldots,\exponesixKevenhigherDonetKmone(\ldots), \exponesixKevenhigherDonetK(\ldots))$
 \item $[t_{\min},t_{\max}] = [\exponesixKevenhigherDonetmin(\ldots),\exponesixKevenhigherDonetmax(\ldots)]$, $t_{\max} / t_{\min} = \exponesixKevenhigherDoneT(\ldots)$
 \item $\delta := \exponesixKevenhigherDonekappa(\ldots)$, $c = \eiter = \exponesixKevenhigherDoneepsiter(\ldots)$, $\enum = \exponesixKevenhigherDoneepsnum(\ldots)$, $\#_S = \exponesixKevenhigherDonelengthS$
 \item $\lambda_1^{\mathrm{inner/outer}} = \exponesixKevenhigherDonelambdaoneIO$, $\lambda_1^{\mathrm{inner}} = \exponesixKevenhigherDonelambdaoneIonly$
\end{itemize}
\nolabeltikzfigure{htb}{1}{exp16_KevenhigherD1_ranks}{Maximal ranks of iterates by residual}
\nolabeltikzfigure{h}{1}{exp16_KevenhigherD1_ranksres}{Maximal ranks of iterates/residuals by iteration}
\nolabeltikzfigure{h}{1}{exp16_KevenhigherD1_ranksheatmap}{Maximal ranks of iterates/residuals by iteration}
\nolabeltikzfigure{h}{1}{exp16_KevenhigherD1_ranksheatmap_plain}{Maximal ranks of iterates/residuals by iteration}
\FloatBarrier

 \newpage \mbox{}\newpage  \FloatBarrier
\input{numbers/exp14_KlowD1N6_data.txt}
\begin{itemize}
 \item $K = \exponefourKlowDoneNsixK$, $N = \exponefourKlowDoneNsixN$, $D = \exponefourKlowDoneNsixD$, $\gamma = \exponefourKlowDoneNsixgamma$,  $\mathrm{maxwavelvl} = \exponefourKlowDoneNsixmmax$
 \item $D_{\mathrm{aux}} = \diag(\exponefourKlowDoneNsixtone(\ldots), \exponefourKlowDoneNsixttwo(\ldots),\ldots,\exponefourKlowDoneNsixtKmone(\ldots), \exponefourKlowDoneNsixtK(\ldots))$
 \item $[t_{\min},t_{\max}] = [\exponefourKlowDoneNsixtmin(\ldots),\exponefourKlowDoneNsixtmax(\ldots)]$, $t_{\max} / t_{\min} = \exponefourKlowDoneNsixT(\ldots)$
 \item $\delta := \exponefourKlowDoneNsixkappa(\ldots)$, $c = \eiter = \exponefourKlowDoneNsixepsiter(\ldots)$, $\enum = \exponefourKlowDoneNsixepsnum(\ldots)$, $\#_S = \exponefourKlowDoneNsixlengthS$
 \item $\lambda_1^{\mathrm{inner/outer}} = \exponefourKlowDoneNsixlambdaoneIO$, $\lambda_1^{\mathrm{inner}} = \exponefourKlowDoneNsixlambdaoneIonly$
\end{itemize}
\nolabeltikzfigure{h}{1}{exp14_KlowD1N6_bounds}{Eigenvalue error bounds.}
\nolabeltikzfigure{h}{1}{exp14_KlowD1N6_ranks}{Maximal ranks of iterates by residual}
\nolabeltikzfigure{h}{1}{exp14_KlowD1N6_ranksres}{Maximal ranks of iterates/residuals by iteration}
\nolabeltikzfigure{h}{1}{exp14_KlowD1N6_ranksheatmap}{Maximal ranks of iterates/residuals by iteration}
\nolabeltikzfigure{h}{1}{exp14_KlowD1N6_ranksheatmap_plain}{Maximal ranks of iterates/residuals by iteration}
\FloatBarrier

 \newpage \mbox{}\newpage  \FloatBarrier
\input{numbers/exp21_KlowD2_data.txt}
\begin{itemize}
 \item $K = \exptwooneKlowDtwoK$, $N = \exptwooneKlowDtwoN$, $D = \exptwooneKlowDtwoD$, $\gamma = \exptwooneKlowDtwogamma$,  $\mathrm{maxwavelvl} = \exptwooneKlowDtwommax$
 \item $D_{\mathrm{aux}} = \diag(\exptwooneKlowDtwotone(\ldots), \exptwooneKlowDtwottwo(\ldots),\ldots,\exptwooneKlowDtwotKmone(\ldots), \exptwooneKlowDtwotK(\ldots))$
 \item $[t_{\min},t_{\max}] = [\exptwooneKlowDtwotmin(\ldots),\exptwooneKlowDtwotmax(\ldots)]$, $t_{\max} / t_{\min} = \exptwooneKlowDtwoT(\ldots)$
 \item $\delta := \exptwooneKlowDtwokappa(\ldots)$, $c = \eiter = \exptwooneKlowDtwoepsiter(\ldots)$, $\enum = \exptwooneKlowDtwoepsnum(\ldots)$, $\#_S = \exptwooneKlowDtwolengthS$
 \item 
 \begin{align*}
 (\lambda_i^{\mathrm{full}})_{i = 1}^{\exptwooneKlowDtwoD} & = \exptwooneKlowDtwolambdaDtrue \\
  (\lambda_i^{\mathrm{inner/outer}})_{i = 1}^{\exptwooneKlowDtwoD} & = \exptwooneKlowDtwolambdaDIO.
\end{align*}
\end{itemize}
\nolabeltikzfigure{htb}{1}{exp21_KlowD2_singleerrs}{Single Rayleigh Quotients}
\nolabeltikzfigure{h}{1}{exp21_KlowD2_ranksres}{Maximal ranks of iterates/residuals by iteration}
\nolabeltikzfigure{h}{1}{exp21_KlowD2_ranksheatmap}{Maximal ranks of iterates/residuals by iteration}
\FloatBarrier

 \newpage \mbox{}\newpage  \FloatBarrier
\input{numbers/exp22_KhighD2_data.txt}
\begin{itemize}
 \item $K = \exptwotwoKhighDtwoK$, $N = \exptwotwoKhighDtwoN$, $D = \exptwotwoKhighDtwoD$, $\gamma = \exptwotwoKhighDtwogamma$,  $\mathrm{maxwavelvl} = \exptwotwoKhighDtwommax$
 \item $D_{\mathrm{aux}} = \diag(\exptwotwoKhighDtwotone(\ldots), \exptwotwoKhighDtwottwo(\ldots),\ldots,\exptwotwoKhighDtwotKmone(\ldots), \exptwotwoKhighDtwotK(\ldots))$
 \item $[t_{\min},t_{\max}] = [\exptwotwoKhighDtwotmin(\ldots),\exptwotwoKhighDtwotmax(\ldots)]$, $t_{\max} / t_{\min} = \exptwotwoKhighDtwoT(\ldots)$
 \item $\delta := \exptwotwoKhighDtwokappa(\ldots)$, $c = \eiter = \exptwotwoKhighDtwoepsiter(\ldots)$, $\enum = \exptwotwoKhighDtwoepsnum(\ldots)$, $\#_S = \exptwotwoKhighDtwolengthS$
  \item 
 \begin{align*}
  (\lambda_i^{\mathrm{inner/outer}})_{i = 1}^{\exptwotwoKhighDtwoD} & = \exptwotwoKhighDtwolambdaDIO.
\end{align*}
\end{itemize}
\nolabeltikzfigure{htb}{1}{exp22_KhighD2_ranks}{Maximal ranks of iterates by residual}
\nolabeltikzfigure{h}{1}{exp22_KhighD2_ranksres}{Maximal ranks of iterates/residuals by iteration}
\nolabeltikzfigure{h}{1}{exp22_KhighD2_ranksheatmap}{Maximal ranks of iterates/residuals by iteration}
\FloatBarrier

 \newpage \mbox{}\newpage  \FloatBarrier
\begin{itemize}
 \item $K = \expfouroneKlowDfourK$, $N = \expfouroneKlowDfourN$, $D = \expfouroneKlowDfourD$, $\gamma = \expfouroneKlowDfourgamma$,  $\mathrm{maxwavelvl} = \expfouroneKlowDfourmmax$
 \item $D_{\mathrm{aux}} = \diag(\expfouroneKlowDfourtone(\ldots), \expfouroneKlowDfourttwo(\ldots),\ldots,\expfouroneKlowDfourtKmone(\ldots), \expfouroneKlowDfourtK(\ldots))$
 \item $[t_{\min},t_{\max}] = [\expfouroneKlowDfourtmin(\ldots),\expfouroneKlowDfourtmax(\ldots)]$, $t_{\max} / t_{\min} = \expfouroneKlowDfourT(\ldots)$
 \item $\delta := \expfouroneKlowDfourkappa(\ldots)$, $c = \eiter = \expfouroneKlowDfourepsiter(\ldots)$, $\enum = \expfouroneKlowDfourepsnum(\ldots)$, $\#_S = \expfouroneKlowDfourlengthS$
 \item 
 \begin{align*}
 (\lambda_i^{\mathrm{full}})_{i = 1}^{\expfouroneKlowDfourD} & = \expfouroneKlowDfourlambdaDtrue \\
  (\lambda_i^{\mathrm{inner/outer}})_{i = 1}^{\expfouroneKlowDfourD} & = \expfouroneKlowDfourlambdaDIO.
\end{align*}
\end{itemize}
\nolabeltikzfigure{htb}{1}{exp41_KlowD4_singleerrs}{Single Rayleigh Quotients}
\nolabeltikzfigure{h}{1}{exp41_KlowD4_ranksres}{Maximal ranks of iterates/residuals by iteration}
\nolabeltikzfigure{h}{1}{exp41_KlowD4_ranksheatmap}{Maximal ranks of iterates/residuals by iteration}
\FloatBarrier

 \newpage \mbox{}\newpage  \FloatBarrier
\input{numbers/exp42_KhighD4_data.txt}
\begin{itemize}
 \item $K = \expfourtwoKhighDfourK$, $N = \expfourtwoKhighDfourN$, $D = \expfourtwoKhighDfourD$, $\gamma = \expfourtwoKhighDfourgamma$,  $\mathrm{maxwavelvl} = \expfourtwoKhighDfourmmax$
 \item $D_{\mathrm{aux}} = \diag(\expfourtwoKhighDfourtone(\ldots), \expfourtwoKhighDfourttwo(\ldots),\ldots,\expfourtwoKhighDfourtKmone(\ldots), \expfourtwoKhighDfourtK(\ldots))$
 \item $[t_{\min},t_{\max}] = [\expfourtwoKhighDfourtmin(\ldots),\expfourtwoKhighDfourtmax(\ldots)]$, $t_{\max} / t_{\min} = \expfourtwoKhighDfourT(\ldots)$
 \item $\delta := \expfourtwoKhighDfourkappa(\ldots)$, $c = \eiter = \expfourtwoKhighDfourepsiter(\ldots)$, $\enum = \expfourtwoKhighDfourepsnum(\ldots)$, $\#_S = \expfourtwoKhighDfourlengthS$
  \item 
 \begin{align*}
  (\lambda_i^{\mathrm{inner/outer}})_{i = 1}^{\expfourtwoKhighDfourD} & = \expfourtwoKhighDfourlambdaDIO.
\end{align*}
\end{itemize}
\nolabeltikzfigure{htb}{1}{exp42_KhighD4_ranks}{Maximal ranks of iterates by residual}
\nolabeltikzfigure{h}{1}{exp42_KhighD4_ranksres}{Maximal ranks of iterates/residuals by iteration}
\nolabeltikzfigure{h}{1}{exp42_KhighD4_ranksheatmap}{Maximal ranks of iterates/residuals by iteration}
\FloatBarrier

 \newpage \mbox{}\newpage  \FloatBarrier
\input{numbers/exp43_KhigherD4_data.txt}
\begin{itemize}
 \item $K = \expfourthreeKhigherDfourK$, $N = \expfourthreeKhigherDfourN$, $D = \expfourthreeKhigherDfourD$, $\gamma = \expfourthreeKhigherDfourgamma$,  $\mathrm{maxwavelvl} = \expfourthreeKhigherDfourmmax$
 \item $D_{\mathrm{aux}} = \diag(\expfourthreeKhigherDfourtone(\ldots), \expfourthreeKhigherDfourttwo(\ldots),\ldots,\expfourthreeKhigherDfourtKmone(\ldots), \expfourthreeKhigherDfourtK(\ldots))$
 \item $[t_{\min},t_{\max}] = [\expfourthreeKhigherDfourtmin(\ldots),\expfourthreeKhigherDfourtmax(\ldots)]$, $t_{\max} / t_{\min} = \expfourthreeKhigherDfourT(\ldots)$
 \item $\delta := \expfourthreeKhigherDfourkappa(\ldots)$, $c = \eiter = \expfourthreeKhigherDfourepsiter(\ldots)$, $\enum = \expfourthreeKhigherDfourepsnum(\ldots)$, $\#_S = \expfourthreeKhigherDfourlengthS$
  \item 
 \begin{align*}
  (\lambda_i^{\mathrm{inner/outer}})_{i = 1}^{\expfourthreeKhigherDfourD} & = \expfourthreeKhigherDfourlambdaDIO.
\end{align*}
\end{itemize}
\nolabeltikzfigure{htb}{1}{exp43_KhigherD4_ranks}{Maximal ranks of iterates by residual}
\nolabeltikzfigure{h}{1}{exp43_KhigherD4_ranksres}{Maximal ranks of iterates/residuals by iteration}
\nolabeltikzfigure{h}{1}{exp43_KhigherD4_ranksheatmap}{Maximal ranks of iterates/residuals by iteration}
\FloatBarrier

 \newpage \mbox{}\newpage  \FloatBarrier
\begin{itemize}
 \item $K = \expfoursixKevenhigherDfourK$, $N = \expfoursixKevenhigherDfourN$, $D = \expfoursixKevenhigherDfourD$, $\gamma = \expfoursixKevenhigherDfourgamma$,  $\mathrm{maxwavelvl} = \expfoursixKevenhigherDfourmmax$
 \item $D_{\mathrm{aux}} = \diag(\expfoursixKevenhigherDfourtone(\ldots), \expfoursixKevenhigherDfourttwo(\ldots),\ldots,\expfoursixKevenhigherDfourtKmone(\ldots), \expfoursixKevenhigherDfourtK(\ldots))$
 \item $[t_{\min},t_{\max}] = [\expfoursixKevenhigherDfourtmin(\ldots),\expfoursixKevenhigherDfourtmax(\ldots)]$, $t_{\max} / t_{\min} = \expfoursixKevenhigherDfourT(\ldots)$
 \item $\delta := \expfoursixKevenhigherDfourkappa(\ldots)$, $c = \eiter = \expfoursixKevenhigherDfourepsiter(\ldots)$, $\enum = \expfoursixKevenhigherDfourepsnum(\ldots)$, $\#_S = \expfoursixKevenhigherDfourlengthS$
  \item 
 \begin{align*}
  (\lambda_i^{\mathrm{inner/outer}})_{i = 1}^{\expfoursixKevenhigherDfourD} & = \expfoursixKevenhigherDfourlambdaDIO.
\end{align*}
\end{itemize}
\nolabeltikzfigure{htb}{1}{exp46_KevenhigherD4_ranks}{Maximal ranks of iterates by residual}
\nolabeltikzfigure{h}{1}{exp46_KevenhigherD4_ranksres}{Maximal ranks of iterates/residuals by iteration}
\nolabeltikzfigure{h}{1}{exp46_KevenhigherD4_ranksheatmap}{Maximal ranks of iterates/residuals by iteration}
\FloatBarrier

 \newpage \mbox{}\newpage  \FloatBarrier
\input{numbers/exp44_KlowD4N6_data.txt}
\begin{itemize}
 \item $K = \expfourfourKlowDfourNsixK$, $N = \expfourfourKlowDfourNsixN$, $D = \expfourfourKlowDfourNsixD$, $\gamma = \expfourfourKlowDfourNsixgamma$,  $\mathrm{maxwavelvl} = \expfourfourKlowDfourNsixmmax$
 \item $D_{\mathrm{aux}} = \diag(\expfourfourKlowDfourNsixtone(\ldots), \expfourfourKlowDfourNsixttwo(\ldots),\ldots,\expfourfourKlowDfourNsixtKmone(\ldots), \expfourfourKlowDfourNsixtK(\ldots))$
 \item $[t_{\min},t_{\max}] = [\expfourfourKlowDfourNsixtmin(\ldots),\expfourfourKlowDfourNsixtmax(\ldots)]$, $t_{\max} / t_{\min} = \expfourfourKlowDfourNsixT(\ldots)$
 \item $\delta := \expfourfourKlowDfourNsixkappa(\ldots)$, $c = \eiter = \expfourfourKlowDfourNsixepsiter(\ldots)$, $\enum = \expfourfourKlowDfourNsixepsnum(\ldots)$, $\#_S = \expfourfourKlowDfourNsixlengthS$
 \item 
 \begin{align*}
 (\lambda_i^{\mathrm{full}})_{i = 1}^{\expfourfourKlowDfourNsixD} & = \expfourfourKlowDfourNsixlambdaDtrue \\
  (\lambda_i^{\mathrm{inner/outer}})_{i = 1}^{\expfourfourKlowDfourNsixD} & = \expfourfourKlowDfourNsixlambdaDIO.
\end{align*}
\end{itemize}
\nolabeltikzfigure{htb}{1}{exp44_KlowD4N6_singleerrs}{Single Rayleigh Quotients}
\nolabeltikzfigure{h}{1}{exp44_KlowD4N6_ranksres}{Maximal ranks of iterates/residuals by iteration}
\nolabeltikzfigure{h}{1}{exp44_KlowD4N6_ranksheatmap}{Maximal ranks of iterates/residuals by iteration}
\FloatBarrier

 \newpage \mbox{}\newpage  \FloatBarrier
\input{numbers/exp61_KlowD6_data.txt}
\begin{itemize}
 \item $K = \expsixoneKlowDsixK$, $N = \expsixoneKlowDsixN$, $D = \expsixoneKlowDsixD$, $\gamma = \expsixoneKlowDsixgamma$,  $\mathrm{maxwavelvl} = \expsixoneKlowDsixmmax$
 \item $D_{\mathrm{aux}} = \diag(\expsixoneKlowDsixtone(\ldots), \expsixoneKlowDsixttwo(\ldots),\ldots,\expsixoneKlowDsixtKmone(\ldots), \expsixoneKlowDsixtK(\ldots))$
 \item $[t_{\min},t_{\max}] = [\expsixoneKlowDsixtmin(\ldots),\expsixoneKlowDsixtmax(\ldots)]$, $t_{\max} / t_{\min} = \expsixoneKlowDsixT(\ldots)$
 \item $\delta := \expsixoneKlowDsixkappa(\ldots)$, $c = \eiter = \expsixoneKlowDsixepsiter(\ldots)$, $\enum = \expsixoneKlowDsixepsnum(\ldots)$, $\#_S = \expsixoneKlowDsixlengthS$
\item 
 \begin{align*}
 (\lambda_i^{\mathrm{full}})_{i = 1}^{\expsixoneKlowDsixD} & = \expsixoneKlowDsixlambdaDtrue \\
  (\lambda_i^{\mathrm{inner/outer}})_{i = 1}^{\expsixoneKlowDsixD} & = \expsixoneKlowDsixlambdaDIO.
\end{align*}
\end{itemize}
\nolabeltikzfigure{htb}{1}{exp61_KlowD6_singleerrs}{Single Rayleigh Quotients}
\nolabeltikzfigure{h}{1}{exp61_KlowD6_ranksres}{Maximal ranks of iterates/residuals by iteration}
\nolabeltikzfigure{h}{1}{exp61_KlowD6_ranksheatmap}{Maximal ranks of iterates/residuals by iteration}
\FloatBarrier

 \newpage \mbox{}\newpage  \FloatBarrier
\input{numbers/exp62_KhighD6_data.txt}
\begin{itemize}
 \item $K = \expsixtwoKhighDsixK$, $N = \expsixtwoKhighDsixN$, $D = \expsixtwoKhighDsixD$, $\gamma = \expsixtwoKhighDsixgamma$,  $\mathrm{maxwavelvl} = \expsixtwoKhighDsixmmax$
 \item $D_{\mathrm{aux}} = \diag(\expsixtwoKhighDsixtone(\ldots), \expsixtwoKhighDsixttwo(\ldots),\ldots,\expsixtwoKhighDsixtKmone(\ldots), \expsixtwoKhighDsixtK(\ldots))$
 \item $[t_{\min},t_{\max}] = [\expsixtwoKhighDsixtmin(\ldots),\expsixtwoKhighDsixtmax(\ldots)]$, $t_{\max} / t_{\min} = \expsixtwoKhighDsixT(\ldots)$
 \item $\delta := \expsixtwoKhighDsixkappa(\ldots)$, $c = \eiter = \expsixtwoKhighDsixepsiter(\ldots)$, $\enum = \expsixtwoKhighDsixepsnum(\ldots)$, $\#_S = \expsixtwoKhighDsixlengthS$
  \item 
 \begin{align*}
  (\lambda_i^{\mathrm{inner/outer}})_{i = 1}^{\expsixtwoKhighDsixD} & = \expsixtwoKhighDsixlambdaDIO.
\end{align*}
\end{itemize}
\nolabeltikzfigure{htb}{1}{exp62_KhighD6_ranks}{Maximal ranks of iterates by residual}
\nolabeltikzfigure{h}{1}{exp62_KhighD6_ranksres}{Maximal ranks of iterates/residuals by iteration}
\nolabeltikzfigure{h}{1}{exp62_KhighD6_ranksheatmap}{Maximal ranks of iterates/residuals by iteration}
\FloatBarrier

 \newpage \mbox{}\newpage  \FloatBarrier
\input{numbers/exp63_KhigherD6_data.txt}
\begin{itemize}
 \item $K = \expsixthreeKhigherDsixK$, $N = \expsixthreeKhigherDsixN$, $D = \expsixthreeKhigherDsixD$, $\gamma = \expsixthreeKhigherDsixgamma$,  $\mathrm{maxwavelvl} = \expsixthreeKhigherDsixmmax$
 \item $D_{\mathrm{aux}} = \diag(\expsixthreeKhigherDsixtone(\ldots), \expsixthreeKhigherDsixttwo(\ldots),\ldots,\expsixthreeKhigherDsixtKmone(\ldots), \expsixthreeKhigherDsixtK(\ldots))$
 \item $[t_{\min},t_{\max}] = [\expsixthreeKhigherDsixtmin(\ldots),\expsixthreeKhigherDsixtmax(\ldots)]$, $t_{\max} / t_{\min} = \expsixthreeKhigherDsixT(\ldots)$
 \item $\delta := \expsixthreeKhigherDsixkappa(\ldots)$, $c = \eiter = \expsixthreeKhigherDsixepsiter(\ldots)$, $\enum = \expsixthreeKhigherDsixepsnum(\ldots)$, $\#_S = \expsixthreeKhigherDsixlengthS$
  \item 
 \begin{align*}
  (\lambda_i^{\mathrm{inner/outer}})_{i = 1}^{\expsixthreeKhigherDsixD} & = \expsixthreeKhigherDsixlambdaDIO.
\end{align*}
\end{itemize}
\nolabeltikzfigure{htb}{1}{exp63_KhigherD6_ranks}{Maximal ranks of iterates by residual}
\nolabeltikzfigure{h}{1}{exp63_KhigherD6_ranksres}{Maximal ranks of iterates/residuals by iteration}
\nolabeltikzfigure{h}{1}{exp63_KhigherD6_ranksheatmap}{Maximal ranks of iterates/residuals by iteration}
\FloatBarrier

\end{appendix}

\end{document}

\section{Modifications for Global Convergence}

\comment{Where do we want to put this? Keep at all if not in numerical tests?}
Two issues (at least): (1) in block-sparse format, deletion of blocks can in special cases lead to exact orthogonality to sought eigenspace; (2) and in general, we would like to avoid converging to a saddle point (higher eigenvalue).

\begin{itemize}
  \item  Keeping block sizes at least $1\times 1$ for each block? Can help for (1)
  \item Adding noise in line search: search new iterate from
  \[ 
	 \Span \{ \ten x_n,\, \ten {\tilde r}_n(\eta_n),\, \ten \xi_n \}
	 \]
	 where $\ten \xi_n$ is a random tensor (e.g., rank one in each block). May help with (1) and (2).
\end{itemize}

\section{Eigenvalue Solvers}

Numerical solvers for the eigenvalue problem 
$
\ten H \ten x = \lambda \ten x
$ with the additional constraint $\ten P \ten x = N \ten x$ implemented by keeping $\ten x$ in block-sparse format.

\textbf{\begin{itemize}
  \item  Note on numerical stability of rounding
\end{itemize}}

\subsection{Iterative Methods with Fixed and Variable Ranks}
A standard method for the computation with MPS is the DMRG algorithm. All modern implementations of this method (see, for instance, \cite{itensor,tensornetwork,tenpy,pytenet}) exploit the block sparsity in some form. For the sake of completeness, we give a brief overview of both the one-site and the two-site DMRG. We then turn to methods using global eigenvalue residuals. These methods are nonstandard in physical computations, but may become competitive when block sparsity is taken into account. A detailed numerical comparison of the methods will be subject of further research.

\subsubsection{One-site DMRG/ALS}
The one-site DMRG or ALS algorithm \cite{Holtz:12} optimizes one component of the MPS $\ten x$ at a time. With the appropriate orthogonalization, each subiteration consists of an optimization step on the linear part of the fixed-rank manifold, which coincides with its own tangent space. As such, the one-site DMRG can be formulated as a tangent space prodedure: Let $\ten x_{k,\ell}$ be the current iterate after $\ell$ sweeps and the $k$-th subiteration. That is, we have previously optimized the $k$-th component and orthogononalized accordingly. Now, we optimize the $(k+1)$-st component by minimizing the energy
\[
\ten E_{k,\ell}(\ten x_{k+1,\ell}) = \ten Q_{\ten x_{k,\ell}}^{k+1,1}\ten H \ten Q_{\ten x_{k,\ell}}^{k+1,1}\ten x_{k+1,\ell}.
\] 
If $k = K$, we can go back to $k = 1$ or do the sweep in reverse. We note that $\ten Q_{\ten x_{k,\ell}}^{k+1,1}$ is exactly the projection onto the part of the tangent space at $\ten x_{k,\ell}$ that corresponds to the $(k+1)$-st component. If $\ten x_{k,\ell}$ is an eigenvector of the particle number operator $\ten P$, then by Corollary~\ref{cor:tangprok}, $\ten Q_{\ten x_{k,\ell}}^{k+1,1}$ commutes with $\ten P$. By Lemma~\ref{lemma:hamilpncommute}, so does the Hamiltonian $\ten H$. Thus, the next iterate $\ten x_{k+1,\ell}$ will be in the same eigenspace of $\ten P$. Therefore, if one initializes the one-site DMRG algorithm with a block-sparse MPS of fixed particle number, then the block sparsity will be preserved for each iterate and the algorithm can be performed by operating only on the nonzero blocks.

\subsubsection{Two-site DMRG}
The classical (two-site) DMRG \cite{white,Holtz:12} optimizes two neighboring components at once. This allows for a certain rank-adaptivity in between these components. While this gives the algorithm more flexibility, it also means that the subiterates can leave the fixed-rank manifold and even the tangent space. Nevertheless, we can show that the particle number will be preserved. To this end, we define the operation $\tilde{ \ten Q}_{\ten x}^{k,1}$ for $k=1,\ldots,K-1$ similarly  to $ \ten Q_{\ten x}^{k,1}$ by 
\begin{equation*}
\tilde{\ten Q}_{\ten x}^{k,1} = \biggl(\sum_{j_{k-1}=1}^{r_{k-1}}\rmapless{k}{j_{k-1}}{\rep{U}}\, \langle \rmapless{k}{j_{k-1}}{\rep{U}},\, \cdot\, \rangle \biggr)
\otimes I \otimes I \otimes \biggl(\sum_{j_{k+1}=1}^{r_{k+1}}\rmapgtr{k+1}{j_{k+1}}{\rep{V}}\,\langle\rmapgtr{k+1}{j_{k+1}}{\rep{V}},\,\cdot\,\rangle \biggr).
\end{equation*}

As in Corollary~\ref{cor:tangprok}, it can be shown that $\tilde{\ten Q}_{\ten x}^{k,1}$ and $\ten P$ commute. Thus, with the same argument as above, if the first iterate is an eigenvector of $\ten P$, then all iterates are in the same eigenspace.

\subsubsection{(Preconditioned) Gradient Descent}\label{rayleighpert}
An alternative to the DMRG algorithm are methods operating \emph{globally} on the MPS representation, such as (preconditioned) gradient descent or more involved variants such as LOBPCG \cite{Kressner:11}. For basic gradient descent, one can control the ranks by defining a threshold $\epsilon > 0$ and performing the update scheme
\[
\ten x_{\ell+1} = \operatorname{trunc}_\epsilon\left(\ten x_\ell - \alpha_\ell \left(\ten H\ten x_m - \frac{\langle \ten x_\ell, \ten H \ten x_\ell \rangle}{\langle \ten x_\ell,\ten x_\ell \rangle}\ten x_\ell\right)\right).
\]
Since all involved steps preserve the particle number, this scheme produces a sequence $\ten x_\ell$ with the same particle number if the initial value $\ten x_0$ has a fixed particle number. Convergence can be accelerated by using an optimized step size $\alpha_\ell$ or by preconditioning the system \cite{Rohwedder:11}. 

\subsubsection{Riemannian Gradient Descent}
One could also consider Riemannian methods, where the gradient is projected first onto the tangent space and the step is performed on the fixed-rank manifold \cite{Kressner_SV_2013}. Generalizations are possible that allow for rank adaptivity. This method is often used because the ranks can be fixed and because the projected gradient in the tangent space can be stated explicitly and compactly, thus reducing computational overhead. We construct a sequence $\ten x_\ell$ from an initial value $\ten x_0$ with initial rank $r$. If $\ten x_0$ has a fixed particle number, then so does the entire sequence
\[
\ten x_{\ell+1} = \operatorname{trunc}_r \left(\ten x_\ell - \alpha_\ell \ten Q_{\ten x_\ell}\left(\ten H\ten x_\ell - \frac{\langle \ten x_\ell, \ten H \ten x_\ell\rangle}{\langle \ten x_\ell,\ten x_\ell\rangle}\ten x_\ell\right)\right),
\]
where $\alpha_\ell$ is the step size.  In \cite{steinlechner_riemannian_2016}, it is shown that the truncation to fixed rank is a retraction, and thus the stated scheme can be regarded as a Riemannian optimization method. These methods can be accelerated by typical techniques for gradient descent, such as nonlinear conjugate gradient descent, see \cite{Absil2008}.

%% file: rwth-colors.tex
\definecolor{rwth}   {RGB}{  0  84 159}
\definecolor{rwth-75}{RGB}{ 64 127 183}
\definecolor{rwth-50}{RGB}{142 186 229}
\definecolor{rwth-25}{RGB}{199 221 242}
\definecolor{rwth-10}{RGB}{232 241 250}

\definecolor{black}   {RGB}{  0   0   0}
\definecolor{black-90}{RGB}{44   44  45}
\definecolor{black-80}{RGB}{83   84  86}
\definecolor{black-75}{RGB}{100 101 103}
\definecolor{black-50}{RGB}{156 158 159}
\definecolor{black-25}{RGB}{207 209 210}
\definecolor{black-10}{RGB}{236 237 237}

\definecolor{magenta}   {RGB}{227   0 102}
\definecolor{magenta-75}{RGB}{233  96 136}
\definecolor{magenta-50}{RGB}{241 158 177}
\definecolor{magenta-25}{RGB}{249 210 218}
\definecolor{magenta-10}{RGB}{253 238 240}

\definecolor{yellow}   {RGB}{255 237   0}
\definecolor{yellow-75}{RGB}{255 240  85}
\definecolor{yellow-50}{RGB}{255 245 155}
\definecolor{yellow-25}{RGB}{255 250 209}
\definecolor{yellow-10}{RGB}{255 253 238}

\definecolor{petrol}   {RGB}{  0  97 101}
\definecolor{petrol-75}{RGB}{ 45 127 131}
\definecolor{petrol-50}{RGB}{125 164 167}
\definecolor{petrol-25}{RGB}{191 208 209}
\definecolor{petrol-10}{RGB}{230 236 236}

\definecolor{turkis}   {RGB}{  0 152 161}
\definecolor{turkis-75}{RGB}{  0 177 183}
\definecolor{turkis-50}{RGB}{137 204 207}
\definecolor{turkis-25}{RGB}{202 231 231}
\definecolor{turkis-10}{RGB}{235 246 246}

\definecolor{grun}   {RGB}{ 87 171  39}
\definecolor{grun-75}{RGB}{141 192  96}
\definecolor{grun-50}{RGB}{184 214 152}
\definecolor{grun-25}{RGB}{221 235 206}
\definecolor{grun-10}{RGB}{242 247 236}

\definecolor{maigrun}   {RGB}{189 205   0}
\definecolor{maigrun-75}{RGB}{208 217  92}
\definecolor{maigrun-50}{RGB}{224 230 154}
\definecolor{maigrun-25}{RGB}{240 243 208}
\definecolor{maigrun-10}{RGB}{249 250 237}

\definecolor{orange}   {RGB}{246 168   0}
\definecolor{orange-75}{RGB}{250 190  80}
\definecolor{orange-50}{RGB}{253 212 143}
\definecolor{orange-25}{RGB}{254 234 201}
\definecolor{orange-10}{RGB}{255 247 234}

\definecolor{rot}   {RGB}{204   7  30}
\definecolor{rot-75}{RGB}{216  92  65}
\definecolor{rot-50}{RGB}{230 150 121}
\definecolor{rot-25}{RGB}{243 205 187}
\definecolor{rot-10}{RGB}{250 235 227}

\definecolor{bordeaux}   {RGB}{161  16  53}
\definecolor{bordeaux-75}{RGB}{182  82  86}
\definecolor{bordeaux-50}{RGB}{205 139 135}
\definecolor{bordeaux-25}{RGB}{229 197 192}
\definecolor{bordeaux-10}{RGB}{245 232 229}

\definecolor{violett}   {RGB}{ 97  33  88}
\definecolor{violett-75}{RGB}{131  78 117}
\definecolor{violett-50}{RGB}{168 133 158}
\definecolor{violett-25}{RGB}{210 192 205}
\definecolor{violett-10}{RGB}{237 229 234}

\definecolor{lila}   {RGB}{122 111 172}
\definecolor{lila-75}{RGB}{155 145 193}
\definecolor{lila-50}{RGB}{188 181 215}
\definecolor{lila-25}{RGB}{222 218 235}
\definecolor{lila-10}{RGB}{242 240 247}

%% file: numbers/exp11_KlowD1_data.txt
\newcommand{\exponeoneKlowDoneK}{\ensuremath{14}}
\newcommand{\exponeoneKlowDonemmax}{\ensuremath{18}}
\newcommand{\exponeoneKlowDonemmin}{\ensuremath{5}}
\newcommand{\exponeoneKlowDoneb}{\ensuremath{4}}
\newcommand{\exponeoneKlowDoneN}{\ensuremath{4}}
\newcommand{\exponeoneKlowDonegamma}{\ensuremath{4}}
\newcommand{\exponeoneKlowDoneD}{\ensuremath{1}}

\newcommand{\exponeoneKlowDonecfV}{\ensuremath{4}}

\newcommand{\exponeoneKlowDoneB}{\ensuremath{3594}}
\newcommand{\exponeoneKlowDonetone}{\ensuremath{-8.0}}
\newcommand{\exponeoneKlowDonettwo}{\ensuremath{0.31}}
\newcommand{\exponeoneKlowDonetKmone}{\ensuremath{12.18}}
\newcommand{\exponeoneKlowDonetK}{\ensuremath{15.11}}
\newcommand{\exponeoneKlowDonetmin}{\ensuremath{1.38}}
\newcommand{\exponeoneKlowDonetmax}{\ensuremath{54.5}}
\newcommand{\exponeoneKlowDoneT}{\ensuremath{39.61}}
\newcommand{\exponeoneKlowDonekappa}{\ensuremath{10.9096}}

\newcommand{\exponeoneKlowDonelambdaonetrue}{\ensuremath{0.93676803545383}}
\newcommand{\exponeoneKlowDonelambdaoneIO}{\ensuremath{0.93676803545431}}
\newcommand{\exponeoneKlowDonelambdaoneIonly}{\ensuremath{0.93676803545698}}
\newcommand{\exponeoneKlowDoneepsiter}{\ensuremath{0.57}}
\newcommand{\exponeoneKlowDoneepsnum}{\ensuremath{0.21}}

\newcommand{\exponeoneKlowDonelengthS}{\ensuremath{7}}

%% file: numbers/exp16_KevenhigherD1_data.txt
\newcommand{\exponesixKevenhigherDoneK}{\ensuremath{30}}
\newcommand{\exponesixKevenhigherDonemmax}{\ensuremath{18}}
\newcommand{\exponesixKevenhigherDonemmin}{\ensuremath{5}}

\newcommand{\exponesixKevenhigherDoneN}{\ensuremath{4}}
\newcommand{\exponesixKevenhigherDonegamma}{\ensuremath{4}}
\newcommand{\exponesixKevenhigherDoneD}{\ensuremath{1}}

\newcommand{\exponesixKevenhigherDonereltol}{\ensuremath{10^{-10}}}

\newcommand{\exponesixKevenhigherDonetone}{\ensuremath{-8.0}}
\newcommand{\exponesixKevenhigherDonettwo}{\ensuremath{0.31}}
\newcommand{\exponesixKevenhigherDonetKmone}{\ensuremath{63.88}}
\newcommand{\exponesixKevenhigherDonetK}{\ensuremath{69.4}}
\newcommand{\exponesixKevenhigherDonetmin}{\ensuremath{1.38}}
\newcommand{\exponesixKevenhigherDonetmax}{\ensuremath{256.64}}
\newcommand{\exponesixKevenhigherDoneT}{\ensuremath{186.52}}
\newcommand{\exponesixKevenhigherDonekappa}{\ensuremath{10.9096}}

\newcommand{\exponesixKevenhigherDonelambdaoneIO}{\ensuremath{0.93660387186093}}
\newcommand{\exponesixKevenhigherDonelambdaoneIonly}{\ensuremath{0.93660387186342}}
\newcommand{\exponesixKevenhigherDoneepsiter}{\ensuremath{0.57}}
\newcommand{\exponesixKevenhigherDoneepsnum}{\ensuremath{0.21}}

\newcommand{\exponesixKevenhigherDonelengthS}{\ensuremath{8}}

%% file: numbers/exp41_KlowD4_data.txt
\newcommand{\expfouroneKlowDfourK}{\ensuremath{14}}
\newcommand{\expfouroneKlowDfourmmax}{\ensuremath{18}}

\newcommand{\expfouroneKlowDfourN}{\ensuremath{4}}
\newcommand{\expfouroneKlowDfourgamma}{\ensuremath{4}}
\newcommand{\expfouroneKlowDfourD}{\ensuremath{4}}

\newcommand{\expfouroneKlowDfourtone}{\ensuremath{-8.0}}
\newcommand{\expfouroneKlowDfourttwo}{\ensuremath{0.31}}
\newcommand{\expfouroneKlowDfourtKmone}{\ensuremath{12.18}}
\newcommand{\expfouroneKlowDfourtK}{\ensuremath{15.11}}
\newcommand{\expfouroneKlowDfourtmin}{\ensuremath{1.38}}
\newcommand{\expfouroneKlowDfourtmax}{\ensuremath{54.5}}
\newcommand{\expfouroneKlowDfourT}{\ensuremath{39.61}}
\newcommand{\expfouroneKlowDfourkappa}{\ensuremath{10.9096}}

\newcommand{\expfouroneKlowDfourlambdaDtrue}{\ensuremath{(0.93676803545, 1.03129973333, 2.34915988893, 2.36420024713)}}

\newcommand{\expfouroneKlowDfourlambdaDIO}{\ensuremath{(0.9367680356, 1.03129973352, 2.34915990979, 2.36420024747)}}
\newcommand{\expfouroneKlowDfourepsiter}{\ensuremath{0.57}}
\newcommand{\expfouroneKlowDfourepsnum}{\ensuremath{0.21}}

\newcommand{\expfouroneKlowDfourlengthS}{\ensuremath{7}}

%% file: numbers/exp46_KevenhigherD4_data.txt
\newcommand{\expfoursixKevenhigherDfourK}{\ensuremath{30}}
\newcommand{\expfoursixKevenhigherDfourmmax}{\ensuremath{18}}

\newcommand{\expfoursixKevenhigherDfourN}{\ensuremath{4}}
\newcommand{\expfoursixKevenhigherDfourgamma}{\ensuremath{4}}
\newcommand{\expfoursixKevenhigherDfourD}{\ensuremath{4}}

\newcommand{\expfoursixKevenhigherDfourreltol}{\ensuremath{10^{-7}}}

\newcommand{\expfoursixKevenhigherDfourtone}{\ensuremath{-8.0}}
\newcommand{\expfoursixKevenhigherDfourttwo}{\ensuremath{0.31}}
\newcommand{\expfoursixKevenhigherDfourtKmone}{\ensuremath{63.88}}
\newcommand{\expfoursixKevenhigherDfourtK}{\ensuremath{69.4}}
\newcommand{\expfoursixKevenhigherDfourtmin}{\ensuremath{1.38}}
\newcommand{\expfoursixKevenhigherDfourtmax}{\ensuremath{256.64}}
\newcommand{\expfoursixKevenhigherDfourT}{\ensuremath{186.52}}
\newcommand{\expfoursixKevenhigherDfourkappa}{\ensuremath{10.9096}}

\newcommand{\expfoursixKevenhigherDfourlambdaDIO}{\ensuremath{(0.93660387187, 1.03109809184, 2.34897980674, 2.3640232121)}}
\newcommand{\expfoursixKevenhigherDfourepsiter}{\ensuremath{0.57}}
\newcommand{\expfoursixKevenhigherDfourepsnum}{\ensuremath{0.21}}

\newcommand{\expfoursixKevenhigherDfourlengthS}{\ensuremath{8}}

%% file: numbers/exp12_KhighD1_data.txt
\newcommand{\exponetwoKhighDoneK}{\ensuremath{24}}
\newcommand{\exponetwoKhighDonemmax}{\ensuremath{18}}
\newcommand{\exponetwoKhighDonemmin}{\ensuremath{5}}
\newcommand{\exponetwoKhighDoneb}{\ensuremath{4}}
\newcommand{\exponetwoKhighDoneN}{\ensuremath{4}}
\newcommand{\exponetwoKhighDonegamma}{\ensuremath{4}}
\newcommand{\exponetwoKhighDoneD}{\ensuremath{1}}
\newcommand{\exponetwoKhighDonecfT}{\ensuremath{4}}
\newcommand{\exponetwoKhighDonecfV}{\ensuremath{4}}
\newcommand{\exponetwoKhighDonereltol}{\ensuremath{10^{-10}}}
\newcommand{\exponetwoKhighDoneTVtol}{\ensuremath{0}}
\newcommand{\exponetwoKhighDoneB}{\ensuremath{3594}}
\newcommand{\exponetwoKhighDonetone}{\ensuremath{-8.0}}
\newcommand{\exponetwoKhighDonettwo}{\ensuremath{0.31}}
\newcommand{\exponetwoKhighDonetKmone}{\ensuremath{39.84}}
\newcommand{\exponetwoKhighDonetK}{\ensuremath{44.41}}
\newcommand{\exponetwoKhighDonetmin}{\ensuremath{1.38}}
\newcommand{\exponetwoKhighDonetmax}{\ensuremath{162.29}}
\newcommand{\exponetwoKhighDoneT}{\ensuremath{117.95}}
\newcommand{\exponetwoKhighDonekappa}{\ensuremath{10.9096}}
\newcommand{\exponetwoKhighDonecalg}{\ensuremath{0.57}}
\newcommand{\exponetwoKhighDoneupper}{\ensuremath{1.57}}
\newcommand{\exponetwoKhighDonelower}{\ensuremath{0.43}}
\newcommand{\exponetwoKhighDonelambdaoneIO}{\ensuremath{0.93661026524621}}
\newcommand{\exponetwoKhighDonelambdaoneIonly}{\ensuremath{0.9366102652482}}
\newcommand{\exponetwoKhighDoneepsiter}{\ensuremath{0.57}}
\newcommand{\exponetwoKhighDoneepsnum}{\ensuremath{0.21}}
\newcommand{\exponetwoKhighDonesq}{\ensuremath{0.5}}
\newcommand{\exponetwoKhighDonelengthS}{\ensuremath{8}}

%% file: numbers/exp121_KhighD1mminus_data.txt
\newcommand{\exponetwooneKhighDonemminusK}{\ensuremath{24}}
\newcommand{\exponetwooneKhighDonemminusmmax}{\ensuremath{17}}
\newcommand{\exponetwooneKhighDonemminusmmin}{\ensuremath{5}}
\newcommand{\exponetwooneKhighDonemminusb}{\ensuremath{4}}
\newcommand{\exponetwooneKhighDonemminusN}{\ensuremath{4}}
\newcommand{\exponetwooneKhighDonemminusgamma}{\ensuremath{4}}
\newcommand{\exponetwooneKhighDonemminusD}{\ensuremath{1}}
\newcommand{\exponetwooneKhighDonemminuscfT}{\ensuremath{4}}
\newcommand{\exponetwooneKhighDonemminuscfV}{\ensuremath{4}}
\newcommand{\exponetwooneKhighDonemminusreltol}{\ensuremath{10^{-10}}}
\newcommand{\exponetwooneKhighDonemminusTVtol}{\ensuremath{0}}
\newcommand{\exponetwooneKhighDonemminusB}{\ensuremath{3352}}
\newcommand{\exponetwooneKhighDonemminustone}{\ensuremath{-8.0}}
\newcommand{\exponetwooneKhighDonemminusttwo}{\ensuremath{0.31}}
\newcommand{\exponetwooneKhighDonemminustKmone}{\ensuremath{39.84}}
\newcommand{\exponetwooneKhighDonemminustK}{\ensuremath{44.41}}
\newcommand{\exponetwooneKhighDonemminustmin}{\ensuremath{1.38}}
\newcommand{\exponetwooneKhighDonemminustmax}{\ensuremath{162.29}}
\newcommand{\exponetwooneKhighDonemminusT}{\ensuremath{117.95}}
\newcommand{\exponetwooneKhighDonemminuskappa}{\ensuremath{10.9098}}
\newcommand{\exponetwooneKhighDonemminuscalg}{\ensuremath{0.57}}
\newcommand{\exponetwooneKhighDonemminusupper}{\ensuremath{1.57}}
\newcommand{\exponetwooneKhighDonemminuslower}{\ensuremath{0.43}}
\newcommand{\exponetwooneKhighDonemminuslambdaoneIO}{\ensuremath{0.93663533715566}}
\newcommand{\exponetwooneKhighDonemminuslambdaoneIonly}{\ensuremath{0.93663533715766}}
\newcommand{\exponetwooneKhighDonemminusepsiter}{\ensuremath{0.57}}
\newcommand{\exponetwooneKhighDonemminusepsnum}{\ensuremath{0.21}}
\newcommand{\exponetwooneKhighDonemminussq}{\ensuremath{0.5}}
\newcommand{\exponetwooneKhighDonemminuslengthS}{\ensuremath{8}}

%% file: numbers/exp13_KhigherD1_data.txt
\newcommand{\exponethreeKhigherDoneK}{\ensuremath{26}}
\newcommand{\exponethreeKhigherDonemmax}{\ensuremath{18}}
\newcommand{\exponethreeKhigherDonemmin}{\ensuremath{5}}
\newcommand{\exponethreeKhigherDoneb}{\ensuremath{4}}
\newcommand{\exponethreeKhigherDoneN}{\ensuremath{4}}
\newcommand{\exponethreeKhigherDonegamma}{\ensuremath{4}}
\newcommand{\exponethreeKhigherDoneD}{\ensuremath{1}}
\newcommand{\exponethreeKhigherDonecfT}{\ensuremath{4}}
\newcommand{\exponethreeKhigherDonecfV}{\ensuremath{4}}
\newcommand{\exponethreeKhigherDonereltol}{\ensuremath{10^{-10}}}
\newcommand{\exponethreeKhigherDoneTVtol}{\ensuremath{0}}
\newcommand{\exponethreeKhigherDoneB}{\ensuremath{3594}}
\newcommand{\exponethreeKhigherDonetone}{\ensuremath{-8.0}}
\newcommand{\exponethreeKhigherDonettwo}{\ensuremath{0.31}}
\newcommand{\exponethreeKhigherDonetKmone}{\ensuremath{47.24}}
\newcommand{\exponethreeKhigherDonetK}{\ensuremath{52.12}}
\newcommand{\exponethreeKhigherDonetmin}{\ensuremath{1.38}}
\newcommand{\exponethreeKhigherDonetmax}{\ensuremath{191.27}}
\newcommand{\exponethreeKhigherDoneT}{\ensuremath{139.01}}
\newcommand{\exponethreeKhigherDonekappa}{\ensuremath{10.9096}}
\newcommand{\exponethreeKhigherDonecalg}{\ensuremath{0.57}}
\newcommand{\exponethreeKhigherDoneupper}{\ensuremath{1.57}}
\newcommand{\exponethreeKhigherDonelower}{\ensuremath{0.43}}
\newcommand{\exponethreeKhigherDonelambdaoneIO}{\ensuremath{0.93660704620291}}
\newcommand{\exponethreeKhigherDonelambdaoneIonly}{\ensuremath{0.9366070462054}}
\newcommand{\exponethreeKhigherDoneepsiter}{\ensuremath{0.57}}
\newcommand{\exponethreeKhigherDoneepsnum}{\ensuremath{0.21}}
\newcommand{\exponethreeKhigherDonesq}{\ensuremath{0.5}}
\newcommand{\exponethreeKhigherDonelengthS}{\ensuremath{8}}

%% file: numbers/exp14_KlowD1N6_data.txt
\newcommand{\exponefourKlowDoneNsixK}{\ensuremath{14}}
\newcommand{\exponefourKlowDoneNsixmmax}{\ensuremath{18}}
\newcommand{\exponefourKlowDoneNsixmmin}{\ensuremath{5}}
\newcommand{\exponefourKlowDoneNsixb}{\ensuremath{4}}
\newcommand{\exponefourKlowDoneNsixN}{\ensuremath{6}}
\newcommand{\exponefourKlowDoneNsixgamma}{\ensuremath{0}}
\newcommand{\exponefourKlowDoneNsixD}{\ensuremath{1}}
\newcommand{\exponefourKlowDoneNsixcfT}{\ensuremath{4}}
\newcommand{\exponefourKlowDoneNsixcfV}{\ensuremath{4}}
\newcommand{\exponefourKlowDoneNsixreltol}{\ensuremath{10^{-10}}}
\newcommand{\exponefourKlowDoneNsixTVtol}{\ensuremath{0}}
\newcommand{\exponefourKlowDoneNsixB}{\ensuremath{3594}}
\newcommand{\exponefourKlowDoneNsixtone}{\ensuremath{-8.0}}
\newcommand{\exponefourKlowDoneNsixttwo}{\ensuremath{0.31}}
\newcommand{\exponefourKlowDoneNsixtKmone}{\ensuremath{12.18}}
\newcommand{\exponefourKlowDoneNsixtK}{\ensuremath{15.11}}
\newcommand{\exponefourKlowDoneNsixtmin}{\ensuremath{7.35}}
\newcommand{\exponefourKlowDoneNsixtmax}{\ensuremath{70.01}}
\newcommand{\exponefourKlowDoneNsixT}{\ensuremath{9.53}}
\newcommand{\exponefourKlowDoneNsixkappa}{\ensuremath{24.1883}}
\newcommand{\exponefourKlowDoneNsixcalg}{\ensuremath{0.29}}
\newcommand{\exponefourKlowDoneNsixupper}{\ensuremath{1.29}}
\newcommand{\exponefourKlowDoneNsixlower}{\ensuremath{0.71}}
\newcommand{\exponefourKlowDoneNsixlambdaonetrue}{\ensuremath{6.77932848602532}}
\newcommand{\exponefourKlowDoneNsixlambdaoneIO}{\ensuremath{6.77932848603416}}
\newcommand{\exponefourKlowDoneNsixlambdaoneIonly}{\ensuremath{6.77932848606342}}
\newcommand{\exponefourKlowDoneNsixepsiter}{\ensuremath{0.29}}
\newcommand{\exponefourKlowDoneNsixepsnum}{\ensuremath{0.36}}
\newcommand{\exponefourKlowDoneNsixsq}{\ensuremath{0.5}}
\newcommand{\exponefourKlowDoneNsixlengthS}{\ensuremath{7}}

%% file: numbers/exp21_KlowD2_data.txt
\newcommand{\exptwooneKlowDtwoK}{\ensuremath{14}}
\newcommand{\exptwooneKlowDtwommax}{\ensuremath{18}}
\newcommand{\exptwooneKlowDtwommin}{\ensuremath{5}}
\newcommand{\exptwooneKlowDtwob}{\ensuremath{4}}
\newcommand{\exptwooneKlowDtwoN}{\ensuremath{4}}
\newcommand{\exptwooneKlowDtwogamma}{\ensuremath{4}}
\newcommand{\exptwooneKlowDtwoD}{\ensuremath{2}}
\newcommand{\exptwooneKlowDtwocfT}{\ensuremath{4}}
\newcommand{\exptwooneKlowDtwocfV}{\ensuremath{4}}
\newcommand{\exptwooneKlowDtworeltol}{\ensuremath{10^{-7}}}
\newcommand{\exptwooneKlowDtwoTVtol}{\ensuremath{0}}
\newcommand{\exptwooneKlowDtwoB}{\ensuremath{3594}}
\newcommand{\exptwooneKlowDtwotone}{\ensuremath{-8.0}}
\newcommand{\exptwooneKlowDtwottwo}{\ensuremath{0.31}}
\newcommand{\exptwooneKlowDtwotKmone}{\ensuremath{12.18}}
\newcommand{\exptwooneKlowDtwotK}{\ensuremath{15.11}}
\newcommand{\exptwooneKlowDtwotmin}{\ensuremath{1.38}}
\newcommand{\exptwooneKlowDtwotmax}{\ensuremath{54.5}}
\newcommand{\exptwooneKlowDtwoT}{\ensuremath{39.61}}
\newcommand{\exptwooneKlowDtwokappa}{\ensuremath{10.9096}}
\newcommand{\exptwooneKlowDtwocalg}{\ensuremath{0.57}}
\newcommand{\exptwooneKlowDtwoupper}{\ensuremath{1.57}}
\newcommand{\exptwooneKlowDtwolower}{\ensuremath{0.43}}
\newcommand{\exptwooneKlowDtwolambdaonetrue}{\ensuremath{0.93676803545}}
\newcommand{\exptwooneKlowDtwolambdaDtrue}{\ensuremath{(0.93676803545, 1.03129973333)}}
\newcommand{\exptwooneKlowDtwolambdaoneIO}{\ensuremath{0.93676803569}}
\newcommand{\exptwooneKlowDtwolambdaDIO}{\ensuremath{(0.93676803569, 1.03129973404)}}
\newcommand{\exptwooneKlowDtwoepsiter}{\ensuremath{0.57}}
\newcommand{\exptwooneKlowDtwoepsnum}{\ensuremath{0.21}}
\newcommand{\exptwooneKlowDtwosq}{\ensuremath{0.5}}
\newcommand{\exptwooneKlowDtwolengthS}{\ensuremath{7}}

%% file: numbers/exp22_KhighD2_data.txt
\newcommand{\exptwotwoKhighDtwoK}{\ensuremath{24}}
\newcommand{\exptwotwoKhighDtwommax}{\ensuremath{18}}
\newcommand{\exptwotwoKhighDtwommin}{\ensuremath{5}}
\newcommand{\exptwotwoKhighDtwob}{\ensuremath{4}}
\newcommand{\exptwotwoKhighDtwoN}{\ensuremath{4}}
\newcommand{\exptwotwoKhighDtwogamma}{\ensuremath{4}}
\newcommand{\exptwotwoKhighDtwoD}{\ensuremath{2}}
\newcommand{\exptwotwoKhighDtwocfT}{\ensuremath{4}}
\newcommand{\exptwotwoKhighDtwocfV}{\ensuremath{4}}
\newcommand{\exptwotwoKhighDtworeltol}{\ensuremath{10^{-7}}}
\newcommand{\exptwotwoKhighDtwoTVtol}{\ensuremath{0}}
\newcommand{\exptwotwoKhighDtwoB}{\ensuremath{3594}}
\newcommand{\exptwotwoKhighDtwotone}{\ensuremath{-8.0}}
\newcommand{\exptwotwoKhighDtwottwo}{\ensuremath{0.31}}
\newcommand{\exptwotwoKhighDtwotKmone}{\ensuremath{39.84}}
\newcommand{\exptwotwoKhighDtwotK}{\ensuremath{44.41}}
\newcommand{\exptwotwoKhighDtwotmin}{\ensuremath{1.38}}
\newcommand{\exptwotwoKhighDtwotmax}{\ensuremath{162.29}}
\newcommand{\exptwotwoKhighDtwoT}{\ensuremath{117.95}}
\newcommand{\exptwotwoKhighDtwokappa}{\ensuremath{10.9096}}
\newcommand{\exptwotwoKhighDtwocalg}{\ensuremath{0.57}}
\newcommand{\exptwotwoKhighDtwoupper}{\ensuremath{1.57}}
\newcommand{\exptwotwoKhighDtwolower}{\ensuremath{0.43}}
\newcommand{\exptwotwoKhighDtwolambdaoneIO}{\ensuremath{0.9366102686}}
\newcommand{\exptwotwoKhighDtwolambdaDIO}{\ensuremath{(0.9366102686, 1.0311035381)}}
\newcommand{\exptwotwoKhighDtwoepsiter}{\ensuremath{0.57}}
\newcommand{\exptwotwoKhighDtwoepsnum}{\ensuremath{0.21}}
\newcommand{\exptwotwoKhighDtwosq}{\ensuremath{0.5}}
\newcommand{\exptwotwoKhighDtwolengthS}{\ensuremath{8}}

%% file: numbers/exp42_KhighD4_data.txt
\newcommand{\expfourtwoKhighDfourK}{\ensuremath{24}}
\newcommand{\expfourtwoKhighDfourmmax}{\ensuremath{18}}
\newcommand{\expfourtwoKhighDfourmmin}{\ensuremath{5}}
\newcommand{\expfourtwoKhighDfourb}{\ensuremath{4}}
\newcommand{\expfourtwoKhighDfourN}{\ensuremath{4}}
\newcommand{\expfourtwoKhighDfourgamma}{\ensuremath{4}}
\newcommand{\expfourtwoKhighDfourD}{\ensuremath{4}}
\newcommand{\expfourtwoKhighDfourcfT}{\ensuremath{4}}
\newcommand{\expfourtwoKhighDfourcfV}{\ensuremath{4}}
\newcommand{\expfourtwoKhighDfourreltol}{\ensuremath{10^{-7}}}
\newcommand{\expfourtwoKhighDfourTVtol}{\ensuremath{0}}
\newcommand{\expfourtwoKhighDfourB}{\ensuremath{3594}}
\newcommand{\expfourtwoKhighDfourtone}{\ensuremath{-8.0}}
\newcommand{\expfourtwoKhighDfourttwo}{\ensuremath{0.31}}
\newcommand{\expfourtwoKhighDfourtKmone}{\ensuremath{39.84}}
\newcommand{\expfourtwoKhighDfourtK}{\ensuremath{44.41}}
\newcommand{\expfourtwoKhighDfourtmin}{\ensuremath{1.38}}
\newcommand{\expfourtwoKhighDfourtmax}{\ensuremath{162.29}}
\newcommand{\expfourtwoKhighDfourT}{\ensuremath{117.95}}
\newcommand{\expfourtwoKhighDfourkappa}{\ensuremath{10.9096}}
\newcommand{\expfourtwoKhighDfourcalg}{\ensuremath{0.57}}
\newcommand{\expfourtwoKhighDfourupper}{\ensuremath{1.57}}
\newcommand{\expfourtwoKhighDfourlower}{\ensuremath{0.43}}
\newcommand{\expfourtwoKhighDfourlambdaoneIO}{\ensuremath{0.93661026532}}
\newcommand{\expfourtwoKhighDfourlambdaDIO}{\ensuremath{(0.93661026532, 1.03110353551, 2.34898509027, 2.36402837386)}}
\newcommand{\expfourtwoKhighDfourepsiter}{\ensuremath{0.57}}
\newcommand{\expfourtwoKhighDfourepsnum}{\ensuremath{0.21}}
\newcommand{\expfourtwoKhighDfoursq}{\ensuremath{0.5}}
\newcommand{\expfourtwoKhighDfourlengthS}{\ensuremath{8}}

%% file: numbers/exp43_KhigherD4_data.txt
\newcommand{\expfourthreeKhigherDfourK}{\ensuremath{26}}
\newcommand{\expfourthreeKhigherDfourmmax}{\ensuremath{18}}
\newcommand{\expfourthreeKhigherDfourmmin}{\ensuremath{5}}
\newcommand{\expfourthreeKhigherDfourb}{\ensuremath{4}}
\newcommand{\expfourthreeKhigherDfourN}{\ensuremath{4}}
\newcommand{\expfourthreeKhigherDfourgamma}{\ensuremath{4}}
\newcommand{\expfourthreeKhigherDfourD}{\ensuremath{4}}
\newcommand{\expfourthreeKhigherDfourcfT}{\ensuremath{4}}
\newcommand{\expfourthreeKhigherDfourcfV}{\ensuremath{4}}
\newcommand{\expfourthreeKhigherDfourreltol}{\ensuremath{10^{-7}}}
\newcommand{\expfourthreeKhigherDfourTVtol}{\ensuremath{0}}
\newcommand{\expfourthreeKhigherDfourB}{\ensuremath{3594}}
\newcommand{\expfourthreeKhigherDfourtone}{\ensuremath{-8.0}}
\newcommand{\expfourthreeKhigherDfourttwo}{\ensuremath{0.31}}
\newcommand{\expfourthreeKhigherDfourtKmone}{\ensuremath{47.24}}
\newcommand{\expfourthreeKhigherDfourtK}{\ensuremath{52.12}}
\newcommand{\expfourthreeKhigherDfourtmin}{\ensuremath{1.38}}
\newcommand{\expfourthreeKhigherDfourtmax}{\ensuremath{191.27}}
\newcommand{\expfourthreeKhigherDfourT}{\ensuremath{139.01}}
\newcommand{\expfourthreeKhigherDfourkappa}{\ensuremath{10.9096}}
\newcommand{\expfourthreeKhigherDfourcalg}{\ensuremath{0.57}}
\newcommand{\expfourthreeKhigherDfourupper}{\ensuremath{1.57}}
\newcommand{\expfourthreeKhigherDfourlower}{\ensuremath{0.43}}
\newcommand{\expfourthreeKhigherDfourlambdaoneIO}{\ensuremath{0.93660704628}}
\newcommand{\expfourthreeKhigherDfourlambdaDIO}{\ensuremath{(0.93660704628, 1.03110079261, 2.34898242886, 2.36402576124)}}
\newcommand{\expfourthreeKhigherDfourepsiter}{\ensuremath{0.57}}
\newcommand{\expfourthreeKhigherDfourepsnum}{\ensuremath{0.21}}
\newcommand{\expfourthreeKhigherDfoursq}{\ensuremath{0.5}}
\newcommand{\expfourthreeKhigherDfourlengthS}{\ensuremath{8}}

%% file: numbers/exp44_KlowD4N6_data.txt
\newcommand{\expfourfourKlowDfourNsixK}{\ensuremath{14}}
\newcommand{\expfourfourKlowDfourNsixmmax}{\ensuremath{18}}
\newcommand{\expfourfourKlowDfourNsixmmin}{\ensuremath{5}}
\newcommand{\expfourfourKlowDfourNsixb}{\ensuremath{4}}
\newcommand{\expfourfourKlowDfourNsixN}{\ensuremath{6}}
\newcommand{\expfourfourKlowDfourNsixgamma}{\ensuremath{0}}
\newcommand{\expfourfourKlowDfourNsixD}{\ensuremath{4}}
\newcommand{\expfourfourKlowDfourNsixcfT}{\ensuremath{4}}
\newcommand{\expfourfourKlowDfourNsixcfV}{\ensuremath{4}}
\newcommand{\expfourfourKlowDfourNsixreltol}{\ensuremath{10^{-7}}}
\newcommand{\expfourfourKlowDfourNsixTVtol}{\ensuremath{0}}
\newcommand{\expfourfourKlowDfourNsixB}{\ensuremath{3594}}
\newcommand{\expfourfourKlowDfourNsixtone}{\ensuremath{-8.0}}
\newcommand{\expfourfourKlowDfourNsixttwo}{\ensuremath{0.31}}
\newcommand{\expfourfourKlowDfourNsixtKmone}{\ensuremath{12.18}}
\newcommand{\expfourfourKlowDfourNsixtK}{\ensuremath{15.11}}
\newcommand{\expfourfourKlowDfourNsixtmin}{\ensuremath{7.35}}
\newcommand{\expfourfourKlowDfourNsixtmax}{\ensuremath{70.01}}
\newcommand{\expfourfourKlowDfourNsixT}{\ensuremath{9.53}}
\newcommand{\expfourfourKlowDfourNsixkappa}{\ensuremath{24.1883}}
\newcommand{\expfourfourKlowDfourNsixcalg}{\ensuremath{0.29}}
\newcommand{\expfourfourKlowDfourNsixupper}{\ensuremath{1.29}}
\newcommand{\expfourfourKlowDfourNsixlower}{\ensuremath{0.71}}
\newcommand{\expfourfourKlowDfourNsixlambdaonetrue}{\ensuremath{6.77932848603}}
\newcommand{\expfourfourKlowDfourNsixlambdaDtrue}{\ensuremath{(6.77932848603, 7.07168841064, 8.83204278272, 8.95810708913)}}
\newcommand{\expfourfourKlowDfourNsixlambdaoneIO}{\ensuremath{6.77932848603}}
\newcommand{\expfourfourKlowDfourNsixlambdaDIO}{\ensuremath{(6.77932848603, 7.07168841064, 8.83204300284, 8.95810708913)}}
\newcommand{\expfourfourKlowDfourNsixepsiter}{\ensuremath{0.29}}
\newcommand{\expfourfourKlowDfourNsixepsnum}{\ensuremath{0.36}}
\newcommand{\expfourfourKlowDfourNsixsq}{\ensuremath{0.5}}
\newcommand{\expfourfourKlowDfourNsixlengthS}{\ensuremath{7}}

%% file: numbers/exp61_KlowD6_data.txt
\newcommand{\expsixoneKlowDsixK}{\ensuremath{14}}
\newcommand{\expsixoneKlowDsixmmax}{\ensuremath{18}}
\newcommand{\expsixoneKlowDsixmmin}{\ensuremath{5}}
\newcommand{\expsixoneKlowDsixb}{\ensuremath{4}}
\newcommand{\expsixoneKlowDsixN}{\ensuremath{4}}
\newcommand{\expsixoneKlowDsixgamma}{\ensuremath{4}}
\newcommand{\expsixoneKlowDsixD}{\ensuremath{6}}
\newcommand{\expsixoneKlowDsixcfT}{\ensuremath{4}}
\newcommand{\expsixoneKlowDsixcfV}{\ensuremath{4}}
\newcommand{\expsixoneKlowDsixreltol}{\ensuremath{10^{-7}}}
\newcommand{\expsixoneKlowDsixTVtol}{\ensuremath{0}}
\newcommand{\expsixoneKlowDsixB}{\ensuremath{3594}}
\newcommand{\expsixoneKlowDsixtone}{\ensuremath{-8.0}}
\newcommand{\expsixoneKlowDsixttwo}{\ensuremath{0.31}}
\newcommand{\expsixoneKlowDsixtKmone}{\ensuremath{12.18}}
\newcommand{\expsixoneKlowDsixtK}{\ensuremath{15.11}}
\newcommand{\expsixoneKlowDsixtmin}{\ensuremath{1.38}}
\newcommand{\expsixoneKlowDsixtmax}{\ensuremath{54.5}}
\newcommand{\expsixoneKlowDsixT}{\ensuremath{39.61}}
\newcommand{\expsixoneKlowDsixkappa}{\ensuremath{10.9096}}
\newcommand{\expsixoneKlowDsixcalg}{\ensuremath{0.57}}
\newcommand{\expsixoneKlowDsixupper}{\ensuremath{1.57}}
\newcommand{\expsixoneKlowDsixlower}{\ensuremath{0.43}}
\newcommand{\expsixoneKlowDsixlambdaonetrue}{\ensuremath{0.93676803545}}
\newcommand{\expsixoneKlowDsixlambdaDtrue}{\ensuremath{(0.93676803545, 1.03129973333, 2.34915988893, 2.36420024713, 2.76449828512, 3.05384074982)}}
\newcommand{\expsixoneKlowDsixlambdaoneIO}{\ensuremath{0.93676803547}}
\newcommand{\expsixoneKlowDsixlambdaDIO}{\ensuremath{(0.93676803547, 1.03129973335, 2.34915988897, 2.36420024721, 2.76449828527, 3.05384075103)}}
\newcommand{\expsixoneKlowDsixepsiter}{\ensuremath{0.57}}
\newcommand{\expsixoneKlowDsixepsnum}{\ensuremath{0.21}}
\newcommand{\expsixoneKlowDsixsq}{\ensuremath{0.5}}
\newcommand{\expsixoneKlowDsixlengthS}{\ensuremath{7}}

%% file: numbers/exp62_KhighD6_data.txt
\newcommand{\expsixtwoKhighDsixK}{\ensuremath{24}}
\newcommand{\expsixtwoKhighDsixmmax}{\ensuremath{18}}
\newcommand{\expsixtwoKhighDsixmmin}{\ensuremath{5}}
\newcommand{\expsixtwoKhighDsixb}{\ensuremath{4}}
\newcommand{\expsixtwoKhighDsixN}{\ensuremath{4}}
\newcommand{\expsixtwoKhighDsixgamma}{\ensuremath{4}}
\newcommand{\expsixtwoKhighDsixD}{\ensuremath{6}}
\newcommand{\expsixtwoKhighDsixcfT}{\ensuremath{4}}
\newcommand{\expsixtwoKhighDsixcfV}{\ensuremath{4}}
\newcommand{\expsixtwoKhighDsixreltol}{\ensuremath{10^{-7}}}
\newcommand{\expsixtwoKhighDsixTVtol}{\ensuremath{0}}
\newcommand{\expsixtwoKhighDsixB}{\ensuremath{3594}}
\newcommand{\expsixtwoKhighDsixtone}{\ensuremath{-8.0}}
\newcommand{\expsixtwoKhighDsixttwo}{\ensuremath{0.31}}
\newcommand{\expsixtwoKhighDsixtKmone}{\ensuremath{39.84}}
\newcommand{\expsixtwoKhighDsixtK}{\ensuremath{44.41}}
\newcommand{\expsixtwoKhighDsixtmin}{\ensuremath{1.38}}
\newcommand{\expsixtwoKhighDsixtmax}{\ensuremath{162.29}}
\newcommand{\expsixtwoKhighDsixT}{\ensuremath{117.95}}
\newcommand{\expsixtwoKhighDsixkappa}{\ensuremath{10.9096}}
\newcommand{\expsixtwoKhighDsixcalg}{\ensuremath{0.57}}
\newcommand{\expsixtwoKhighDsixupper}{\ensuremath{1.57}}
\newcommand{\expsixtwoKhighDsixlower}{\ensuremath{0.43}}
\newcommand{\expsixtwoKhighDsixlambdaoneIO}{\ensuremath{0.93661026528}}
\newcommand{\expsixtwoKhighDsixlambdaDIO}{\ensuremath{(0.93661026528, 1.0311035355, 2.34898507071, 2.36402837387, 2.76410052757, 3.05335461469)}}
\newcommand{\expsixtwoKhighDsixepsiter}{\ensuremath{0.57}}
\newcommand{\expsixtwoKhighDsixepsnum}{\ensuremath{0.21}}
\newcommand{\expsixtwoKhighDsixsq}{\ensuremath{0.5}}
\newcommand{\expsixtwoKhighDsixlengthS}{\ensuremath{8}}

%% file: numbers/exp63_KhigherD6_data.txt
\newcommand{\expsixthreeKhigherDsixK}{\ensuremath{26}}
\newcommand{\expsixthreeKhigherDsixmmax}{\ensuremath{18}}
\newcommand{\expsixthreeKhigherDsixmmin}{\ensuremath{5}}
\newcommand{\expsixthreeKhigherDsixb}{\ensuremath{4}}
\newcommand{\expsixthreeKhigherDsixN}{\ensuremath{4}}
\newcommand{\expsixthreeKhigherDsixgamma}{\ensuremath{4}}
\newcommand{\expsixthreeKhigherDsixD}{\ensuremath{6}}
\newcommand{\expsixthreeKhigherDsixcfT}{\ensuremath{4}}
\newcommand{\expsixthreeKhigherDsixcfV}{\ensuremath{4}}
\newcommand{\expsixthreeKhigherDsixreltol}{\ensuremath{10^{-7}}}
\newcommand{\expsixthreeKhigherDsixTVtol}{\ensuremath{0}}
\newcommand{\expsixthreeKhigherDsixB}{\ensuremath{3594}}
\newcommand{\expsixthreeKhigherDsixtone}{\ensuremath{-8.0}}
\newcommand{\expsixthreeKhigherDsixttwo}{\ensuremath{0.31}}
\newcommand{\expsixthreeKhigherDsixtKmone}{\ensuremath{47.24}}
\newcommand{\expsixthreeKhigherDsixtK}{\ensuremath{52.12}}
\newcommand{\expsixthreeKhigherDsixtmin}{\ensuremath{1.38}}
\newcommand{\expsixthreeKhigherDsixtmax}{\ensuremath{191.27}}
\newcommand{\expsixthreeKhigherDsixT}{\ensuremath{139.01}}
\newcommand{\expsixthreeKhigherDsixkappa}{\ensuremath{10.9096}}
\newcommand{\expsixthreeKhigherDsixcalg}{\ensuremath{0.57}}
\newcommand{\expsixthreeKhigherDsixupper}{\ensuremath{1.57}}
\newcommand{\expsixthreeKhigherDsixlower}{\ensuremath{0.43}}
\newcommand{\expsixthreeKhigherDsixlambdaoneIO}{\ensuremath{0.9366070462}}
\newcommand{\expsixthreeKhigherDsixlambdaDIO}{\ensuremath{(0.9366070462, 1.03110079255, 2.34898241157, 2.36402576104, 2.7640942154, 3.0533480309)}}
\newcommand{\expsixthreeKhigherDsixepsiter}{\ensuremath{0.57}}
\newcommand{\expsixthreeKhigherDsixepsnum}{\ensuremath{0.21}}
\newcommand{\expsixthreeKhigherDsixsq}{\ensuremath{0.5}}
\newcommand{\expsixthreeKhigherDsixlengthS}{\ensuremath{8}}